\theoremstyle{plain}
\newtheorem{theorem}{Theorem}[section]
\newtheorem{conjecture}[theorem]{Conjecture}
\newtheorem{lemma}[theorem]{Lemma}
\newtheorem{corollary}[theorem]{Corollary}
\newtheorem{claim}[theorem]{Claim}
\newtheorem{problem}[theorem]{Problem}
\theoremstyle{definition}
\newtheorem{definition}[theorem]{Definition}
\theoremstyle{remark}
\newtheorem{example}[theorem]{Example}
\def\U{\mathbb{U}}
\def\R{\mathbb{R}}
\def\C{\mathbb{C}}
\def\I{\mathcal{I}}
\def\J{\mathcal{J}}
\def\Y{\mathcal{Y}}
\title[]{Arrangements Of Minors In The Positive Grassmannian And a Triangulation of The Hypersimplex}
\author{Miriam Farber \and Yelena Mandelshtam}
\address{Department of Mathematics, Massachusetts Institute of Technology,
77 Massachusetts Avenue, Cambridge MA 02139}
\email{mfarber@mit.edu}
\address{Department of Mathematics, Stanford University}
\email{yelena13@stanford.edu}
\thanks{M.F.\ is supported in part by the NSF graduate research fellowship grant 1122374}
\subjclass[2010]{Primary 05E}
\keywords{the positive Grassmannian, sorted sets, triangulations, alcoved polytope, positroid stratification}
\begin{document}

\begin{abstract}

The structure of zero and nonzero minors in the Grassmannian leads to
rich combinatorics of matroids. In this paper, we investigate an even
richer structure of possible equalities and inequalities between the
minors in the positive Grassmannian. It was previously shown that
arrangements of equal minors of largest value are in bijection with
the simplices in a certain triangulation of the hypersimplex that was
studied by Stanley, Sturmfels, Lam and Postnikov. Here we investigate
the entire set of arrangements and its relations with this
triangulation. First, we show that second largest minors correspond to
the facets of the simplices. We then introduce the notion of cubical
distance on the dual graph of the triangulation, and study its
relations with the arrangement of t-th largest minors. Finally, we show that arrangements of largest minors induce a structure of partially ordered sets on the entire collection of minors. We use the Lam and Postnikov circuit triangulation of the hypersimplex to describe a 2-dimensional grid structure of this poset.\\
\end{abstract}

% We relate these arrangements with
%sequences of mutations on {\it plabic graphs.}

%and on more general 2-dimensional surfaces.

\maketitle

\section{Introduction}
In this paper, we study the relations between equalities and inequalities of minors in the positive
Grassmannian and the triangulation of the hypersimplex.
This study is strongly tied to various combinatorial objects such as the positive Grassmannian and its stratification \cite{pos2}, alcoved polytopes\cite{polytopes}, sorted sets and Gr\"obner bases \cite{Stu}, as well as many other objects in algebraic combinatorics and beyond.

The notion of total positivity was originally introduced by Schoenberg \cite{sch} and Gantmacher and Krein \cite{gk} in the 1930s. The classical theory of total positivity deals with totally positive matrices- matrices in which all minors of all orders are positive. Later, the theory was extended by Lusztig in the general Lie theoretic setup through definition of the positive part for a reductive Lie group $G$ and a generalized partial flag manifold $G/P$. In \cite{pos2} it was shown that the space of totally positive matrices can be embedded in the positive Grassmannian, and this embedding unveils symmetries which are hidden on the level of matrices. Thus it is very natural to discuss equalities and inequalities of minors in the more general settings of the positive Grassmannian.

The number and positioning of equal minors in totally positive matrices was studied in
several recent papers. In \cite{FFJM}, it was shown that the number of
equal entries in a totally positive $n\times n$ matrix is $O(n^{4/3})$. The authors
also discussed positioning of equal entries and
obtained relations to the Bruhat order of permutations. In \cite{FRS} it
was shown, using incidences between points and hyperplanes, that the
maximal number of equal $k\times k$ minors in a $k\times n$ totally positive matrix is $O(n^{k-{k\over k+1}})$.

Inequalities between products of two minors in TP matrices have been
widely studied as well \cite{Ska,RS}, and have close ties with Temperley-Lieb
Immanants. Recently there has been also a study of products of
three minors in such matrices \cite{lam}, that related such products with
dimers. Despite all of the above, not much is known about the
inequalities between the minors themselves. What is the full structure
of all the possible equalities and inequalities between minors in TP matrices? The only
part of this problem that has been solved discusses the structure of
the minors with largest value and smallest value \cite{main}, while the
rest of the problem remains open. The description in \cite{main} involves rich combinatorial structure that relates arrangements of smallest minors with triangulations of the $n$-gon and the notion of weakly separated sets, while the structure of largest minors was related to thrackles and sorted collections. In this paper, we discuss the general case, and its tight relation with the triangulation of the hypersimplex.
\section{Background}
For $n\geq k\geq 0$, let the {\it Grassmannian} $Gr(k,n)$ (over $\R$) be the manifold
of $k$-dimensional subspaces $V \subset \R^n$. It can be identified with the space of real $k\times n$ matrices of rank $k$ modulo row operations. Here we
assume that the subspace $V$ associated with a $k\times n$-matrix $A$ is spanned by the row vectors of $A$.
For such a matrix $A$ and a $k$-element subset $I \subset [n]:=\{1,2,3\ldots,n\}$, we denote by $A_I$ the $k\times k$-submatrix of $A$ in the column set $I$, and let $\Delta_I(A):=\det(A_I)$. The coordinates $\Delta_I$ form projective coordinates on the Grassmannian, called
the {\it Pl\"ucker coordinates.} In \cite{pos2}, the {\it positive (nonnegative) Grassmannian\/} $Gr^+(k,n)$ ($Gr^{\geq}(k,n)$) was defined to be the subset of $Gr(k,n)$ whose elements are represented by $k\times n$ matrices $A$ with strictly positive (nonnegative) Pl\"ucker coordinates: $\Delta_I >0$ for all $I$.

We recall two classical stratifications of $Gr(k,n)$ \cite{pos2}. The first one is the cellular decomposition of $Gr(k,n)$ into a disjoint union of Schubert cells. The Grassmannian $Gr(k,n)$ also has a subdivision into matroid strata (or Gelfand-Serganova strata) $S_M$ labelled by matroids $M$:\\
Let $M \subset {[n]\choose k}$, and define
$$S_M:=\{A \in Gr^{\geq }(k,n) | \Delta_I(A)>0 \textrm{ iff } I \in M\}.$$
If $S_M \neq \emptyset$ then $M$ must be a matroid, and in such case $M$ is called positroid and $S_M$ is called a positroid cell. The nonnegative Grassmannian can be decomposed into cells via the positroid stratification $Gr^{\geq}(k,n)=\cup_M S_M$. This decomposition has been studied by Postnikov in \cite{pos2} and was described in terms of various combinatorial objects such as: decorated permutations, plabic graphs, Le-diagrams, Grassmann necklaces, etc. Strictly speaking, positroid cells correspond to arrangements
of zero and positive Pl\"ucker coordinates.\\
The following stratification, which is finer than the positroid stratification, was introduced in \cite{main}.
In this stratification, the strata are defined by all possible equalities and inequalities between
the Pl\"ucker coordinates.

\begin{definition}
Let $\U=(\U_0,\U_1, \dots,\U_l)$ be an ordered set-partition of the set $[n]\choose k$ of all $k$-element subsets in $[n]$.
Let us subdivide the nonnegative Grassmannian $Gr^{\geq}(k,n)$ into the strata $S_\U$ labelled by such ordered set partitions
$\U$ and given by the conditions:
\begin{enumerate}
\item
$\Delta_I = 0$ for $I\in \U_0$,
\item
$\Delta_I = \Delta_J$ if $I,J\in \U_i$,
\item
$\Delta_I < \Delta_J$ if $I\in \U_i$ and $J\in \U_j$ with $i<j$.
\end{enumerate}
An {\it arrangement of minors\/} is an ordered set-partition $\U$ such that the stratum $S_\U$ is not empty.
\end{definition}
The problem bellow was suggested in \cite{main}:
\begin{problem}
Describe combinatorially all possible arrangements of minors in $Gr^{\geq}(k,n)$.
Investigate the geometric and the combinatorial structure of the stratification
$Gr^{\geq}(k,n)=\bigcup S_\U$.
\end{problem}
\begin{example}\label{example1}
Let
$$\U_0= \emptyset, \U_1=\Big\{\{3,4\}\Big\}, \U_2=\Big\{\{1,4\}\Big\}, \U_3=\Big\{\{1,2\},\{2,3\},\{1,3\},\{2,4\}\Big\}.$$
Then $\U=(\U_0,\U_1,\U_2,\U_3)$ is an ordered set partition of $[4]\choose 2$. Consider the matrix $A=\left(
                                                            \begin{array}{cccc}
                                                              1 & 2 & 1 & 1/3 \\
                                                              1 & 3 & 2 & 1 \\
                                                            \end{array}
                                                          \right)$, which satisfies $$\Delta_{34}=1/3, \Delta_{14}= 2/3, \Delta_{12}=\Delta_{23}=\Delta_{13}=\Delta_{24}=1.$$ Therefore $S_\U$ is nonempty since $A \in S_\U$, and hence $\U$ is an arrangement of minors.
\end{example}
For the case $k=1$, the stratification of $Gr^{\geq}(k,n)$ into the strata $S_\U$ is equivalent to \emph{Coxeter arrangement} of type A (also known as \emph{braid arrangement}). The classification of the possible options for $U_0$ is equivalent to the positroid stratification described above. In this work we deal with the positive Grassmannian, and thus restrict ourself to the case $\U_0 = \emptyset$. We extend the convention from \cite{main}:
\begin{definition}
We say that a subset $\mathcal{J}\subset{[n]\choose k}$ is
an {\it arrangement of $t^{th}$ largest (smallest) minors\/} in $Gr^{+}(k,n)$, if there exists a nonempty stratum $S_{\U}$ such that $\U_0=\emptyset$ and $\U_{l-t+1}=\mathcal{J}$ ($\U_{t}=\mathcal{J}$).

If $t=1$ we say that such arrangement is the arrangement of largest (smallest) minors.
\end{definition}
Arrangements of largest and smallest minors were studied in
\cite{main}, where it was shown that they enjoy a rich combinatorial
structure. Arrangements of smallest minors are related to weakly
separated sets. Such sets were originally introduced by Leclerc-Zelevinsky \cite{LZ} in the study of quasi-commuting quantum minors, and are closely related to the associated {\it cluster algebra} of the positive Grassmannian. Arrangement of largest minors were shown to be in bijection with simplices of Sturmfels' triangulation of the hypersimplex, which also appear in the context of Gr\"obner bases \cite{polytopes}.
In this paper, we are interested in the combinatorial description of  arrangements of $t^{th}$ largest minors for $t \geq 2$. For a stratum $S_{\U}$, the structure of $\U_t$ for $t<l$ depends on the structure of $\U_l$, as we will show later.
\begin{definition}
Let $\mathcal{J}\subset{[n]\choose k}$ be an arrangement of largest minors. We say that $\mathcal{Y}\subset{[n]\choose k}$ is a $(t,\mathcal{J})-$largest arrangement ($t \geq 2$) if
there exists a nonempty stratum $S_{\U}$ such that $\U_0=\emptyset$, $\U_{l}=\mathcal{J}$ and $\U_{l-t+1}=\mathcal{Y}$.

We say that $W \in {[n]\choose k}$ is a $(t,\mathcal{J})-$largest minor if there exists a $(t,\mathcal{J})-$largest arrangement $\mathcal{Y}$ such that $W \in \mathcal{Y}$
\end{definition}

In particular, if $\mathcal{Y}\subset{[n]\choose k}$ is a $(t,\mathcal{J})-$largest arrangement, then $\mathcal{Y}$ is also an arrangement of $t^{th}$ largest minors. Example~\ref{example1} implies that $\Big\{\{3,4\}\Big\}$ is a $(3,\Big\{\{1,2\},\{2,3\},\{1,3\},\{2,4\}\Big\})-$largest arrangement, and that $\{1,4\}$ is a\\ $(2,\Big\{\{1,2\},\{2,3\},\{1,3\},\{2,4\}\Big\})-$largest minor.

\section{The Triangulation of the Hypersimplex}
\label{sec:Triangulations}
\begin{definition}
The hypersimplex $\Delta_{k,n}$ is an $(n-1)$-dimensional polytope defined as follows:
\begin{center}
$\Delta_{k,n}=\{(x_1,\ldots,x_n)\textrm{ }|\textrm{ }0 \leq x_1 ,\ldots,x_n \leq 1;
x_1+x_2+\ldots+x_n = k\}$.
\end{center}
%that is, the number of permutations $w$ of size $n-1$ with exactly $k-1$ descents
\end{definition}
Laplace showed that the normalized volume of $\Delta_{k,n}$
equals the Eulerian number $A(n-1,k-1)$, that is, the number of permutations $w$ of size $n-1$ with exactly $k-1$ descents. A bijective proof of this property was given by Stanley in \cite{Sta1}. In \cite{polytopes} four different constructions of a triangulation of the hypersimplex into $A(n-1,k-1)$ unit simplices
are presented: Stanley's triangulation \cite{Sta1}, Alcove triangulation, circuit triangulation and Sturmfels'
triangulation \cite{Stu}. It was shown in \cite{polytopes} that
these four triangulations coincide. We now describe Sturmfels' construction
following the notations of \cite{polytopes}. Afterwards we describe
the circuit triangulation as it appears in \cite{polytopes}.
\subsection{Sturmfels' construction}
\begin{definition}
For a multiset $S$ of elements from $[n]$, let $Sort(S)$ be the non-decreasing sequence obtained by
ordering the elements of $S$. Let $I,J \subset {[n]\choose k}$ and let
$Sort(I \cup J)=(a_1,a_2,\ldots,a_{2k})$. Define
\begin{center}
$Sort_1(I,J):=\{a_1,a_3,\ldots,a_{2k-1}\}$, $Sort_2(I,J):=\{a_2,a_4,\ldots,a_{2k}\}$.
\end{center}
A pair $\{I,J\}$ is called {\it
sorted\/} if $Sort_1(I, J)=I$ and $Sort_2(I, J)=J$, or vice versa.
\end{definition}
For example, $\{1,3,5\}, \{2,4,6\}$ are sorted, while $\{1,4,5\}, \{2,3,6\}$ are not sorted.
We would like to mention a useful property of sortedness, which follows from Skandera inequalities \cite{Ska} (see also Theorem 6.3 in \cite{main}).
\begin{corollary}\label{cor:Skandera}
Let $I,J\in{[n]\choose k}$ be a pair which is not sorted. Then\\ $ \Delta_{sort_1(I,J)} \, \Delta_{sort_2(I, J)} > \Delta_I \Delta_J$ for points of the positive Grassmannian $Gr^+(k,n)$.
\end{corollary}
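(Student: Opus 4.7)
The plan is to upgrade the classical weak form of Skandera's inequality to a strict inequality by exploiting the strict positivity of Pl\"ucker coordinates on $Gr^+(k,n)$. Skandera's quadratic inequality asserts that on $Gr^{\geq}(k,n)$ one has
\[
\Delta_{\sort_1(I,J)}\,\Delta_{\sort_2(I,J)} \;\geq\; \Delta_I\,\Delta_J
\]
for every pair $I,J\in{[n]\choose k}$, with equality whenever $\{I,J\}$ is already sorted. The corollary asks for strict inequality once both (i) the pair is unsorted and (ii) we restrict to the strictly positive locus, so the entire work is in upgrading "$\geq$" to "$>$" under these two hypotheses.

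The step I would lean on is the Rhoades--Skandera monomial-positive expansion of the difference
\[
\Delta_{\sort_1(I,J)}\,\Delta_{\sort_2(I,J)} - \Delta_I\,\Delta_J
\]
as a nonnegative integer combination of products of Pl\"ucker coordinates (essentially an expansion in Temperley--Lieb immanants, which is the engine behind Skandera's inequality). When $\{I,J\}$ is sorted the expansion is empty; when $\{I,J\}$ is not sorted, at least one summand appears with a strictly positive coefficient. Since every $\Delta_L$ is strictly positive on $Gr^+(k,n)$, every surviving summand is strictly positive there, so the left-hand side strictly exceeds the right-hand side. The cleanest written proof is then just to invoke Theorem 6.3 of \cite{main}, which is precisely this statement and is proved along these lines.

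The main obstacle I foresee is the combinatorial nonvanishing claim inside the monomial-positive expansion: namely, verifying that an unsorted pair $\{I,J\}$ really does contribute at least one positive coefficient. This is the only genuinely combinatorial piece of the argument; everything else is formal. As a sanity check one can verify the smallest nontrivial instance $I=\{1,4\}$, $J=\{2,3\}$ directly, where $\sort_1(I,J)=\{1,3\}$ and $\sort_2(I,J)=\{2,4\}$, and the three-term Pl\"ucker relation on $Gr(2,4)$ gives
\[
\Delta_{13}\Delta_{24} - \Delta_{14}\Delta_{23} \;=\; \Delta_{12}\Delta_{34},
\]
a single strictly positive monomial on $Gr^+(2,4)$. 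This illustrates, in the simplest case, exactly the mechanism that the general argument needs to produce.
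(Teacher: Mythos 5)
Your proposal matches the paper's approach: the paper treats this corollary as an immediate consequence of Skandera's inequalities, citing Theorem~6.3 of \cite{main}, and the Temperley--Lieb immanant (Rhoades--Skandera) monomial-positive expansion you describe is exactly the mechanism underlying that reference. The $Gr(2,4)$ sanity check via the three-term Pl\"ucker relation is a correct illustration of the same phenomenon.
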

\begin{definition}
A collection $\I=\{I_1,I_2,\ldots,I_r\}$ of
elements in $[n]\choose k$ is called sorted if $I_i,I_j$ are sorted, for
any pair $1 \leq i < j \leq n$. Equivalently, if $I_i=\{{a_1}^i<{a_2}^i<\ldots<{a_k}^i\}$ for all $i$
then $\I$ is sorted if (after possible reordering of the $I_i$'s) we have
$${a_1}^1 \leq {a_1}^2 \leq \ldots \leq {a_1}^k \leq {a_2}^1 \leq {a_2}^2 \leq \ldots {a_2}^k \leq \ldots \leq {a_r}^1\leq {a_r}^2 \ldots \leq {a_r}^k$$
\end{definition}
Given $I \in {[n]\choose k}$, let $\epsilon_I$ be a 0,1-vector
$\epsilon_I=(\epsilon_1,\epsilon_2,\ldots,\epsilon_n)$ such that
$\epsilon_i=1$ iff $i\in I$, and otherwise $\epsilon_i=0$. In some cases we will use $I$ instead of $\epsilon_I$ (if it is clear from the context). For a sorted collection $\I$, we denote by
$\nabla_\I$ the $(r-1)$-dimensional simplex with the vertices
$\epsilon_{I_1},\ldots,\epsilon_{I_r}$.
\begin{theorem}\cite{Stu}\label{2.1}
The collection of simplices $\nabla_\I$ where $\I$ varies over
all sorted collections of $k$-element subsets in $[n]$ , is a simplicial
complex that forms a triangulation of the hypersimplex $\Delta_{k,n}$.
\end{theorem}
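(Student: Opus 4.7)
The plan is to follow the Gr\"obner-theoretic route of \cite{Stu}. Let $I_{k,n}\subset\mathbb{R}[x_I : I\in\binom{[n]}{k}]$ be the toric ideal defined as the kernel of the ring map $x_I\mapsto \mathbf{t}^{\epsilon_I}$. By Sturmfels' general correspondence between squarefree initial ideals of a toric ideal and regular unimodular triangulations of the underlying lattice polytope, it suffices to exhibit a term order $\prec$ under which
\[
\mathrm{in}_\prec(I_{k,n}) \;=\; \bigl\langle\, x_I x_J : \{I,J\}\text{ is not sorted}\,\bigr\rangle,
\]
since the Stanley--Reisner complex of the right-hand side is precisely the simplicial complex whose faces are the sorted collections. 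The simplices $\nabla_\I$ then emerge as the top-dimensional faces of this triangulation, each unimodular and of normalized volume $1$.

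First I would verify that every sorting relation
\[
x_I x_J - x_{\mathrm{sort}_1(I,J)}\, x_{\mathrm{sort}_2(I,J)}
\]
lies in $I_{k,n}$, which is immediate from $\epsilon_I+\epsilon_J=\epsilon_{\mathrm{sort}_1(I,J)}+\epsilon_{\mathrm{sort}_2(I,J)}$. Next I would install a weight vector $\omega:\binom{[n]}{k}\to\mathbb{R}$ for which $\omega_I+\omega_J > \omega_{\mathrm{sort}_1(I,J)}+\omega_{\mathrm{sort}_2(I,J)}$ whenever $\{I,J\}$ is unsorted; such a weight exists by a convexity argument, since sorting strictly ``equalizes'' the two subsets in the sense of reducing the spread between the ranks of corresponding elements in the sorted multiset $I\cup J$. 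Refining $\omega$ by any tie-breaking term order ensures that $x_I x_J$ is the leading monomial of each sorting binomial for unsorted $\{I,J\}$, giving one containment of initial ideals.

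The reverse containment is the main obstacle. It amounts to showing that the sorting binomials form a Gr\"obner basis of $I_{k,n}$, which by Buchberger's criterion reduces to checking that every S-polynomial of two such binomials reduces to zero modulo them. Combinatorially, this is a confluence statement for the rewriting system ``replace any unsorted pair of indices appearing in a monomial $x_{I_1}\cdots x_{I_r}$ by its sorted version.'' Termination follows from the weight argument, while confluence requires a diamond-lemma analysis of overlaps — specifically triples $(I,J,K)$ in which $\{I,J\}$ and $\{J,K\}$ are both unsorted — verifying that both possible one-step rewrites reach a common normal form under further sorting. Once this is established, $\mathrm{in}_\prec(I_{k,n})$ is exactly the claimed monomial ideal and the theorem follows from Sturmfels' dictionary.

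As an independent cross-check, the number of maximal sorted collections is known to equal the Eulerian number $A(n-1,k-1)$, matching the normalized volume of $\Delta_{k,n}$ computed by Laplace; together with the unimodularity of each $\nabla_\I$ (whose vertex matrix can be brought to triangular form via the staircase ordering of a sorted collection), this confirms the complex covers the hypersimplex without overlap and provides a consistency check on the Gr\"obner argument above.
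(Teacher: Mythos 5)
The paper does not prove Theorem~\ref{2.1}; it is quoted from Sturmfels~\cite{Stu} (note the citation in the theorem header), so there is no in-paper argument to compare against. Your sketch does follow the route that \cite{Stu} actually takes: pass to the toric ideal $I_{k,n}$, identify the target squarefree monomial ideal with the Stanley--Reisner ideal of the sorted complex, and invoke the dictionary between squarefree initial ideals and regular unimodular triangulations. The framework is the right one.

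Where the proposal falls short of a proof is exactly where you flag uncertainty, and those are not side remarks but the entire content of the theorem. The existence of a weight $\omega$ with $\omega_I+\omega_J>\omega_{\sort_1(I,J)}+\omega_{\sort_2(I,J)}$ for every unsorted pair is precisely regularity of the triangulation; ``a convexity argument'' does not produce one, and in \cite{Stu} an explicit weight is written down and checked. More importantly, the reverse containment of initial ideals is the heart of the matter, and saying ``Buchberger plus a diamond-lemma analysis of overlaps'' only renames what must be shown. The argument in \cite{Stu} sidesteps S-polynomial casework by exhibiting a canonical normal form: for any monomial $x_{I_1}\cdots x_{I_r}$, sort the concatenated multiset $I_1\cup\cdots\cup I_r$ into $a_1\le\cdots\le a_{rk}$ and form the $r$ blocks $\{a_i,a_{i+r},a_{i+2r},\ldots\}$; pairwise sorting always terminates at this fixed point, which gives confluence for free, and the squarefree normal forms are exactly the sorted collections. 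Without some version of this uniqueness argument, your rewriting-system paragraph is an outline of a plan rather than a proof. The final Eulerian-number paragraph is, as you acknowledge, only a consistency check: matching the facet count to the normalized volume cannot by itself certify that the simplices tile $\Delta_{k,n}$ without overlaps or gaps.
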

From Theorem \ref{2.1}, it follows that the maximal by inclusion sorted collections correspond to the maximal simplices in the triangulation, and they are known to be of size $n$.

As an example, consider the case $k=2$. Let $I=\{a,b\},J=\{c,d\} \subset {[n]\choose 2}$ be a pair of sorted sets ($I \neq J$). Consider the graph $G$ of order $n$ whose vertices lie in clockwise order on a circle. Then we can think about $I$ and $J$ as edges in the graph, and since $I$ and $J$ are sorted, these two edges either share a common vertex or cross each other.
\begin{definition}
A {\it thrackle} is a graph in which every pair of edges is either crossing or shares a common vertex.
\footnote{Our thrackles are a special case of Conway's thrackles.
The latter are not required to have vertices arranged on a circle.}
\end{definition}
The maximal number of edges in a thrackle is $n$, and each such maximal thrackle corresponds to a maximal sorted set with $k=2$. Figure \ref{fig:more_thrackles} describes all the thrackles of order up to 5.
%Let $n=2r+1\geq 3$. The {\it $n$-star\/} is the graph that obtained by placing the vertices $1,2, \ldots n$ in %clockwise order on a circle, and then connecting vertex $i$
%by edges to vertices $i+r$ and $i+r+1$ modulo $n$.
%We call such graphs {\it odd stars.} Clearly, odd stars are thrackles. Other thrackles are obtained from odd star by %attaching some leaves to its vertices. One can attach any number of leaves to  any vertex $i$ provided that those %leaves lie between vertices $i+r$ and $i+r+1$. After this process, we renumber the vertices in clockwise order from 1 %up to the number of vertices
\begin{figure}[h]
\hspace{0.7in}
\includegraphics[height=0.6in,width=.6in]{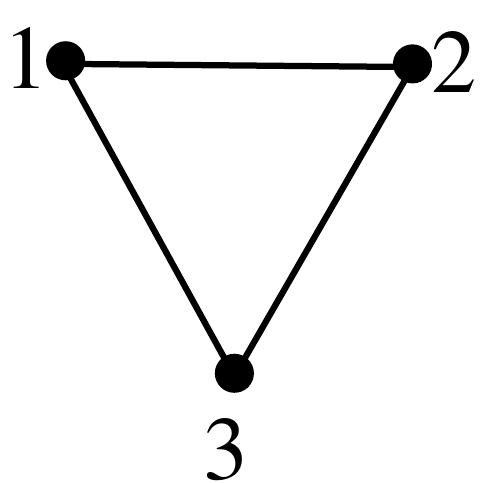}%
\qquad\quad
\includegraphics[height=0.6in,width=.6in]{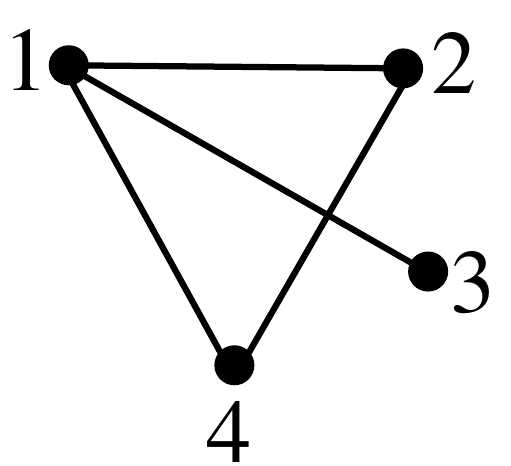}%
\qquad
\includegraphics[height=0.6in,width=.6in]{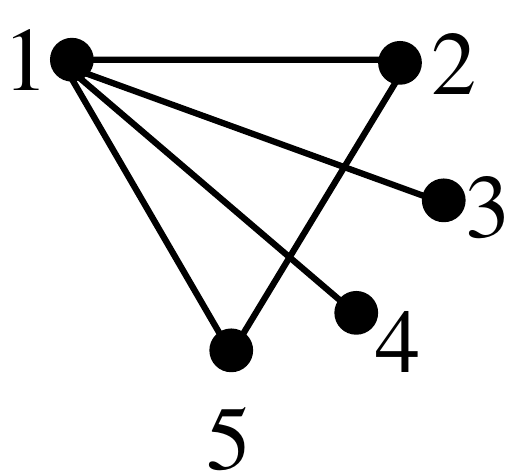}%
\qquad
\includegraphics[height=0.6in,width=.6in]{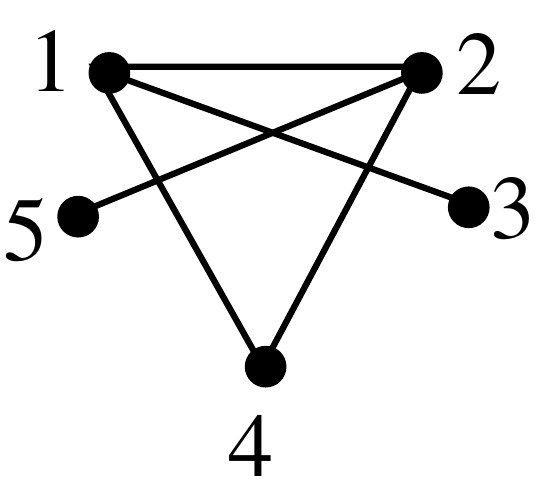}%
\qquad
\includegraphics[height=0.6in,width=.6in]{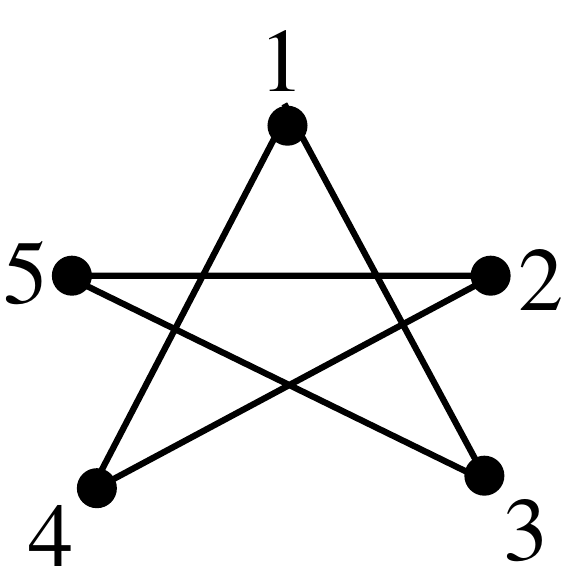}%
%\qquad\quad
%\includegraphics[height=0.6in,width=.6in]{maxthrack6.pdf}%
%\qquad
%\includegraphics[height=0.6in,width=.6in]{maxthrack7.pdf}%
%\qquad
%\includegraphics[height=0.6in,width=.6in]{maxthrack8.pdf}%
%\qquad
%\includegraphics[height=0.6in,width=.6in]{maxthrack9.pdf}%
%\qquad\quad
\caption{All maximal thrackles that have at most 5 vertices (up to rotations and reflections).}
\label{fig:more_thrackles}
\end{figure}
\begin{definition}
The \emph{dual graph} $\Gamma_{(k,n)}$ of Sturmfels' triangulation of $\Delta_{k,n}$ is the graph whose
vertices are the maximal simplices, and two maximal simplices are adjacent by an edge if they share a common facet.
\end{definition}

Figure~\ref{graphGamma2n} depicts the graph $\Gamma_{(2,6)}$. This graph has $A(5,1)=26$ vertices, each corresponds to a maximal thrackle on 6 vertices. We also described explicitly 6 of the vertices. In particular, vertices $a$ and $b$ are connected since $b$ can be obtained from $a$ by removing the edge $\{1,6\}$ and adding instead the edge $\{2,5\}$. Therefore $\nabla_{a}$ and $\nabla_{b}$ share a common facet.
\begin{figure}[h]
\centering
\includegraphics[height=2.5in,width=3.8in]{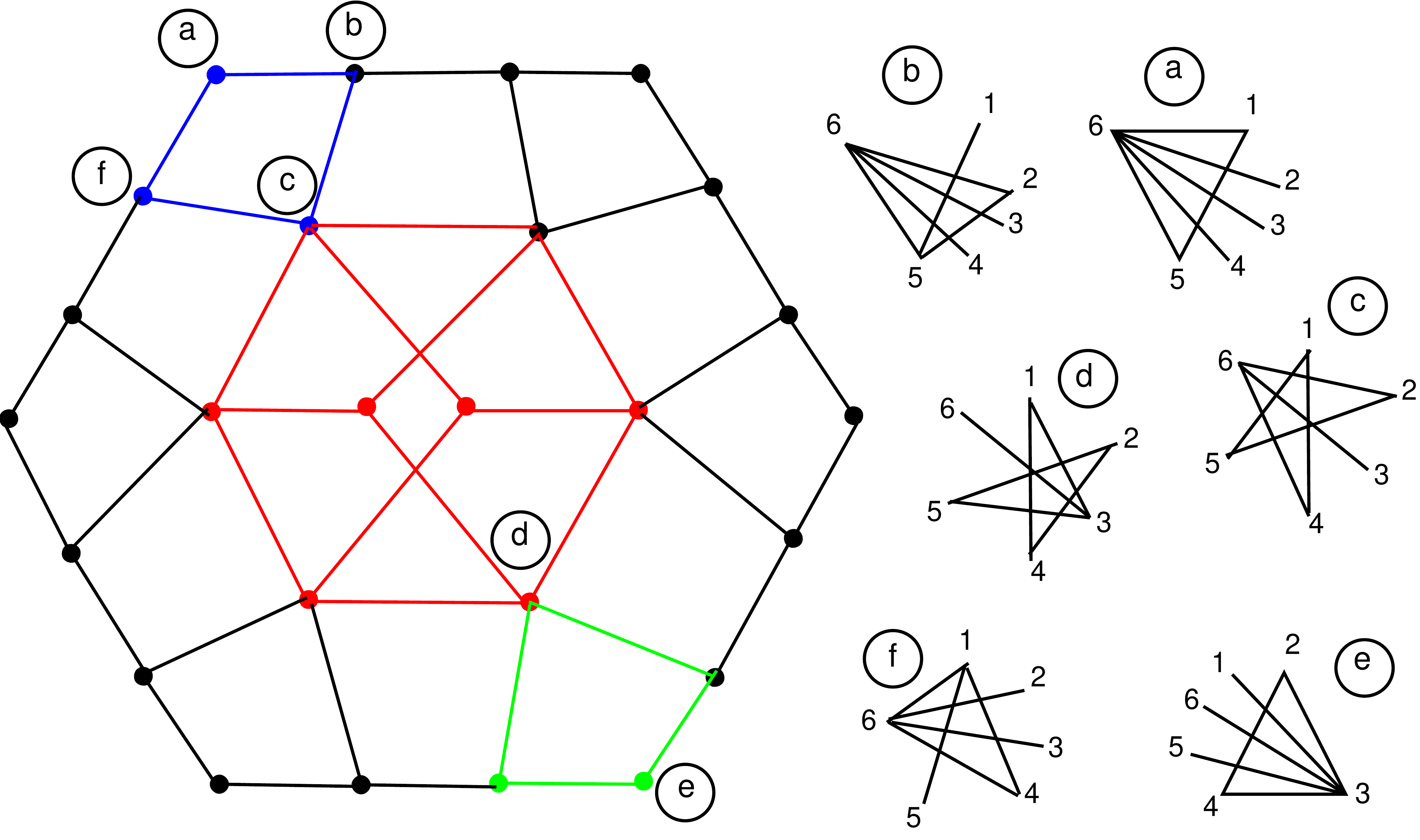}
\caption{The graph $\Gamma_{(2,6)}$}
\label{graphGamma2n}
\end{figure}

%Finally, the following is a generalization of Proposition describes the structure of edges in $\Gamma_{(k,n)}$ for %general $k$
%\begin{proposition}\label{gamma2nadjacency}
%Two vertices of $\Gamma_{(2,n)}$ correspond to two maximal thrackles $G$ and $G'$.
%$G$ and $G'$ are adjacent by an edge if and only if there exist
%four distinct vertices labelled $a, a + 1, b, b + 1 mod n$ such that $G$ contains the edges
%$(a, b),(a - 1, b),(b + 1, a)$ and $G'$
%is obtained from $G$ by changing the edge $(a, b)$ to
%$(a - 1, b + 1)$. see Figure~\ref{propositiongamma2n}.
%\end{proposition}
\subsection{Circuit triangulation}
We start by defining the graphs $G_{k,n}$ and circuits in these graphs. These definitions are taken from \cite{polytopes}.
\begin{definition}
We define $G_{k,n}$ to be the directed graph whose vertices are
$\{\epsilon_I\}_{I \in {[n]\choose k}}$, and two vertices
$\epsilon=(\epsilon_1,\epsilon_2,\ldots,\epsilon_n)$ and $\epsilon'$ are
connected by an edge oriented from $\epsilon$ to $\epsilon'$ if there
exists some $i \in [n]$ such that $(\epsilon_{i},\epsilon_{i+1})=(1,0)$
and the vector $\epsilon'$ is obtained from $\epsilon$ by switching
$\epsilon_{i},\epsilon_{i+1}$ (and leaving all the other coordinates
unchanged, so the 1 is ``shifted'' one place to the right). We give
such an edge the label, $i$. When
considering $i \in [n]$ we refer to $i$ as $i \textrm{ mod } n$, and thus if
$i=n$, we have $i+1=1$.
\end{definition}
A circuit in $G_{k,n}$ of minimal possible length must be of length $n$, and is given by a sequence
of shifts of ``1''s: The first ``1'' in $\epsilon$ moves to the position of the second ``1'',
the second ``1'' moves to the position of the third ``1'', and so on, finally, the last
``1'' cyclically moves to the position of the first ``1''. Figure~\ref{circuit38} is an example of a minimal circuit in $G_{3,8}$.
For convenience, we label the vertices by $I$ instead of $\epsilon_I$.
The sequence of labels of edges in a minimal circuit forms a permutation
$\omega=\omega_1\omega_2\ldots \omega_n \in S_n$, and two permutations
that are obtained from each other by
cyclic shifts correspond to the same circuit. Thus, we can label each minimal circuit in $G_{k,n}$ by its
permutation modulo cyclic shifts. For example, the permutation corresponding to the minimal circuit in
Figure~\ref{circuit38} is $\omega= 56178243$, and we label this circuit $C_\omega$.
%\begin{comment}
\begin{figure}[h]
\centering
\includegraphics[height=1.3in]{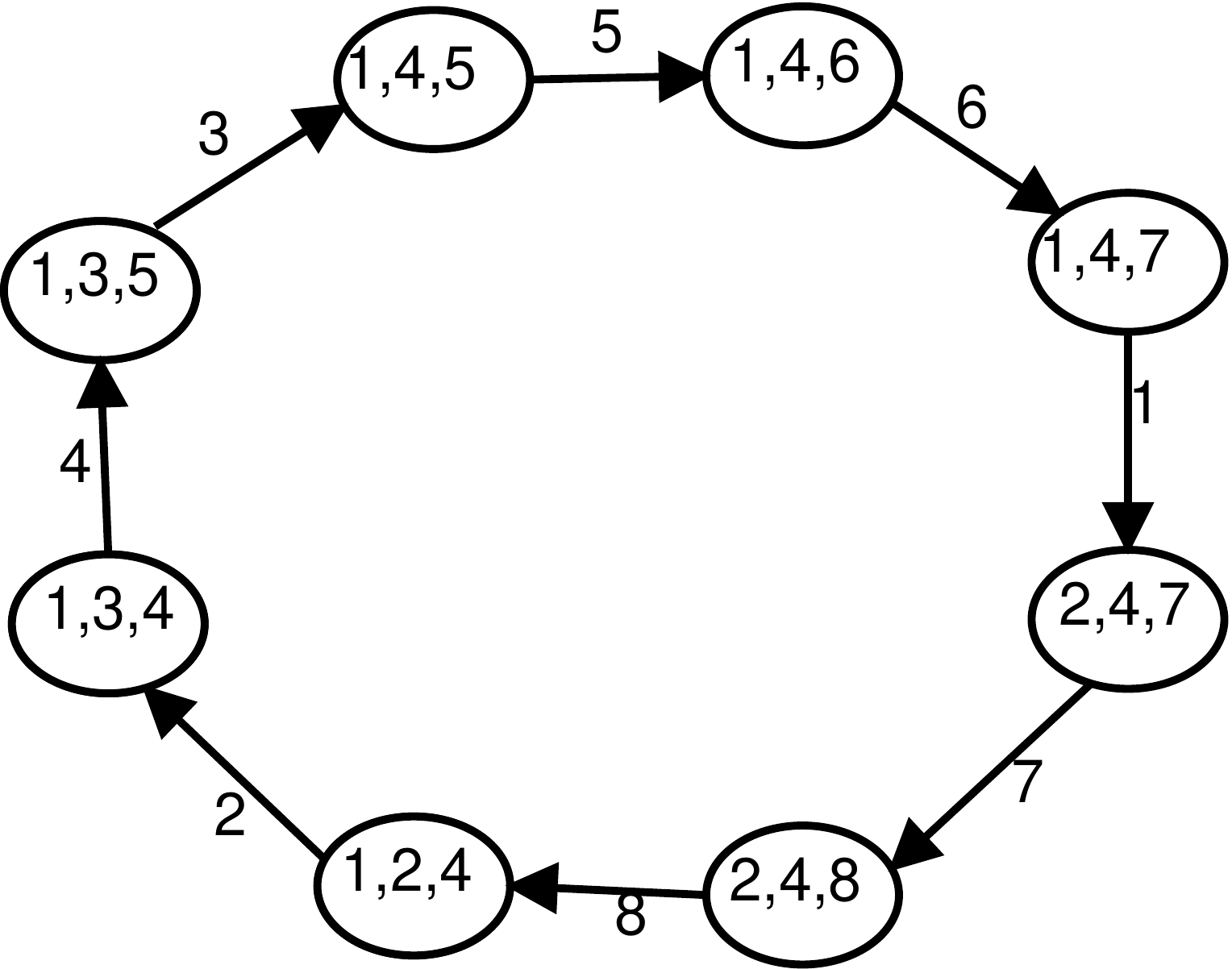}%
%\qquad\quad
\caption{A minimal circuit in $G_{3,8}$}
\label{circuit38}
\end{figure}
%\end{comment}
Circuit triangulation is described in the following theorem.
\begin{theorem}\cite{polytopes}
Each minimal circuit $C_\omega$ in $G_{k,n}$ determines the simplex $\Delta_\omega$ inside the hypersimplex $\Delta_{k,n}$ with the vertex set $C_\omega$. The collection of simplices $\Delta_\omega$ corresponding to all minimal circuits in $G_{k,n}$ forms a triangulation of the hypersimplex, which is called the circuit triangulation. The vertices of $C_\omega$ form a maximal sorted collection, and every maximal sorted collection can be realized via a minimal circuit in the graph $G_{k,n}$.
\label{circuitthm}
\end{theorem}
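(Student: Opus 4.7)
The plan is to exploit Sturmfels' triangulation, Theorem \ref{2.1}, which already provides a triangulation of $\Delta_{k,n}$ whose maximal simplices are indexed by maximal sorted collections in ${[n]\choose k}$, each of size $n$. Granted this, the theorem reduces to producing a bijection between minimal circuits in $G_{k,n}$, taken modulo cyclic shift of edge labels, and maximal sorted collections, such that the vertex set of a circuit coincides with the associated sorted collection. If such a bijection is established, then each $\Delta_\omega$ agrees with the corresponding Sturmfels simplex $\nabla_\I$, and the circuit triangulation inherits the triangulation property from Sturmfels' construction.

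The first main step is to verify that the vertex set of every minimal circuit $C_\omega$ is a sorted collection. Writing the circuit as $\epsilon_{I_0} \to \epsilon_{I_1} \to \cdots \to \epsilon_{I_{n-1}} \to \epsilon_{I_0}$, each consecutive pair $(I_j, I_{j+1})$ differs by a single unit shift of some $1$, and it is immediate from the $\mathrm{Sort}_1, \mathrm{Sort}_2$ definition that such a pair is sorted. The harder task is to establish sortedness of pairs $(I_a, I_b)$ that are far apart in the cycle. Here the key observation is that, in a minimal circuit, the $i$-th $1$ migrates in a single contiguous cyclic arc from the initial position of the $i$-th $1$ to the initial position of the $(i+1)$-th $1$; hence at every intermediate time the $k$ ones occupy positions that interleave in a rigidly controlled way. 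A direct analysis of how $\mathrm{Sort}(I_a \cup I_b)$ splits under this interleaving shows that $\mathrm{Sort}_1(I_a, I_b) = I_a$ and $\mathrm{Sort}_2(I_a, I_b) = I_b$, up to exchanging the roles of $I_a$ and $I_b$, so all pairs of vertices in the circuit are sorted.

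For the converse direction, I would show that any maximal sorted collection $\I = \{I_1, \ldots, I_n\}$ can be realized as the vertex set of a minimal circuit. Using the explicit interleaving characterization of maximal sortedness stated in Section \ref{sec:Triangulations}, one can arrange the elements of $\I$ in a cyclic sequence so that consecutive subsets differ by exactly one unit shift; this reads off a closed walk of edges in $G_{k,n}$ that is a minimal circuit. To confirm that this assignment is indeed a bijection, I would match cardinalities: Theorem \ref{2.1} yields $A(n-1,k-1)$ maximal sorted collections, and an independent enumeration of permutations of $[n]$ modulo cyclic shift constrained to have the descent count compatible with $k$ also yields $A(n-1,k-1)$, so the two constructions are inverse to each other.

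The main obstacle is the global sortedness check for all pairs of vertices inside a single minimal circuit: a claim that is trivial at the level of a single edge but must be propagated consistently across the entire cycle. Organizing the case analysis cleanly requires tracking simultaneously the trajectories of the $k$ individual ones, together with the Eulerian-style count that closes up the bijection. Once the bijection with maximal sorted collections is in place, the remaining assertions of the theorem, namely that the simplex spanned by $C_\omega$ coincides with the Sturmfels simplex and that the $\Delta_\omega$'s tile $\Delta_{k,n}$, follow immediately from Theorem \ref{2.1}.
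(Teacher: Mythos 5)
The paper does not prove Theorem~\ref{circuitthm}; it is stated as a citation to \cite{polytopes}, so there is no internal argument to compare against. Your global strategy---reduce via Sturmfels' Theorem~\ref{2.1} to the claim that minimal circuits and maximal sorted collections of size $n$ are in vertex-set-preserving bijection---is the right one, but the two load-bearing steps are asserted rather than proven, and the phrasing of the first one conceals where the real difficulty lies.

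You claim that a direct analysis of $Sort(I_a\cup I_b)$ shows the pair is sorted because the $k$ ones ``interleave in a rigidly controlled way.'' This is not direct: at any time the ones sit one per cyclic arc $[p_i,p_{i+1}]$, but the arc $[p_k,p_1]$ wraps past $n$ to $1$, and once the $k$-th traveling one crosses the seam, writing $I_a$ in increasing linear order permutes which traveling one occupies which ordinal slot, so $Sort(I_a\cup I_b)$ no longer pairs the $i$-th traveling one of $I_a$ with that of $I_b$ in any transparent way. The clean route bypasses the $Sort$ definition entirely and uses the equivalent cardinality criterion: $I$ and $J$ are sorted iff $\bigl|\,|I\cap\{i,\dots,j\}|-|J\cap\{i,\dots,j\}|\,\bigr|\le 1$ for all $1\le i\le j\le n$ (exactly the criterion the paper itself invokes after Definition~\ref{defdisten}). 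Over one traversal of a minimal circuit, each directed cyclic edge $j\to j+1$ of $[n]$ is used exactly once, so $t\mapsto|I_t\cap\{a,\dots,b\}|$ increments once and decrements once per cycle, hence takes at most two consecutive values, giving the needed bound. You should replace the vague interleaving claim with that argument. The remaining gaps are (i) the converse---reading a minimal circuit off a maximal sorted collection also needs a real argument, since the canonical linear interleaving does not obviously produce consecutive pairs that differ by a single unit cyclic shift, and one must verify the resulting closed walk has length $n$---and (ii) equal cardinalities alone do not make two constructions mutually inverse; you also need injectivity of at least one direction, which you have not established.
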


Circuit triangulation proves to be a useful tool when studying adjacency of maximal simplices in the hypersimplex, and understanding the structure of $\Gamma_{(k,n)}$. In particular, the following theorem implies that the maximal degree of a vertex in $\Gamma_{(k,n)}$ is at most $n$.
\begin{theorem}\cite{polytopes}
Let $\mathcal{I} = \{I_1,I_2,\ldots,I_n\}$ be a sorted subset corresponding to the maximal
simplex $\nabla_\I$ of $\Gamma_{(k,n)}$. Let $t \in [n]$ and $I_t = \{i_1,i_2,\ldots,i_k\}$. Then we can replace
$I_t$ in $\I$ by another $I'_t \in {[n] \choose k}$ to obtain an adjacent maximal simplex $\nabla_{\I'}$ if and
only if the following holds: We must have $I'_t = \{i_1,\ldots,i'_a,\ldots,i'_b,\ldots,i_k\}$ for some $a \neq b \in [n]$ and $i'_a \neq i'_b$, $i_a-i'_a=i'_b-i_b=\pm 1 (mod \textrm{ } n)$ and also both
$k$-subsets $I_{c}=\{i_1,\ldots,i'_a,\ldots,i_b,\ldots,i_k\}$ and $I_{d}=\{i_1,\ldots,i_a,\ldots,i'_b,\ldots,i_k\}$ must lie in $\I$.
\label{thm:adjacency}
\end{theorem}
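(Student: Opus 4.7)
The plan is to use the circuit triangulation (Theorem \ref{circuitthm}) to translate adjacency of maximal simplices in $\Gamma_{(k,n)}$ into a local exchange of two commuting shifts in a minimal circuit of $G_{k,n}$. Represent $\nabla_{\I}$ by its minimal circuit $C_\omega : I_1 \to I_2 \to \cdots \to I_n \to I_1$, whose edge labels form a permutation $\omega = \omega_1 \cdots \omega_n$ modulo cyclic rotation. An adjacent maximal simplex $\nabla_{\I'}$ shares a facet with $\nabla_{\I}$, so $\I' = (\I \setminus \{I_t\}) \cup \{I'_t\}$ for some $t$ and some $I'_t \neq I_t$, and Theorem \ref{circuitthm} supplies its own minimal circuit $C_{\omega'}$ on $\I'$.

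The first step, which is also the principal obstacle, is to show that $C_\omega$ and $C_{\omega'}$ agree on every edge not incident to $I_t$ or $I'_t$, so that $C_{\omega'}$ traverses the path $I_{t-1} \to I'_t \to I_{t+1}$. This follows from the fact that the cyclic ordering of vertices in a minimal circuit is rigidly determined by the sorted interleaving of their ordered entries: consecutive entries of this sorted sequence lie in consecutive vertices of the cycle. Since $\I$ and $\I'$ share $n-1$ vertices with the same sorted data on those vertices, $I'_t$ must occupy the same cyclic slot as $I_t$, so its neighbors in $C_{\omega'}$ are forced to be $I_{t-1}$ and $I_{t+1}$.

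Now set $a = \omega_{t-1}$ and $b = \omega_t$, so $I_t = I_{t-1} + \{a+1\} - \{a\}$ and $I_{t+1} = I_t + \{b+1\} - \{b\}$ with indices mod $n$. A second intermediate $I'_t \neq I_t$ making $I_{t-1} \to I'_t \to I_{t+1}$ a directed path in $G_{k,n}$ exists if and only if the two shifts commute, i.e.\ $\{a,a+1\} \cap \{b,b+1\} = \emptyset \pmod n$; if these sets overlap, the shift at position $b$ cannot be applied to $I_{t-1}$ before the shift at position $a$, forcing $I'_t = I_t$. When the shifts commute, performing shift $b$ before shift $a$ starting from $I_{t-1}$ yields the unique alternative
\[
I'_t = I_{t-1} + \{b+1\} - \{b\} = I_t + \{a,\, b+1\} - \{a+1,\, b\}.
\]

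Writing $I_t = \{i_1 < \cdots < i_k\}$ and letting $i_a = a+1$, $i_b = b$ be the two affected entries, we obtain $i'_a = a = i_a - 1$ and $i'_b = b+1 = i_b + 1$, so $i_a - i'_a = i'_b - i_b = 1$; the $-1$ case arises by reversing the orientation of $C_\omega$. Moreover $I_c = I_t - \{i_a\} + \{i'_a\} = I_{t-1}$ and $I_d = I_t - \{i_b\} + \{i'_b\} = I_{t+1}$ automatically lie in $\I$, matching the stated conditions. For the converse, given $I'_t$ of the required form, the hypotheses $I_c, I_d \in \I$ provide precisely the two shift-edges needed to splice $I'_t$ into $C_\omega$ in place of $I_t$; the resulting directed cycle is a minimal circuit whose vertex set is $\I'$, and Theorem \ref{circuitthm} then realizes $\nabla_{\I'}$ as a maximal simplex adjacent to $\nabla_{\I}$.
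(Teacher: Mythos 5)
The paper does not prove Theorem~\ref{thm:adjacency}: it is stated with a citation to \cite{polytopes} (Lam--Postnikov) and used as a black box, so there is no in-paper proof to compare against. Judged on its own terms, your proof via the circuit triangulation is the natural route and the argument is essentially sound, but two steps are asserted rather than proved, and both are the real content of the theorem.

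First, the claim that $I'_t$ must occupy the same cyclic slot as $I_t$. The $n-1$ shared vertices of $\I$ and $\I'$ do have a fixed cyclic order by sortedness, but you still need to rule out $I'_t$ being inserted between a different consecutive pair. The right justification is the following counting argument: if $I_t$ sits between $J_i,J_{i+1}$ in $C_\omega$ and $I'_t$ sits between $J_j,J_{j+1}$ in $C_{\omega'}$ with $i\neq j$, then $J_i\to J_{i+1}$ is a single shift edge (it is one in $C_{\omega'}$) and so is $J_j\to J_{j+1}$ (from $C_\omega$), hence $J_1\to\cdots\to J_{n-1}\to J_1$ is a closed walk of length $n-1$, contradicting the fact that minimal circuits in $G_{k,n}$ have length $n$. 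Without this, "rigidly determined by the sorted interleaving'' only fixes the cyclic order of the shared vertices, not where the $n$-th vertex goes. Second, in the converse you assert that the spliced cycle "is a minimal circuit whose vertex set is $\I'$.'' You need to say why the spliced closed walk of length $n$ visits $n$ distinct vertices (any repeat would yield a shorter closed subwalk, again contradicting minimal circuit length $n$), why $I_c,I_d$ are forced to be the actual neighbors $I_{t-1},I_{t+1}$ of $I_t$ in $C_\omega$ (two distinct in-neighbors of $I_t$ in $G_{k,n}$ are never sorted with each other, so at most one lies in $\I$; likewise for out-neighbors), and only then invoke Theorem~\ref{circuitthm} to conclude $\I'$ is maximal sorted. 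Finally, the remark that the $-1$ case "arises by reversing the orientation of $C_\omega$'' is off: the sign is just an artifact of which of the two affected positions of $I_t$ you label $a$ and which you label $b$; swapping the labels interchanges $I_c\leftrightarrow I_d$ and flips the sign, with no reversal of the circuit involved.

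With those gaps filled, your commuting-shifts analysis of when a second intermediate $I_{t-1}\to I'_t\to I_{t+1}$ exists is correct and cleanly gives both the necessity and the uniqueness of $I'_t$, so the overall strategy works; you should just make explicit the appeals to the minimality of circuit length, since that is what makes the local picture global.
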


In terms of minimal circuits, $I'_t$ is obtained by a \emph{detour} from the minimal circuit that
corresponds to $\mathcal{I}$, as presented in Figure~\ref{detour}. Every detour can be defined by the triple
$\{I_c,I_t,I_d\}$ (again see Figure~\ref{detour}).

\begin{figure}[h]
\centering
\includegraphics[height=1.3in]{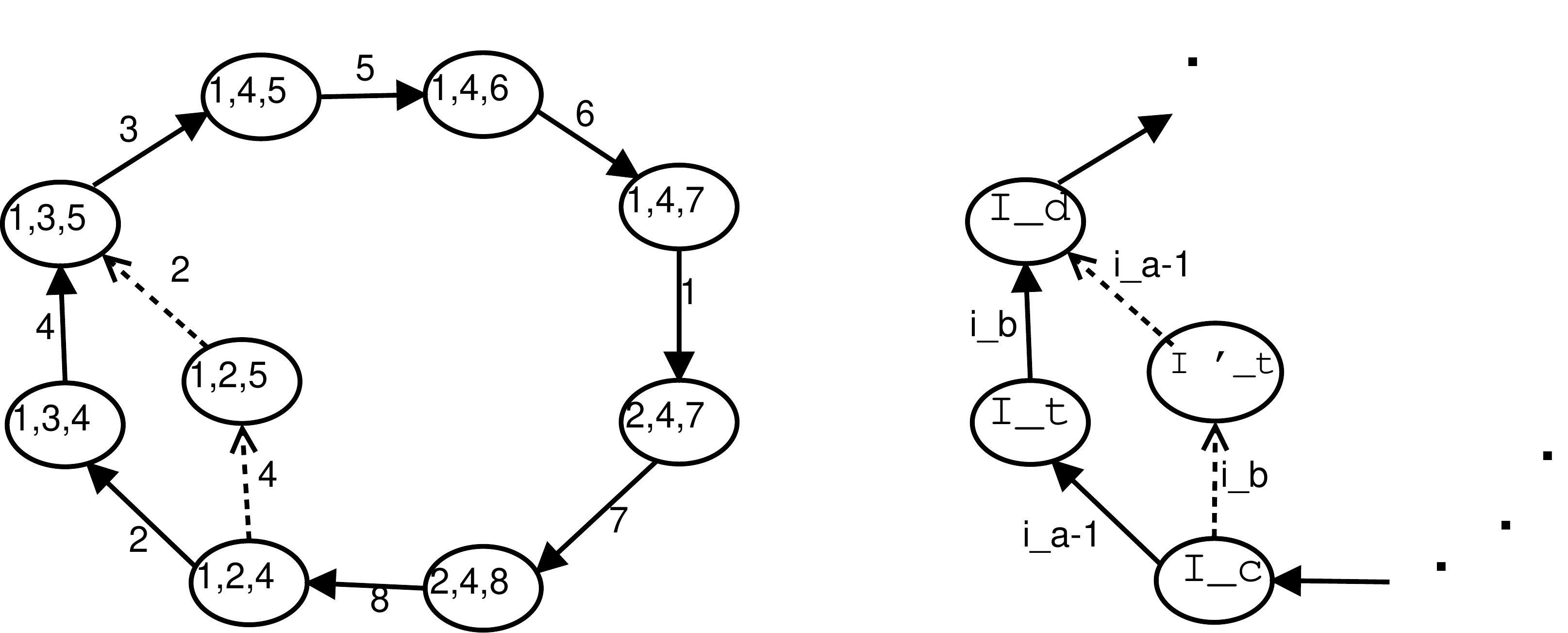}%
\caption{The figure on the left is a minimal circuit in $G_{3,8}$. The tuple (1,3,4) can be replaced with the tuple (1,2,5) according to Theorem~\ref{thm:adjacency}. The figure on the right depicts the situation described in the theorem.}
\label{detour}
\end{figure}
\section{Arrangements of second largest minors}
In this section, we describe necessary and sufficient conditions on
arrangements of second largest minors. Theorem~\ref{thm:sorted} (given
below) implies that maximal arrangements of largest minors are in
bijection with the vertices of $\Gamma_{(k,n)}$. In this section, we
will show that the structure of arrangements of second largest minors
is strongly related to the structure of edges in $\Gamma_{(k,n)}$. Then, in the next section, we discuss necessary conditions for arrangements of $t$-th largest minors for any $t \geq 2$. The case $t=1$, i.e., arrangements of largest minors, was fully resolved in \cite{main}:
\begin{theorem}
A subset of $\mathcal{J} \subset {[n]\choose k}$ is an arrangement of largest minors in $Gr^+(k,n)$ if and only if it is a sorted subset. Equivalently, $\mathcal{J}$ is an arrangement of largest minors if and only if it corresponds to a simplex in Sturmfels' triangulation of the hypersimplex. Maximal arrangements of largest minors contain exactly $n$ minors. The number of maximal arrangements of largest minors in $Gr^+(k,n)$ equals the Eulerian number $A(n-1,k-1)$.
\label{thm:sorted}
\end{theorem}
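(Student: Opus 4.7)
The plan is to split the equivalence into three pieces: (a) every arrangement of largest minors is a sorted collection, (b) every sorted collection is realized as an arrangement of largest minors, and (c) count the maximal ones. Parts (a) and (c) will be short once the right tools are recognized; part (b) is where the real work lies.

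For the forward direction I would argue by contradiction using Corollary \ref{cor:Skandera}. Suppose $\mathcal{J}$ is realized by $A\in Gr^+(k,n)$ with $\Delta_I(A)=M$ for all $I\in\mathcal{J}$, and suppose some pair $I,J\in\mathcal{J}$ is not sorted. Then by Skandera we have
\[
\Delta_{\mathrm{sort}_1(I,J)}(A)\,\Delta_{\mathrm{sort}_2(I,J)}(A)>\Delta_I(A)\,\Delta_J(A)=M^2.
\]
But each factor on the left is at most $M$ by the defining property of an arrangement of largest minors, so their product is at most $M^2$, a contradiction. Hence every arrangement of largest minors is sorted.

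For the reverse direction, given a sorted $\mathcal{J}$, I would first extend it to a maximal sorted collection $\mathcal{I}$ (possible because, by Theorem~\ref{2.1}, sortedness is a pairwise condition and thus defines a simplicial complex whose facets are the maximal simplices of Sturmfels' triangulation). The task reduces to exhibiting a point $A\in Gr^+(k,n)$ at which the minors indexed by $\mathcal{I}$ are all equal and strictly dominant, followed by a small perturbation that keeps those in $\mathcal{J}$ tied while strictly decreasing the minors of $\mathcal{I}\setminus\mathcal{J}$. To construct $A$, I would use the fact that Sturmfels' triangulation is a regular triangulation of $\Delta_{k,n}$: there is a weight vector $w\in\mathbb{R}^{\binom{[n]}{k}}$ whose upper envelope, when projected, recovers the triangulation, and in particular the maximal face corresponding to $\mathcal{I}$ is the set where a linear functional $L(\epsilon_I)=\sum_{i\in I}\alpha_i$ (for a suitable $\alpha\in\mathbb{R}^n$ coming from the regularity data) is maximized. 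Starting from any $A^{(0)}\in Gr^+(k,n)$ with $\Delta_I(A^{(0)})$ equal for all $I\in\mathcal{I}$ (produced by a dimension count or an explicit cluster-type parameterization), I would rescale the $j$-th column by $t^{\alpha_j}$; then
\[
\Delta_I(A)=t^{\sum_{j\in I}\alpha_j}\,\Delta_I(A^{(0)}),
\]
so for $t\gg 1$ the minors indexed by $\mathcal{I}$ are common and strictly dominant. A further tilt of $\alpha$ along a direction linear on the face of $\mathcal{I}$ but negative on $\mathcal{I}\setminus\mathcal{J}$ produces the desired arrangement $\mathcal{J}$.

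For the counting statement, part (a) and part (b) together give a bijection between arrangements of largest minors in $Gr^+(k,n)$ and simplices of Sturmfels' triangulation; the maximal ones therefore correspond to the maximal simplices, which by Theorem~\ref{2.1} all have cardinality $n$ and whose number equals the normalized volume of $\Delta_{k,n}$, namely the Eulerian number $A(n-1,k-1)$ by Laplace's theorem.

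The main obstacle is the realizability step in (b): producing an $A^{(0)}\in Gr^+(k,n)$ whose minors are equal on a prescribed maximal sorted collection. My fallback, if a direct parameterization is awkward, is to replace it by a deformation argument inside a positroid cell, choosing $A^{(0)}$ to lie in a codimension-$(n-1)$ subvariety of $Gr^+(k,n)$ cut out by the equalities $\Delta_{I_1}=\cdots=\Delta_{I_n}$ and verifying nonemptiness via a dimension count against the open positroid cell; alternatively, one can pass directly through the tropical/toric description in Theorem~\ref{circuitthm}, using the circuit structure of $\mathcal{I}$ to build $A$ column by column along the circuit.
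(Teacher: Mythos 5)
Your forward direction is correct and matches the standard argument: Corollary~\ref{cor:Skandera} immediately rules out an unsorted pair among the maximizers, since $\Delta_{\mathrm{sort}_1}\Delta_{\mathrm{sort}_2}>M^2$ would force one of these factors above the maximum. The count in part (c) is also fine once the bijection is in hand, via Theorem~\ref{2.1} and Laplace's volume formula. Note that the paper does not reprove this theorem; it imports it from \cite{main}, and the engine behind the realizability direction there is Theorem~\ref{thm:torus_action}.

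The gap is in part (b), and it is not merely the unfinished construction of $A^{(0)}$. The rescaling step cannot work as described. If $\sum_{j\in I}\alpha_j$ is constant over $I\in\mathcal{I}$, then since the $n$ vectors $\epsilon_I$, $I\in\mathcal{I}$, affinely span the hyperplane $\{x_1+\cdots+x_n=k\}$, the functional $\sum_j\alpha_j x_j$ is constant on that entire hyperplane, hence on \emph{every} $\epsilon_J$. So rescaling columns by $t^{\alpha_j}$ multiplies all Pl\"ucker coordinates by the same scalar and changes nothing about the relative ordering. There is no $\alpha$ that keeps the $\mathcal{I}$-minors tied and simultaneously boosts them relative to the rest; equivalently, a maximal simplex of Sturmfels' triangulation is not a face of the hypersimplex, so no linear functional on $\R^n$ can single it out as a set of maximizers. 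The regularity of the triangulation supplies a piecewise-linear support function, not a globally linear one, and the torus action can only produce globally linear shifts of $\log\Delta$.

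Consequently the whole burden falls on showing that some $A^{(0)}$ with $\Delta_I(A^{(0)})$ equal on a maximal sorted $\mathcal{I}$ exists \emph{and} that these minors are automatically the strictly largest at such a point. This is exactly what Theorem~\ref{thm:torus_action} asserts, and neither of your fallbacks closes it: a naive dimension count on the locus $\{\Delta_{I_1}=\cdots=\Delta_{I_n}\}$ inside the positive cell does not establish nonemptiness, and the appeal to Theorem~\ref{circuitthm} is not developed into an argument. The correct route (as in \cite{main}) is to show that for any $A$ the torus orbit meets the locus where the $\mathcal{I}$-minors coincide (a convexity/fixed-point argument on $\log\Delta_I$ as linear functions of the logarithmic torus parameters), and then observe that the Skandera inequalities force these common values to dominate once they are equal, precisely because $\mathcal{I}$ is a maximal sorted collection. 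Your proposal would need to absorb this as its core lemma; the ``tilt of $\alpha$'' idea to then pass from $\mathcal{I}$ to $\mathcal{J}\subset\mathcal{I}$ is fine, since $\mathcal{J}$ is a proper face of the simplex $\nabla_\mathcal{I}$ and the vertices are affinely independent.
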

For completeness, let us also mention the known result regarding arrangements of smallest minors. We first introduce the concept of weak separation as it appears in \cite{LZ}.

\begin{definition}
Let $I,J\in {[n]\choose k}$, and denote $I\setminus J = \{a_1,\dots,a_r\}$, $J\setminus I=\{b_1,\dots,b_r\}$ for $a_1 < \ldots < a_r$ and $b_1 < \ldots < b_r$. We say that $I$ and $J$ are weakly separated if there exists some $ 0 \leq s \leq r$ such that
$$a_1 < \cdots < a_s < b_1< \dots < b_r < a_{s+1} < \cdots < a_r,$$
or
$$b_1 < \cdots < b_s < a_1< \dots < a_r < b_{s+1} < \cdots < b_r.$$

A subset of $[n]\choose k$ is called weakly separated if every two elements in it are weakly separated.
\end{definition}

In \cite{LZ}, Leclerc and Zelevinsky conjectured that all maximal (by containment) weakly
separated subsets in ${[n]\choose k}$ have the same number of elements, which equals $k(n-k)+1$.
This conjecture was proved independently in
\cite{DKK} and in \cite{OPS}.

\begin{theorem}\cite{main}
Any weakly separated subset in ${[n]\choose k}$ is an arrangement of smallest minors in $Gr^+(k,n)$. If $k \in \{1,2,3, n-3,n-2,n-1\}$, then the converse is also true. Maximal (by size) arrangement of smallest minors contains at least $k(n-k)+1$ elements.
\label{weakseparation}
\end{theorem}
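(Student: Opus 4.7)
The plan is to prove the three assertions separately, treating the forward implication as the heart of the matter, with the size bound following as a corollary and the converse handled by case analysis.

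For the forward direction (weakly separated $\Rightarrow$ arrangement of smallest minors), given a weakly separated subset $W \subset {[n]\choose k}$, the goal is to construct a point $A \in Gr^+(k,n)$ such that $\Delta_I(A)$ is constant for $I \in W$ and strictly larger for $I \notin W$. The natural framework is the plabic graph parametrization of $Gr^+(k,n)$: by the Oh-Postnikov-Speyer theorem \cite{OPS}, every weakly separated subset extends to the full collection of face labels of a reduced plabic graph, which forms a cluster in the cluster-algebra structure on the Grassmannian. I would first reduce to the case that $W$ is itself the set of face labels of some plabic graph $G$, then parametrize $Gr^+(k,n)$ by positive edge weights of $G$, and assign weights that make the face-minors indexed by $W$ equal to a common small value $\epsilon$ while the boundary minors remain bounded away. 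For a subset $J \notin W$, the Laurent expansion of $\Delta_J$ in the cluster coordinates is subtraction-free and involves at least one monomial not supported entirely on the variables set to $\epsilon$, so $\Delta_J > \epsilon$ for $\epsilon$ sufficiently small.

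For the converse over $k \in \{1,2,3,n-3,n-2,n-1\}$, the key is to show that if $\{I,J\}$ is not weakly separated then $\Delta_I$ and $\Delta_J$ cannot simultaneously be smallest minors. The strategy is to exhibit, for each such pair, a three-term Pl\"ucker relation of the form $\Delta_I\Delta_J = \Delta_{I'}\Delta_{J'} + \Delta_{I''}\Delta_{J''}$ (with all summands positive in $Gr^+(k,n)$) which forces at least one of $\Delta_{I'}, \Delta_{J'}, \Delta_{I''}, \Delta_{J''}$ to be strictly smaller than $\min(\Delta_I, \Delta_J)$. The cases $k=1, n-1$ are vacuous, $k=2,n-2$ reduce to the standard three-term Pl\"ucker relation $\Delta_{ac}\Delta_{bd} = \Delta_{ab}\Delta_{cd} + \Delta_{ad}\Delta_{bc}$ for crossing chords $a<b<c<d$, and $k=3,n-3$ require a bounded finite case check that I would handle using the symmetry $I \mapsto [n]\setminus I$ to relate $k$ and $n-k$.

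For the size bound, I combine the forward direction with the Leclerc-Zelevinsky purity theorem (proved in \cite{DKK,OPS}): every maximal-by-inclusion weakly separated subset of ${[n]\choose k}$ has exactly $k(n-k)+1$ elements. Applying the forward direction to such a maximal weakly separated set produces an arrangement of smallest minors with $k(n-k)+1$ elements, yielding the claimed lower bound on the maximal size. The main obstacle in the whole argument is the forward direction, specifically verifying that the plabic-graph construction produces $\Delta_J > \epsilon$ uniformly for all $J \notin W$; this requires careful tracking of which monomials appear in the Laurent expansion of each non-cluster minor in terms of the chosen cluster, and is the place where one genuinely uses weak separation rather than just a generic positivity argument.
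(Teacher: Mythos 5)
Note first that Theorem~\ref{weakseparation} is quoted here from \cite{main} and carries no proof in this paper, so there is no in-paper argument to compare against; I will assess your proposal on its own terms. Your route for the forward direction (plabic graphs and the Grassmannian cluster structure) is a reasonable one, but the key step as written does not hold up. Once you replace $W$ by a maximal weakly separated $\widetilde{W}$ (the faces of a plabic graph) and set every cluster variable $\Delta_I$, $I\in\widetilde{W}$, to $\epsilon$, \emph{every} Laurent monomial in the expansion of a non-cluster $\Delta_J$ is supported on the $\epsilon$-variables, so ``at least one monomial not supported entirely on the variables set to $\epsilon$'' is vacuous and gives no inequality. What actually does the work is $\Z^n$-homogeneity: each Laurent monomial $\prod_I\Delta_I^{a_I}$ in the subtraction-free expansion of $\Delta_J$ satisfies $\sum_I a_I\,\epsilon_I=\epsilon_J$, hence $\sum_I a_I=1$, so the substitution yields $\Delta_J=N_J\,\epsilon$ with $N_J$ the sum of the positive integer coefficients. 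One must then prove $N_J\geq 2$ for $J\notin\widetilde{W}$: if $N_J=1$ then $\Delta_J=\prod_I\Delta_I^{a_I}$ identically on the Pl\"ucker cone, a Laurent-monomial identity ruled out by pulling back to the UFD $\C[\Mat_{k\times n}]$, where the maximal minors are pairwise non-associate irreducibles. Separately, the reduction to maximal $W$ is not free: the construction above makes \emph{all} of $\widetilde{W}$, not just $W$, equal and smallest, so for a strict $W\subsetneq\widetilde{W}$ you need a perturbation (e.g.\ $\Delta_I=\epsilon$ on $W$ and $\Delta_I=\epsilon(1+\delta)$ on $\widetilde{W}\setminus W$, with $\delta$ small against the gap $N_J\geq 2$) together with a check that this stays in $Gr^+(k,n)$ and preserves the strict inequalities.

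For the converse, $k\in\{1,n-1\}$ is indeed vacuous and your $k\in\{2,n-2\}$ argument via $\Delta_{ac}\Delta_{bd}=\Delta_{ab}\Delta_{cd}+\Delta_{ad}\Delta_{bc}$ is correct. But for $k=3$ (and $k=n-3$ by complementation), ``a bounded finite case check'' glosses over the real issue: not every non-weakly-separated pair $\{I,J\}$ with $|I|=|J|=3$ fits into a single three-term Pl\"ucker relation with $\Delta_I\Delta_J$ on the left, and the argument requires chaining relations and exploiting structure special to $k\leq 3$. If the naive pairwise case check worked, it would be $k$-independent, yet the converse is known to fail for $4\leq k\leq n-4$; so this is precisely where the content of the $k=3$ case is hiding, and it needs to be supplied rather than asserted. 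The size-bound argument, combining the forward direction with purity \cite{DKK,OPS}, is fine.
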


As a warm-up, we start our discussion with the case $k=2$.
%\begin{definition}
%A stratum $S_{\I}$ is called extremal if $\I_l$ is maximal.
%\end{definition}
%\begin{definition}
%Let $\mathcal{J}\subset{[n]\choose k}$ be an arrangement of largest minors. We say that %$\mathcal{Y}\subset{[n]\choose k}$ is an arrangement of $t^{th}$ largest minors with respect to $\mathcal{J}$ ($t %\geq 2$) if
%there exists a nonempty stratum $S_{\I}$ such that $\I_0=\emptyset$, $\I_{l}=\mathcal{J}$ and %$\I_{l-t+1}=\mathcal{Y}$.
%\end{definition}
%\begin{definition}
%Let $\mathcal{J}\subset{[n]\choose k}$ be an arrangement of largest minors. We say that $\mathcal{Y}\subset{[n]\choose %k}$ is a $(t,\mathcal{J})-$largest arrangement ($t \geq 2$) if
%there exists a nonempty stratum $S_{\U}$ such that $\U_0=\emptyset$, $\U_{l}=\mathcal{J}$ and %$\U_{l-t+1}=\mathcal{Y}$.
%We say that $W \in {[n]\choose k}$ can be $(t,\mathcal{J})-$largest minor if there exists a $(t,\mathcal{J})-$largest %arrangement $\mathcal{Y}$ such that $W \in \mathcal{Y}$
%\end{definition}
\subsection{The case $k=2$: maximal thrackles}
Consider the space $Gr^{+}(2,n)$, and let
$\mathcal{J}\subset{[n]\choose 2}$ be a maximal arrangement of largest
minors (hence it corresponds to a maximal thrackle. We will later
consider the case in which no maximality assumption is involved). Given $W \in {[n]\choose 2}$, we ask whether $W$ is $(2,\mathcal{J})-$largest minor. That is, whether there exists an element in $Gr^{+}(2,n)$ in which the collection of largest minors is $\mathcal{J}$ and $W$ is second largest. Our theorem below gives necessary and sufficient conditions on such $W$.
\begin{theorem}
Let $W \in {[n]\choose 2}$ and let $\mathcal{J}\subset{[n]\choose 2}$ be some maximal arrangement of largest minors, such that $W \notin \mathcal{J}$. The following four statements are equivalent.
\begin{enumerate}
  \item $W$ is a $(2,\mathcal{J})-$largest minor.
  \item There exists a vertex $\mathcal{Q}$ in $\Gamma_{(2,n)}$ that is adjacent to $\mathcal{J}$, such that $W \in \mathcal{Q}$.
  \item There exists $J \in \mathcal{J}$ such that $(\mathcal{J} \setminus J) \cup W$ is an arrangement of largest minors.
  \item There exist four distinct vertices labelled $a, a+1, b, b+1 \textrm{ (mod) } n$ such that
$\{(a,b),(a-1,b),(b+1,a)\} \subset \mathcal{J}$ and $W=(a-1, b+1)$.
\end{enumerate}
In particular, the minors that can be second largest are in bijection with the edges of $\Gamma_{(2,n)}$ that are connected to vertex $\mathcal{J}$, and the number of such minors is at most $n$.
\label{thm:2byn}
\end{theorem}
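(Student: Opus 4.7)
The plan is to set up the four equivalences as a cycle split into a combinatorial part, $(2)\Leftrightarrow(3)\Leftrightarrow(4)$, and a geometric part, $(1)\Leftrightarrow(3)$. The final assertion of the theorem --- the bijection with edges of $\Gamma_{(2,n)}$ incident to $\mathcal{J}$, together with the bound of $n$ --- will then follow from $(1)\Leftrightarrow(2)$, since each neighbor $\mathcal{Q}$ of $\mathcal{J}$ contributes a unique $W\in\mathcal{Q}\setminus\mathcal{J}$, and the degree of any vertex of $\Gamma_{(2,n)}$ is at most $n$ by Theorem~\ref{thm:adjacency}.

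For $(2)\Leftrightarrow(3)$ I would simply unpack the definition of the dual graph: by Theorem~\ref{thm:sorted}, each vertex of $\Gamma_{(2,n)}$ is a maximal sorted collection of size $n$, so sharing a facet amounts to $|\mathcal{J}\cap\mathcal{Q}|=n-1$, i.e.\ $\mathcal{Q}=(\mathcal{J}\setminus J)\cup\{W\}$ for a unique $J$. For $(3)\Leftrightarrow(4)$ I would specialize Theorem~\ref{thm:adjacency} to $k=2$: the only allowed replacements of $I_t=\{i_1,i_2\}$ have $I'_t=\{i_1\mp 1,\, i_2\pm 1\}$ with the two intermediate pairs $\{i'_1,i_2\},\{i_1,i'_2\}$ in $\mathcal{J}$, and writing $J=(a,b)$ in the outward case gives exactly the four vertices $a-1,a,b,b+1$ and the three required pairs of (4); the inward case is obtained by swapping the roles of $J$ and $W$ under relabeling.

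For $(3)\Rightarrow(1)$ I would run a continuity argument. Setting $\mathcal{J}':=(\mathcal{J}\setminus J)\cup\{W\}$, both strata $S_\mathcal{J}$ and $S_{\mathcal{J}'}$ are nonempty by Theorem~\ref{thm:sorted}. Along any continuous path in $Gr^+(2,n)$ from a point of $S_{\mathcal{J}'}$ to one of $S_\mathcal{J}$, the value of $\Delta_W$ transitions from strictly above to strictly below the common value shared with $\mathcal{J}\cap\mathcal{J}'$; Corollary~\ref{cor:Skandera} applied to the unsorted pair $(J,W)$ forbids $\Delta_W$ from ever equaling the common maximum of all of $\mathcal{J}$, since that would make $\mathcal{J}\cup\{W\}$ tied at the top and violate Skandera. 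Selecting a point on the path inside $S_\mathcal{J}$ but close to the transition, the minors in $\mathcal{J}$ are tied at some value $M$, $\Delta_W<M$ can be made arbitrarily close to $M$, and by continuity the remaining minors remain strictly below $\Delta_W$; this realizes $W$ as a $(2,\mathcal{J})$-largest minor.

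The heart of the proof, and the expected main obstacle, is $(1)\Rightarrow(3)$. Assuming $W$ is $(2,\mathcal{J})$-largest, with $\Delta_I=M$ for $I\in\mathcal{J}$, $\Delta_W=m<M$, and $\Delta_I\leq m$ elsewhere, I would first apply Corollary~\ref{cor:Skandera} to each $J\in\mathcal{J}$ with $(J,W)$ unsorted: the inequality $\Delta_{\sort_1(J,W)}\,\Delta_{\sort_2(J,W)}>Mm$, combined with a short case analysis on whether each $\sort_i(J,W)$ lies in $\mathcal{J}$, equals $W$, or is strictly smaller, forces both sort results to lie in $\mathcal{J}$ (the alternative cases yield products at most $Mm$ or at most $m^2<Mm$). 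At least one such $J$ exists, since otherwise $\mathcal{J}\cup\{W\}$ would be a sorted collection of size $n+1$, contradicting maximality. The hard part will be showing that such a $J$ is \emph{unique}: given two distinct $J_1,J_2\in\mathcal{J}$ each unsorted with $W$, the four forced elements $\sort_i(J_r,W)\in\mathcal{J}$ for $r,i\in\{1,2\}$, together with $J_1,J_2$, should contain an unsorted pair, which I would verify by a case analysis on the cyclic positions of the endpoints of $W,J_1,J_2$, contradicting sortedness of $\mathcal{J}$. The unique $J$, together with $W$ and its two forced sort-images, then realizes the box of (4), yielding (3) and closing the cycle.
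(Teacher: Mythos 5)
Your decomposition into $(2)\Leftrightarrow(3)\Leftrightarrow(4)$ and $(1)\Leftrightarrow(3)$ is essentially the same as the paper's, which proves $(2)\Leftrightarrow(3)\Leftrightarrow(4)$ from Theorems~\ref{thm:adjacency} and~\ref{thm:sorted}, and $(1)\Leftrightarrow(2)$ as the $k=2$ case of Theorem~\ref{thm:kbyn}. Your $(1)\Rightarrow(3)$ argument also tracks the paper closely: the ``hard part'' you flag, namely that $W$ can fail to be sorted with at most one element of $\mathcal{J}$, is precisely the content of the paper's Lemma~\ref{lem:kbynone}, which the paper proves in full generality (for all $k$) by a somewhat lengthy analysis of the quantities $\alpha_{ij},\beta_{ij}$; for $k=2$ a direct case check on six circle points is feasible, but it is a real piece of work that your outline does not carry out.

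The genuine gap is in $(3)\Rightarrow(1)$. A continuity argument along an arbitrary path from $S_{\mathcal{J}'}$ to $S_{\mathcal{J}}$ does not work as described, because such a path generically leaves both strata immediately: in the interior of the path no two Pl\"ucker coordinates need be equal, so there is no ``common value shared with $\mathcal{J}\cap\mathcal{J}'$'' to track, and hence no ``point inside $S_{\mathcal{J}}$ but close to the transition.'' What actually needs to be shown is that one can land \emph{inside} $S_{\mathcal{J}}$ (so that all of $\mathcal{J}$ is tied at the top) with $\Delta_W$ simultaneously larger than every other minor, and the intermediate value theorem gives no control of that second condition once you leave the stratum. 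The paper's proof of Theorem~\ref{thm:kbyn} handles this by an explicit construction: start from a matrix $C$ whose columns $i_a$ and $i_b+1$ are replaced by $C_{i_a-1}+\epsilon C_{i_a}$ and $C_{i_b}+\epsilon C_{i_b+1}$, then apply the torus action (Theorem~\ref{thm:torus_action}) to normalize all minors in $\mathcal{J}$ to $1$, and verify by the three-term Pl\"ucker relation that $\Delta_W=1-O(\epsilon^2)$ whereas every other detour minor is $1-O(\epsilon)$ or $1-O(1)$. That quantitative separation is what makes $W$ second largest, and it is not recoverable from a soft topological argument. Without some such construction, your $(3)\Rightarrow(1)$ does not close.
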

Theorems~\ref{thm:adjacency} and~\ref{thm:sorted} imply the equivalence $(2) \iff (3)\iff (4)$. The equivalence $(1) \iff (2)$ is a special case of Theorem~\ref{thm:kbyn}, which we will prove later in this section.\\

We emphasize the relation, implied by our theorem, between arrangements of second largest minors and the structure of $\Gamma_{(2,n)}$.  Let $\mathcal{J}\subset{[n]\choose 2}$ be a maximal thrackle, and let
\begin{center}
$T= \{A \in Gr^{+}(2,n)\textrm{ }|\textrm{ }\textrm{the set of largest minors of } A \textrm{ is } \mathcal{J} \}$.
\end{center}
Let $W \in {[n]\choose 2}$. Theorem~\ref{thm:2byn}(2) implies that there exists  $A \in T$ for which $W$ is the second largest minor if and only if there exists a vertex $\mathcal{Q}$ in $\Gamma_{(2,n)}$ that is adjacent to $\mathcal{J}$ such that $W \in \mathcal{Q}$.
\begin{example}
Consider the maximal thrackle $\mathcal{J}$ in
Figure~\ref{thrackexample} appearing in the left part on the
top. Using part (4) of Theorem~\ref{thm:2byn}, we identify the
elements in $[n]\choose 2$ that can be second largest minors, and
denote them by red lines (and this is the second graph at the top of
the figure). Then, on the bottom, we describe the thrackle which resulted by adding the red line and removing one of the edges of $\mathcal{J}$. Those three cases correspond to the three edges that are connected to $\mathcal{J}$ in $\Gamma_{(2,5)}$.
\end{example}
\begin{figure}[h]
\includegraphics[height=1.3in]{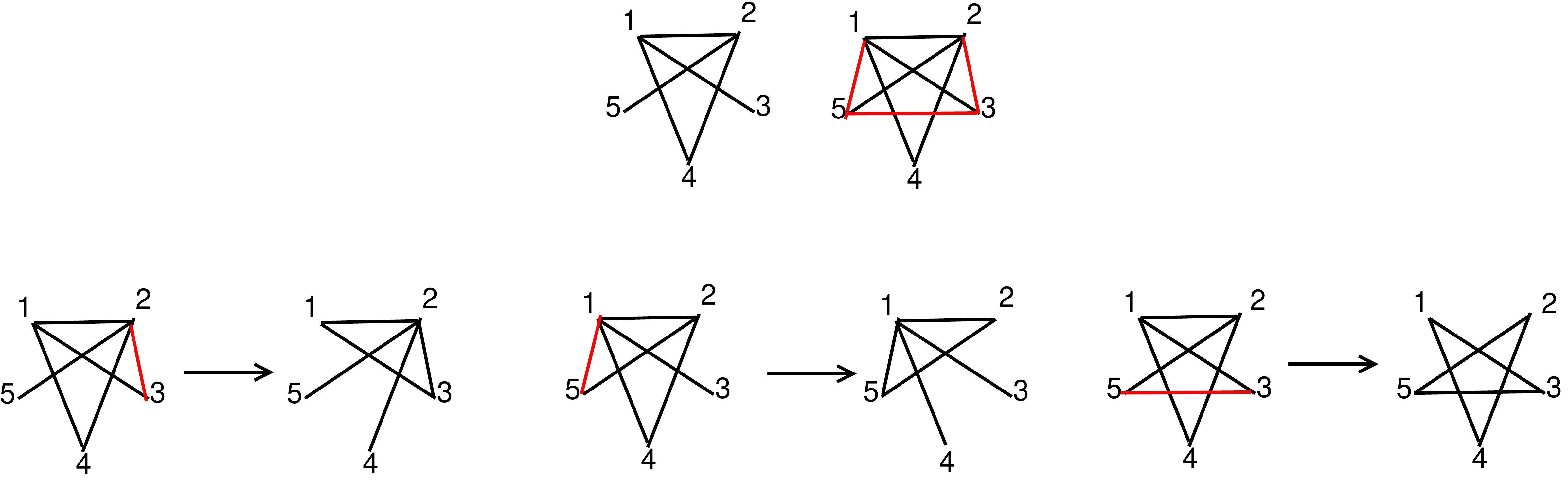}
\caption{}
\label{thrackexample}
\end{figure}
\subsection{The general case}
In the previous subsection, we considered the space\\ $Gr^+(2,n)$ and discussed arrangements of second largest minors when $\mathcal{J}$ was maximal. In this subsection, we consider the space $Gr^+(k,n)$ and discuss arrangements of second largest minors, with no assumption on $\mathcal{J}$. Theorem~\ref{thm:kbynone} summarizes our results. The special case in which $\mathcal{J}$ is maximal will be discussed in Theorem~\ref{thm:kbyn}.
%Before presenting our result, we will make a simple observation. Let $\U=(\U_0,\U_1, \dots,\U_l)$ be an arrangement %of minors. Then $\U_l \neq \emptyset$. This follows from the fact that each element in $Gr^{\geq}(k,n)$ is of rank %$k$, so there exists at least one positive minor.
%\begin{definition}
%Let $\mathcal{J}\subset{[n]\choose k}$ be an arrangement of largest minors. We say that $\mathcal{Y}\subset{[n]\choose %k}$ is a $(t,\mathcal{J})-$largest arrangement ($t \geq 2$) if
%there exists a nonempty stratum $S_{\U}$ such that $\U_0=\emptyset$, $\U_{l}=\mathcal{J}$ and %$\U_{l-t+1}=\mathcal{Y}$.
%We say that $W \in {[n]\choose k}$ can be $(t,\mathcal{J})-$largest minor if there exists a $(t,\mathcal{J})-$largest %arrangement $\mathcal{Y}$ such that $W \in \mathcal{Y}$
%\end{definition}
\begin{theorem}
Let $W \in {[n]\choose k}$ and let $\mathcal{J}\subset{[n]\choose k}$ be some arrangement of largest minors such that $W \notin \mathcal{J}$. Denote $|\mathcal{J}|=c$. If $W$ is a $(2,\mathcal{J})-$largest minor, then one of 1,2 holds, or equivalently, one of 3,4 holds:
\begin{enumerate}
  \item The collection $\{W\} \cup \mathcal{J}$ is sorted
  \item There exists $J \in \mathcal{J}$ such that $W$ and $J$ are not sorted, and $(\mathcal{J} \setminus J) \cup \{W\}$ is a sorted collection.
  \item $\nabla_{\{W\} \cup \mathcal{J}}$ is a $c$-dimensional simplex in Sturmfels triangulation of the hypersimplex $\Delta_{k,n}$.
  \item There exists a $c-1$-dimensional simplex $\nabla_\Y$ in Sturmfels triangulation such that $\epsilon_W$ is a vertex in $\nabla_\Y$, and the simplices $\nabla_\Y$,$\nabla_\J$ share a common facet.
\end{enumerate}
\label{thm:kbynone}
\end{theorem}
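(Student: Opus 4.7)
The equivalences $(1) \Leftrightarrow (3)$ and $(2) \Leftrightarrow (4)$ are reformulations using Sturmfels' triangulation and Theorem~\ref{thm:sorted}: a sorted collection of size $c+1$ corresponds to a $c$-dimensional simplex of the triangulation, and two simplices share a common facet exactly when their vertex sets differ by a single element. The substantive implication is that if $W$ is a $(2,\J)$-largest minor then (1) or (2) holds. I plan to establish this by combining Skandera's inequality (Corollary~\ref{cor:Skandera}) with the disjoint-relative-interior property of the triangulation.

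By hypothesis there is a point $A \in Gr^+(k,n)$ with $\Delta_J = M$ for all $J \in \J$, $\Delta_W = m < M$, and $\Delta_I \leq m$ for $I \notin \J$. If $\{W\} \cup \J$ is sorted we are in case (1), so fix some $J \in \J$ with $W, J$ not sorted. Corollary~\ref{cor:Skandera} gives $\Delta_{sort_1(W,J)} \Delta_{sort_2(W,J)} > \Delta_W \Delta_J = mM$. Because every minor equals $M$ on $\J$ and is at most $m$ off $\J$, a factor of value at most $m$ would force the product to be at most $mM$; hence both $sort_1(W,J)$ and $sort_2(W,J)$ lie in $\J$.

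To conclude case (2) I must show that $W$ is non-sorted with at most one element of $\J$. Suppose for contradiction that $W$ is non-sorted with two distinct $J_1, J_2 \in \J$, and set $A_i := sort_i(W, J_1)$, $B_i := sort_i(W, J_2)$; by the previous step all four lie in $\J$. Because the sort operation preserves multisets, $\epsilon_{A_1} + \epsilon_{A_2} = \epsilon_W + \epsilon_{J_1}$ and $\epsilon_{B_1} + \epsilon_{B_2} = \epsilon_W + \epsilon_{J_2}$, so the two triples $\{A_1, A_2, J_2\}$ and $\{B_1, B_2, J_1\}$, both lying in the sorted $\J$, share the common barycentre $P := \tfrac{1}{3}(\epsilon_W + \epsilon_{J_1} + \epsilon_{J_2})$. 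In the generic case, where each triple consists of three distinct elements, both are $2$-dimensional simplices of Sturmfels' triangulation containing $P$ in their relative interior, so the disjoint-relative-interior property forces the two triples to coincide; but $J_1$ belongs to the second triple and not the first (since $W, J_1$ not sorted gives $A_i = sort_i(W, J_1) \neq J_1$), a contradiction. The degenerate subcases, when $A_i = J_2$ or $B_i = J_1$ for some $i$, are handled analogously by applying the same disjoint-interior argument to the lower-dimensional simplex containing $P$; in the fully degenerate case $A_1 = J_2$ and $B_1 = J_1$, the identity $\epsilon_{A_2} + \epsilon_{B_2} = 2\epsilon_W$ of integer vectors forces $A_2 = B_2 = W$, whence $W \in \J$, contradicting $W \notin \J$.

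Consequently $W$ is non-sorted with a unique $J \in \J$, and then $(\J \setminus \{J\}) \cup \{W\}$ is sorted, because $\J \setminus \{J\} \subset \J$ is sorted and $W$ is sorted with every element of $\J \setminus \{J\}$; this establishes case (2). I expect the main obstacle to be the careful bookkeeping of the degenerate subcases in the combinatorial step of paragraph three, and the clean invocation of the triangulation's disjoint-relative-interior property to uniformly handle simplices of differing dimension.
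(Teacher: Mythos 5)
Your proof is correct, and it takes a genuinely different route from the paper. The paper first proves a standalone combinatorial lemma (Lemma~\ref{lem:kbynone}, roughly a page of case analysis on partial sums $I_{ij}$) asserting that if $W$ is non-sorted with both $U$ and $V$, the six-element set $\{U,V,Sort_1(W,V),Sort_2(W,V),Sort_1(W,U),Sort_2(W,U)\}$ cannot be sorted; it then combines this with Skandera's inequality by observing that some $Sort_i(W,U)$ must fall outside $\J$, yielding a Pl\"ucker coordinate strictly between $\Delta_W$ and $1$, contradicting that $W$ is second largest. You run the argument in the opposite direction: from Skandera plus the second-largest hypothesis you first conclude that \emph{both} $Sort_1(W,J)$ and $Sort_2(W,J)$ must land inside $\J$ for every non-sorted $J\in\J$, and then extract a contradiction geometrically from the barycentre identity $\frac13(\epsilon_W+\epsilon_{J_1}+\epsilon_{J_2})$ sitting in two distinct faces of $\nabla_\J$. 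Your route replaces the paper's long combinatorial lemma by the disjoint-relative-interior property (equivalently, uniqueness of barycentric coordinates in the simplex $\nabla_\J$), which is cleaner and shorter; the paper's lemma is, however, a purely combinatorial statement about sorted sets, proved without reference to any point of $Gr^+(k,n)$, and so is reusable in contexts where no such point is available. One minor remark: the degenerate-case bookkeeping at the end of your third paragraph can be bypassed entirely, since both triples are subsets of the single sorted set $\J$ and hence faces of the one simplex $\nabla_\J$, where barycentric coordinates are unique as formal coefficients regardless of coincidences among the labels; this gives $J_1\in\{A_1,A_2,J_2\}$ directly, which is impossible since $A_i\neq J_1$ (as $W,J_1$ are not sorted) and $J_1\neq J_2$.
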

Before presenting the proof, we will prove the following key lemma:
\begin{lemma}
Let $W,U,V \in {[n]\choose k}$ be three different $k$-tuples, such that the following three conditions hold:
\begin{enumerate}
  \item $U$ and $V$ are sorted.
  \item $W$ and $V$ are not sorted.
  \item $W$ and $U$ are not sorted.
\end{enumerate}
Then the set $T=\{U,V,Sort_1(W,V),Sort_2(W,V),Sort_1(W,U),Sort_2(W,U)\}$ is not sorted.
\label{lem:kbynone}
\end{lemma}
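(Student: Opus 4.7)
The plan is to argue by contradiction: assume $T$ is sorted, and extract a non-trivial affine relation among its distinct elements, contradicting the affine independence of the vertices of a simplex in Sturmfels' triangulation.

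The identity that drives the argument is the tautology $\epsilon_{Sort_1(I,J)} + \epsilon_{Sort_2(I,J)} = \epsilon_I + \epsilon_J$, valid for any $I, J \in {[n]\choose k}$ since both sides are the $\{0,1,2\}$-vector recording multiplicities in the multiset $I \cup J$. Applying this identity to $(W, V)$ and to $(W, U)$ and subtracting cancels the two copies of $\epsilon_W$, producing
\[
\epsilon_{Sort_1(W,V)} + \epsilon_{Sort_2(W,V)} + \epsilon_U \;=\; \epsilon_{Sort_1(W,U)} + \epsilon_{Sort_2(W,U)} + \epsilon_V,
\]
an affine relation on the six (possibly repeated) elements of $T$ with coefficient vector $(+1,+1,+1,-1,-1,-1)$ summing to $0$.

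Now suppose for contradiction that $T$ is sorted. By Theorem~\ref{2.1} the distinct elements of $T$ are the vertex set of a simplex in Sturmfels' triangulation of $\Delta_{k,n}$, and hence are affinely independent. Collecting repeats in the displayed relation therefore forces the combined coefficient at every distinct element of $T$ to vanish; equivalently, it forces the multiset equality
\[
\{Sort_1(W,V),\, Sort_2(W,V),\, U\} \;=\; \{Sort_1(W,U),\, Sort_2(W,U),\, V\}.
\]
I would contradict this by tracking $U$: its multiplicity on the left is at least $1$, while on the right $U \neq V$ (the three tuples are distinct) and $U \notin \{Sort_1(W,U), Sort_2(W,U)\}$, since $U = Sort_i(W,U)$ would force $Sort_{3-i}(W,U) = W$ (from $Sort_1 + Sort_2 = W + U$ as a multiset), which is precisely the condition for $\{W, U\}$ to be sorted, contradicting hypothesis~(3).

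The step I expect to require the most care is the passage from the affine relation on six possibly coincident vectors to the multiset equality above: coincidences such as $Sort_1(W,V) = Sort_1(W,U)$ or $Sort_1(W,V) = U$ are allowed a priori, but they merely merge the $\pm 1$ contributions into combined coefficients without cancelling the net positive weight on $U$. Once that bookkeeping is in place, the final coincidence check is short and depends only on $U \neq V$ and on $\{W, U\}$ being non-sorted.
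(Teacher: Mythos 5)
Your proof is correct and is a genuinely different, shorter argument than the paper's. The paper works combinatorially from first principles: it introduces the partial sums $I_{ij}=\sum_{t=i}^{j}(\epsilon_I)_t$, establishes bounds on $\alpha_{ij}=U_{ij}-W_{ij}$ and $\beta_{ij}=V_{ij}-W_{ij}$ under the assumption that $T$ is sorted, and then runs a three-claim induction to derive $\epsilon_U=\epsilon_V$, contradicting $U\neq V$. You instead subtract the two instances of the identity $\epsilon_{\mathrm{Sort}_1(I,J)}+\epsilon_{\mathrm{Sort}_2(I,J)}=\epsilon_I+\epsilon_J$, applied to $(W,V)$ and $(W,U)$, to produce a nontrivial affine relation among the elements of $T$, and invoke Theorem~\ref{2.1} to say that a sorted $T$ would index a nondegenerate simplex of Sturmfels' triangulation, so its distinct elements would be affinely independent — forcing the resulting multiset equality. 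Your coincidence bookkeeping is sound: the net coefficient at $U$ is $+1$ or $+2$, since $U\neq V$ by hypothesis and $U=\mathrm{Sort}_i(W,U)$ would force $\mathrm{Sort}_{3-i}(W,U)=W$, making $\{W,U\}$ sorted and contradicting hypothesis (3); coincidences elsewhere (for example $\mathrm{Sort}_1(W,V)=\mathrm{Sort}_1(W,U)$) merely cancel other terms and never drive the coefficient at $U$ to zero. What your route buys is brevity and conceptual transparency; what it costs is reliance on the nondegeneracy of the simplices $\nabla_{\mathcal I}$, a substantive geometric input drawn from Theorem~\ref{2.1}, whereas the paper's longer argument uses nothing beyond the combinatorial definition of sortedness.
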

\begin{proof}
Given $I \in {[n]\choose k}$ and $1 \leq i \leq j \leq n$, define $I_{ij}=\sum_{t=i}^{j} (\epsilon_{I})_t$. For example, if $I=\{1,3,5,7,8\}\in {[9]\choose 5}$ then $\epsilon_I=101010110$ and $I_{37}=3$. By definition, for a pair of $k$-tuples $I,J \in {[n]\choose k}$ we have
$$\{Sort_1(I,J)_{ij},Sort_2(I,J)_{ij}\}=\Bigg\{\lfloor {\frac{I_{ij}+J_{ij}}{2}}\rfloor, \lceil {\frac{I_{ij}+J_{ij}}{2}}\rceil\Bigg\}$$
(not necessarily respectively). In particular, if $I$ and $J$ are sorted then $Sort_1(I,J)_{ij}$ and $Sort_2(I,J)_{ij}$ differ by at most 1. In order to prove the lemma, assume for contradiction that $T$ is sorted, and let $\alpha_{ij}=U_{ij}-W_{ij}$ ,$\beta_{ij}=V_{ij}-W_{ij}$ for all $1 \leq i \leq j \leq n$. Following the discussion above, the parameters $\alpha_{ij}, \beta_{ij}$ satisfy the following properties for all $1 \leq i \leq j \leq n$ (the proof of each one of the properties is given below).
\begin{enumerate}
  \item $|\alpha_{ij}|,|\beta_{ij}| \leq 2$.
  \item If $|\alpha_{ij}| = 2$ or $|\beta_{ij}| = 2$ then $\alpha_{ij}=\beta_{ij}$.
  \item If $|\alpha_{ij}| = 1$ then $\alpha_{ij}=\beta_{ij}$ or $\beta_{ij}=0$.
  \item If $|\beta_{ij}| = 1$ then $\alpha_{ij}=\beta_{ij}$ or $\alpha_{ij}=0$.
\end{enumerate}
\textbf{Property 1}: We have $$\frac{W_{ij}+U_{ij}}{2}=\frac{U_{ij}-W_{ij}+2W_{ij}}{2}=\frac{\alpha_{ij}}{2}+W_{ij}.$$ The assumption that $T$ is sorted implies that the pair $Sort_1(W,U), W$ is sorted, as well as the pair $Sort_2(W,U), W$. Therefore, $W_{ij}$ differs from both $Sort_1(W,U)_{ij}$ and $Sort_2(W,U)_{ij}$ by at most 1, which implies property 1 (the proof for $\beta_{ij}$ is similar).\\
\textbf{Property 2}: Assume without loss of generality that $\alpha_{ij} = 2$ (the other cases can be handled similarly), so $U_{ij}=W_{ij}+2$. In addition, one of $Sort_1(W,V)_{ij},Sort_2(W,V)_{ij}$ equals $\lfloor \frac{\beta_{ij}}{2}+W_{ij}\rfloor$, and since $T$ is sorted we must have $\beta_{ij}=2=\alpha_{ij}$.\\
\textbf{Properties 3 and 4}: Assume without loss of generality that $\alpha_{ij} = 1$. Combining properties 1 and 2, it is enough to show that $\beta_{ij} \neq -1$. We have $U_{ij}=W_{ij}+1$ and $V_{ij}=W_{ij}+\beta_{ij}$, and since $T$ is sorted, $\beta_{ij} \neq -1$.\\

$U$ and $W$ are not sorted, and hence there exist $1 \leq i < j \leq n$ such that\\ $|\alpha_{ij}|=|U_{ij}-W_{ij}|>1$. From property 1 we get that $|U_{ij}-W_{ij}|=2$. Recall that $U_{1n}=W_{1n}$, so after appropriate simultaneous rotation of $U,V$ and $W$ (modulo $n$), we can assume that there exists $1 < j < n$ such that $U_{1j}=W_{1j}+2$ (and by property 2 $V_{1j}=W_{1j}+2$ as well, so $U_{1j}=V_{1j}$). From now on, we assume that $U,V$ and $W$ are rotated appropriately, and that $j$ is maximal with respect to this property (that is, there is no $j'>j$ for which $U_{1j'}=W_{1j'}+2$). We can also assume that  $W \cap U \cap V = \emptyset$ (otherwise we could remove the common elements and prove the lemma for the resulting tuples, which implies the claim for the original $k$-tuples as well). In addition, we can also assume that $U \cup V \cup W = [n]$ (since if some $i \in [n]$ appears in neither of them then we could redefine $U,V$, and $W$ to be in ${[n-1]\choose k}$ and ignore this $i$). We divide the proof into a series of claims. Our purpose is to show that the assumption that $T$ is sorted implies $U=V$, which leads to a contradiction.\\

\textbf{Claim 1:}$\{1,j\} \subset U$, $\{1,j\} \subset V$, $\{1,j\} \cap W = \emptyset$.\\

Proof: Assume for contradiction that $1 \in W$. Since $W \cap U \cap V = \emptyset$ then WLOG $1 \notin U$. Therefore $U_{2j}=W_{2j}+3$, contradicting property 1. Similarly $j \notin W$, hence $\{1,j\} \cap W = \emptyset$. Since $U \cup V \cup W = [n]$, we can assume WLOG that $1 \in V$. If $1 \notin U$ then $U_{2j}=W_{2j}+2$, while $V_{2j}=W_{2j}+1$, contradicting property 2. Therefore $1 \in V$, and similarly $j \in V \cap U$, so Claim 1 is proven.\\

\textbf{Claim 2:} For $j < t \leq n$, $(\epsilon_U)_t=(\epsilon_V)_t$.\\

Proof: We prove it by induction on $t$.  For $t=j+1$ property 1 implies that $t \in W$ (otherwise either $V_{1,j+1}=3+W_{1,j+1}$ or $U_{1,j+1}=3+W_{1,j+1}$, a contradiction). If $t \in V$ then since $W \cap U \cap V =\emptyset$ we have $t \notin U$. Therefore,
\begin{center}
$V_{1,j+1}=U_{1,j+1}+1$ and $V_{1,j+1}=W_{1,j+1}+2$,
\end{center}
a contradiction to property 2. Hence $t \notin V$, and similarly $t\notin U$, and the base case of the induction is proven. Assume that the claim holds for all $j+1 \leq t<c$, and let $t=c$. By the inductive assumption $V_{j+1,c-1}=U_{j+1,c-1}$, so applying properties 1 and 2 on $\alpha_{1,c-1}, \beta_{1,c-1}, \alpha_{j+1,c-1}, \beta_{j+1,c-1}$ leads us to the following three cases:
$$\textrm{a) } W_{j+1,c-1}=V_{j+1,c-1}=U_{j+1,c-1},$$
$$\textrm{b) } W_{j+1,c-1}=V_{j+1,c-1}+1=U_{j+1,c-1}+1,$$
$$\textrm{c) } W_{j+1,c-1}=V_{j+1,c-1}+2=U_{j+1,c-1}+2.$$
Case a) contradicts the maximality of $j$, so consider case b). If $c \notin W$ then applying property 2 on $\alpha_{1c}, \beta_{1c}$ implies $c \in U \cap V$, and hence $U_{1c}=W_{1c}+2$- contradicting the maximality of $j$. If $c \in W$ then applying property 2 on $\alpha_{j+1,c}, \beta_{j+1,c}$ implies $c \notin U, c \notin V$, so Claim 2 holds. Let us now consider case c). From property 1 for $\alpha_{j+1,c}, \beta_{j+1,c}$ we must have $c \notin W$, and hence from property 2, $c \in U \cap V$. Thus the claim is proven.\\

\textbf{Claim 3:} For $1 < t \leq j$, $(\epsilon_U)_t=(\epsilon_V)_t$.\\

Proof: We prove it by induction on $t$. The case $t=j$ follows from Claim 1, so assume that the claim is proven for $c < t \leq j$, and let $t=c$. By the inductive hypothesis $U_{c+1,j}=V_{c+1,j}$, and from the proof of Claim 2 it follows that $$\{j+1,j+2\} \subset W, \{j+1,j+2\} \notin U, \{j+1,j+2\} \notin V.$$
Applying properties 1 and 2 on $\alpha_{c+1,j},\beta_{c+1,j},\alpha_{c+1,j+2},\beta_{c+1,j+2}$ leads us to the following options:
$$\textrm{a) } U_{c+1,j}=V_{c+1,j}=W_{c+1,j},$$
$$\textrm{b) } U_{c+1,j}=V_{c+1,j}=W_{c+1,j}+1,$$
$$\textrm{c) } U_{c+1,j}=V_{c+1,j}=W_{c+1,j}+2.$$
First consider case a). In this case we must have $c \in  U \cap V$
and $c \notin W$ (otherwise we get a contradiction when applying properties 1 and 2 on $\alpha_{c,j+2},\beta_{c,j+2}$ ). In case b), if $c \in W$ then the properties of $\alpha_{c,j+2},\beta_{c,j+2}$ imply $c \notin U, c \notin V$. On the other hand, if $c \notin W$ then the properties of $\alpha_{c,j},\beta_{c,j}$ imply $c \in V \cap U$. Finally, in case c) we must have $c \in W, c \notin V, c \notin U$ (by considering $\alpha_{c,j},\beta_{c,j}$), so Claim 3 holds.\\

Combining claims 1,2 and 3 leads us to the conclusion that $\epsilon_U=\epsilon_V$, so $U=V$, a contradiction. Therefore $T$ is not sorted.\\
\end{proof}
We are now ready to present the proof of theorem~\ref{thm:kbynone}.
\begin{proof}
Conditions 1 and 3 are equivalent, as well as conditions 2 and 4. If $W$ is sorted with all the elements in $\mathcal{J}$ then condition 1 holds and we are done. Otherwise, we need to show that $W$ is not sorted with exactly one element in $\mathcal{J}$ (which implies that condition 2 holds). Assume for contradiction that $W$ is not sorted with $U$ and $V$ for some $U,V \in \mathcal{J}$. Since $W$ is a $(2,\mathcal{J})-$largest minor, then there exists $A \in Gr^{+}(k,n)$ such that 1 is the largest value of a Pl\"ucker coordinate in $A$, and $\Delta_I(A)=1$ if and only if $I \in \mathcal{J}$. Moreover, if for some $I\in {[n]\choose k}$ we have $\Delta_W(A)<\Delta_I(A)$, then $I \in \mathcal{J}$ and $\Delta_I(A)=1$. Consider the set $$T=\{U,V,Sort_1(W,V),Sort_2(W,V),Sort_1(W,U),Sort_2(W,U)\}.$$ By Lemma~\ref{lem:kbynone} $T$ is not sorted, and hence $T$ is not contained in $\mathcal{J}$ (since $\mathcal{J}$ is sorted by Theorem~\ref{thm:sorted}). Without loss of generality, $Sort_1(W,U) \notin \mathcal{J}$, so\\ $\Delta_{Sort_1(W,U)} < 1$. Therefore, by Corollary~\ref{cor:Skandera} $$\Delta_W(A)\Delta_U(A) < \Delta_{Sort_1(W,U)}(A)\Delta_{Sort_2(W,U)}(A).$$ Recall that $\Delta_U(A)=1, \Delta_{Sort_2(W,U)}(A) \leq 1$, so
\begin{center}
$\Delta_W(A) < \Delta_{Sort_1(W,U)}(A)\Delta_{Sort_2(W,U)}(A) \leq \Delta_{Sort_1(W,U)}(A)$.
\end{center}
In conclusion
\begin{center}
$\Delta_W(A) < \Delta_{Sort_1(W,U)}(A) < 1$,
\end{center}
contradicting the fact that the value of $\Delta_W(A)$ is second largest among the Pl\"ucker coordinates in $A$.
Therefore $W$ is not sorted with exactly one element in $\mathcal{J}$, and condition 2 holds.
\end{proof}
The theorem above gives a necessary condition on second largest minors. If $\mathcal{J}$ from Theorem~\ref{thm:kbynone} is maximal, we obtain sufficient conditions as well. The following generalizes Theorem~\ref{thm:2byn}.
\begin{theorem}
Let $W \in {[n]\choose k}$ and let $\mathcal{J}\subset{[n]\choose k}$ be some maximal arrangement of largest minors such that $W \notin \mathcal{J}$. The following two statements are equivalent.
\begin{enumerate}
  \item $W$ is a $(2,\mathcal{J})-$largest minor.
  \item There exist a vertex $\mathcal{Q}$ in $\Gamma_{(k,n)}$ that is adjacent to $\mathcal{J}$, such that $W \in \mathcal{Q}$.
\end{enumerate}
In particular, the minors that can be second largest are in bijection with the edges of $\Gamma_{(k,n)}$ that connected to vertex $\mathcal{J}$, and the number of such minors is at most $n$.
\label{thm:kbyn}
\end{theorem}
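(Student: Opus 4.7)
The plan is to combine Theorem~\ref{thm:kbynone}, the maximality hypothesis on $\mathcal{J}$, and the characterization of adjacency in $\Gamma_{(k,n)}$ from Theorem~\ref{thm:adjacency}. The forward implication $(1)\Rightarrow(2)$ should drop out essentially for free from the earlier results; the substance is in the converse $(2)\Rightarrow(1)$, which requires an actual existence argument — producing a point of $Gr^+(k,n)$ whose Pl\"ucker arrangement has $\mathcal{J}$ as the largest class and $W$ sitting at the second-largest level.

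For $(1)\Rightarrow(2)$ I would argue as follows. Apply Theorem~\ref{thm:kbynone}: either $\{W\}\cup\mathcal{J}$ is sorted, or there exists $J\in\mathcal{J}$ such that $(\mathcal{J}\setminus\{J\})\cup\{W\}$ is sorted while $W,J$ are not sorted. The first alternative is ruled out by the maximality of $\mathcal{J}$, since by Theorem~\ref{thm:sorted} a maximal sorted collection has exactly $n$ elements, so no sorted collection of size $n+1$ exists. Hence the second alternative holds; setting $\mathcal{Q}:=(\mathcal{J}\setminus\{J\})\cup\{W\}$ gives a sorted collection of size $n$, which is again maximal by Theorem~\ref{thm:sorted}, so $\mathcal{Q}$ is a vertex of $\Gamma_{(k,n)}$ containing $W$. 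The shared facet $\mathcal{J}\setminus\{J\}=\mathcal{Q}\setminus\{W\}$ makes $\mathcal{J}$ and $\mathcal{Q}$ adjacent.

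For $(2)\Rightarrow(1)$ the set-up is: $\mathcal{Q}=(\mathcal{J}\setminus\{J\})\cup\{W\}$ for some $J\in\mathcal{J}$ (by Theorem~\ref{thm:adjacency}), and we must exhibit a matrix $A\in Gr^+(k,n)$ with $\Delta_I(A)$ constant and maximal for $I\in\mathcal{J}$, with $\Delta_W(A)$ strictly smaller than this common value, and with every remaining minor $\Delta_I(A)$ at most $\Delta_W(A)$. My plan is a continuity/interpolation argument. By Theorem~\ref{thm:sorted}, pick $A_\mathcal{J}\in Gr^+(k,n)$ whose largest minors are exactly $\mathcal{J}$, and $A_\mathcal{Q}\in Gr^+(k,n)$ whose largest minors are exactly $\mathcal{Q}$. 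Choose a continuous path $\gamma:[0,1]\to Gr^+(k,n)$ from $A_\mathcal{J}$ to $A_\mathcal{Q}$ (e.g.\ the straight-line interpolation of representative matrices, rescaled so that one fixed Pl\"ucker coordinate equals $1$ throughout). Along this path every ratio $\Delta_I/\Delta_{I'}$ varies continuously. At $t=0$ we have $\Delta_W<\Delta_{I}$ for every $I\in\mathcal{J}$, while at $t=1$ we have $\Delta_W>\Delta_{J}$. Define $s^*\in(0,1]$ to be the first time some minor outside $\mathcal{J}$ equals the (common) $\mathcal{J}$-value; perturbing $\gamma$ inside the submanifold where all $\mathcal{J}$-minors are kept equal (a codimension $n-1$ condition, which the ambient manifold has room for), one can arrange that at this critical time it is precisely $\Delta_W$ that first hits the ceiling. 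The second-largest level is then $\Delta_W$; any minor $I\notin\mathcal{J}\cup\{W\}$ satisfies $\Delta_I\le\Delta_W$ at $s^*$ by the definition of first hitting time. This places the perturbed point in the desired stratum.

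The main obstacle is rigorously justifying that the critical point can be taken to be of the form ``$\Delta_W$ is the next minor to catch up,'' rather than some other minor $\Delta_I$ with $I\notin\mathcal{J}\cup\{W\}$ overtaking first. I expect this to be handled via Corollary~\ref{cor:Skandera}: for any $I\notin\mathcal{J}\cup\{W\}$, the sorting of $\mathcal{J}\cup\{W\}$ (as a maximal sorted collection of the neighbouring simplex $\mathcal{Q}$) forces any such $I$ to be unsorted with some element of $\mathcal{J}\cup\{W\}$, and then Skandera's inequality gives a strict domination of the form $\Delta_I\cdot\Delta_{\mathrm{sort}_*}<\Delta_{\mathrm{sort}_1}\Delta_{\mathrm{sort}_2}$ that prevents $\Delta_I$ from rising to the $\mathcal{J}$-level before $\Delta_W$ does. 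Once the equivalence $(1)\Leftrightarrow(2)$ is established, the ``bijection'' and ``at most $n$'' statements follow immediately: Theorem~\ref{thm:adjacency} says each edge at $\mathcal{J}$ is encoded by a unique detour in the circuit $C_\mathcal{J}$, each detour contributes a single new $k$-subset $W$, and the number of detours available is bounded by $n$.
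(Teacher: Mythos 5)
Your argument for $(1)\Rightarrow(2)$ is correct and is essentially what the paper does: Theorem~\ref{thm:kbynone} gives the two alternatives, maximality of $\mathcal{J}$ (size exactly $n$, by Theorem~\ref{thm:sorted}) kills the first, and the second produces an adjacent vertex $\mathcal{Q}=(\mathcal{J}\setminus\{J\})\cup\{W\}$.

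The converse $(2)\Rightarrow(1)$, which you correctly identify as the substance, has a genuine gap. Your path argument is circular at exactly the hard point. Along any path from $A_{\mathcal{J}}$ to $A_{\mathcal{Q}}$, after renormalizing at each time so that the $\mathcal{J}$-minors are all equal to $1$ (which is what the torus action of Theorem~\ref{thm:torus_action} does), \emph{every} non-$\mathcal{J}$ minor stays strictly below $1$; there is no ``first hitting time'' inside the positive Grassmannian, and the endpoint $A_{\mathcal{Q}}$ does not lie in the slice $\{\Delta_I=1 \text{ for all } I\in\mathcal{J}\}$ at all. What you actually need is that, somewhere in that slice, $\Delta_W$ dominates every other non-$\mathcal{J}$ minor, and your Skandera step does not deliver this. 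Corollary~\ref{cor:Skandera} only gives inequalities between products; if $I\notin\mathcal{J}\cup\{W\}$ is unsorted with some $M\in\mathcal{Q}$, the resulting inequality $\Delta_I\Delta_M<\Delta_{\mathrm{sort}_1}\Delta_{\mathrm{sort}_2}$ shows $\Delta_I<1$ when $M\in\mathcal{J}$ (which is already known), but it never compares $\Delta_I$ to $\Delta_W$ unless $W$ appears as one of the sorts with the other three factors in $\mathcal{J}$, and this is far from true for all competing $I$. In particular there can be up to $n$ distinct detour minors $W'$ adjacent to $\mathcal{J}$, and no Skandera inequality forces $\Delta_{W'}\le\Delta_W$. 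The paper sidesteps exactly this by a concrete construction: using the three-term Pl\"ucker relation and the detour data $(I_c,I_t,I_d)$ it writes each detour minor on the torus-normalized point as $1$ minus an explicit positive ratio, then chooses the starting matrix $B=C'$ via an $\epsilon$-perturbation of two columns (equation~(\ref{phi})) so that $\Delta_W=1-O(\epsilon^2)$ while all other detour minors are $1-O(\epsilon)$ or $1-O(1)$. Some such quantitative separation is unavoidable, and your sketch does not supply it.
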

%\begin{definition}
%Two vertices $u$, $v \in \Gamma_{(k,n)}$ are of {\it cubical distance},
%denoted $cube\_d$,
%$a$ if one can arrive from $u$ to $v$ by moving along $a$ cubes in
%$\Gamma_{(k,n)}$, and $a$ is minimal with respect to this property.
%\end{definition}
In order to prove this theorem, we need another result from~\cite{main} which deals with the action of the positive torus on the positive Grassmannian.

\begin{definition}
The {\it ``positive torus''\/} $\R_{>0}^n$ acts on the positive Grassmannian $Gr^+(k,n)$ by rescaling the coordinates in $\R^n$. In terms of $k\times n$ matrices this action is given by rescaling the columns of the matrix.
\end{definition}

\begin{theorem}\cite{main}
{\rm (1)}
For any point $A$ in $Gr^+(k,n)$ and any maximal sorted subset $S\subset {[n]\choose k}$, there is a unique
point $A'$ of $Gr^+(k,n)$ obtained from $A$ by the torus action (that is, by rescaling the columns of the $k\times n$
matrix $A$) such that\\
\medskip
\noindent
{\rm (1)}
The Pl\"ucker coordinates $\Delta_I(A')$, for all $I\in S$, are equal to each other.\\
\medskip
\noindent
{\rm (2)}
All other Pl\"ucker coordinates $\Delta_{J}(A')$, $J\not\in S$,
are strictly less than the $\Delta_I(A')$, for $I\in S$.
\label{thm:torus_action}
\end{theorem}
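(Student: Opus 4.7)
The plan is to establish (a) uniqueness of any $A'$ in the torus orbit of $A$ satisfying condition (1), and then (b) existence of the strict inequalities in (2) at this unique $A'$.

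For (a), I pass to logarithmic coordinates $x_i := \log t_i$ and $\pi_I := \log \Delta_I(A)$. Condition (1) becomes the linear system $\epsilon_I \cdot x + \pi_I = C$ for $I \in S$, where $C$ is the logarithm of the common value. Subtracting the equation for a fixed $I_0 \in S$ from all the others yields $n-1$ equations with coefficient rows $\epsilon_I - \epsilon_{I_0}$. By Theorem~\ref{2.1}, the collection $\{\epsilon_I\}_{I \in S}$ consists of the vertices of a maximal simplex $\nabla_S$ in Sturmfels' triangulation of $\Delta_{k,n}$, hence is affinely independent, so these $n-1$ rows are linearly independent in $\mathbb{R}^n$. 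The kernel of the coefficient matrix is therefore the one-dimensional line spanned by $(1,\ldots,1)$, which corresponds to the global rescaling $t_i \mapsto \lambda t_i$ that acts trivially on $Gr(k,n)$. Exponentiating yields a unique $A'$ in the torus orbit of $A$ satisfying (1).

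For (b), let $c := \Delta_I(A')$ for $I \in S$, and put $M := \{J \in {[n]\choose k} : \Delta_J(A') \geq c\}$. The goal is $M = S$. Suppose not and pick $J \in M \setminus S$; since $S$ is maximal sorted, there exists $K \in S$ with $\{K, J\}$ not sorted. Corollary~\ref{cor:Skandera} applied at $A'$ yields
\[
c \cdot \Delta_J(A') \;=\; \Delta_K(A')\,\Delta_J(A') \;<\; \Delta_{K'}(A')\,\Delta_{K''}(A'),
\]
where $K' := Sort_1(K, J)$ and $K'' := Sort_2(K, J)$. If both $K', K''$ lay in $S$, the right-hand side would equal $c^2$, forcing $\Delta_J(A') < c$ and contradicting $J \in M$. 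So at least one of them lies outside $S$, and the inequality then forces its $\Delta$-value to exceed $c$ strictly, producing another element of $M \setminus S$.

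To turn this into a terminating argument I combine it with Theorem~\ref{thm:sorted} applied to $A'$: the top-value set $M_{\max} := \{J : \Delta_J(A') = \max_{J'}\Delta_{J'}(A')\}$ is itself sorted, by the same Skandera computation applied to an allegedly non-sorted pair in $M_{\max}$. In the easy case $\max = c$, we have $S \subseteq M_{\max}$, and the maximality of $S$ as a sorted collection forces $M_{\max} = S$, giving $\Delta_J(A') < c$ for every $J \not\in S$. The main obstacle is the case $\max > c$, where $M_{\max}$ is sorted but disjoint from $S$. My plan there is: pick $J_0 \in M_{\max}$ together with $K \in S$ for which $\{K, J_0\}$ is not sorted; applying the Skandera step above produces an element with $\Delta$-value strictly greater than $c$, and since the global maximum is already attained on $M_{\max}$, this element must lie in $M_{\max}$ itself. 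I then plan to feed $W = K$ together with two suitably chosen $U, V \in M_{\max}$ into Lemma~\ref{lem:kbynone} to obstruct the sortedness of a subset of $M_{\max}$, contradicting the sortedness of $M_{\max}$ established above. The delicate point is selecting $U, V$ so the hypotheses of Lemma~\ref{lem:kbynone} are met at the same time that $W$ is non-sorted with both of them; as a fallback one can invoke the identification of the positive tropical Grassmannian with Sturmfels' triangulation, which guarantees that the log-lift $I \mapsto \pi_I$ induces a regular subdivision refining Sturmfels' triangulation, placing the balanced point $A'$ at the peak of the face $\nabla_S$ with all other lattice heights strictly lower.
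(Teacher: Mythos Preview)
This theorem is quoted from \cite{main} and is not proved in the present paper, so there is no ``paper's own proof'' to compare against. What follows is therefore an evaluation of your argument on its own merits.

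Part (a) is correct: the affine independence of $\{\epsilon_I : I \in S\}$ (guaranteed by Theorem~\ref{2.1}) does force uniqueness of the rescaling up to the all-ones direction, and this simultaneously gives existence of a point satisfying condition~(1).

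Part (b) has a genuine gap in the case $\max > c$. Your claim ``since the global maximum is already attained on $M_{\max}$, this element must lie in $M_{\max}$ itself'' is a non-sequitur: from $c\cdot\max < \Delta_{K'}(A')\,\Delta_{K''}(A')$ you can only conclude that one of $\Delta_{K'}, \Delta_{K''}$ exceeds $c$, not that it equals $\max$. So the Skandera step does not land you back in $M_{\max}$, and the proposed application of Lemma~\ref{lem:kbynone} breaks down before you reach it. Even if it did land there, Lemma~\ref{lem:kbynone} requires $W$ to be non-sorted with \emph{two} elements $U,V$ of $M_{\max}$, which you have not arranged; and its conclusion is that the six-element set $T$ is not sorted, whereas you would need $T\subseteq M_{\max}$ to contradict the sortedness of $M_{\max}$ --- but four of the six elements of $T$ involve sorting with $W=K\in S$ and have no reason to lie in $M_{\max}$.

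Your tropical fallback is in fact the right direction, but as written it is only a slogan. The honest argument is that the map $I\mapsto \log\Delta_I(A)$ extends affinely over each maximal simplex of the Sturmfels triangulation (because every Pl\"ucker coordinate is a positive Laurent polynomial in a cluster adapted to $S$, with the Skandera inequalities forcing the induced regular subdivision to refine the triangulation); equalizing the values on the vertices of $\nabla_S$ then forces the affine function to be constant on that simplex, and strict concavity across facets gives $\Delta_J(A')<c$ for $J\notin S$. If you want to keep the argument self-contained, you should either prove that concavity statement directly or consult the original proof in \cite{main}.
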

We now present the proof of Theorem~\ref{thm:kbyn}
\begin{proof}
Theorem~\ref{thm:kbynone} implies $(1) \Rightarrow (2)$. In order to show that $(2) \Rightarrow (1)$, we should construct an element $A \in Gr^{+}(k,n)$ for which the following 3 requirements hold:
\begin{enumerate}
  \item $\Delta_I(A) \leq 1$ for all $I \in {[n]\choose k}$.
  \item $\Delta_I(A) = 1$ iff $I \in \mathcal{J}$.
  \item $\Delta_W(A) \geq \Delta_I(A)$ for all $I \notin \mathcal{J}$.
\end{enumerate}
By theorems~\ref{thm:sorted} and ~\ref{thm:adjacency} there exists $I_t = \{i_1,i_2,\ldots,i_k\} \in \mathcal{J}$ such that
$$W = \{i_1,\ldots,i'_a,\ldots,i'_b,\ldots,i_k\} \textrm{ for some } a < b \in [n]$$
and $i'_a \neq i'_b$, $i_a-i'_a=i'_b-i_b=\pm 1 (mod \textrm{ } n)$, and also both
$k$-subsets $$I_{c}=\{i_1,\ldots,i'_a,\ldots,i_b,\ldots,i_k\}, I_{d}=\{i_1,\ldots,i_a,\ldots,i'_b,\ldots,i_k\}$$ are in $\J$. Let $B \in Gr^{+}(k,n)$ be some element, and let $B'$ be the element that is obtained from $B$ after multiplying the $i$-th column of $B$ by the variable $\alpha_i$ (for all $1 \leq i \leq n$). Then $$\Delta_W(B')=(\Pi_{j=1}^{a-1} \alpha_{i_j})\alpha_{i'_a}(\Pi_{j=a+1}^{b-1} \alpha_{i_j})\alpha_{i'_b}(\Pi_{j=b+1}^{k} \alpha_{i_j})\Delta_W(B)=$$ $$=\frac{\Delta_{I_c}(B')\Delta_{I_d}(B')}{\Delta_{I_t}(B')}\frac{\Delta_{I_t}(B)\Delta_{W}(B)}{\Delta_{I_c}(B)\Delta_{I_d}(B)}.$$
By Theorem~\ref{thm:torus_action} we can choose the scalars $\{\alpha_i\}_{i=1}^n$ in such a way that $\Delta_I(B')=1$ for all $I \in \mathcal{J}$. Therefore, for such a set of scalars,
$$\Delta_W(B')=\frac{\Delta_{I_t}(B)\Delta_{W}(B)}{\Delta_{I_c}(B)\Delta_{I_d}(B)}.$$
Since $i_a-i'_a=i'_b-i_b=\pm 1 (mod \textrm{ } n)$, assume WLOG that $i'_a=i_a-1, i'_b=i_b+1$. Then using three term Pl\"ucker relations we get
\begin{equation}\label{eqproof}
\Delta_W(B')=1-\frac{\Delta_{i_1,i_2,\ldots,i_{a-1},i_{a}-1,i_a,\ldots,i_k}(B)\Delta_{i_1,i_2,\ldots,i_{b-1},i_{b},i_b+1,\ldots,i_k}(B)}{\Delta_{I_c}(B)\Delta_{I_d}(B)}.
\end{equation}
By Theorems~\ref{thm:adjacency} and~\ref{thm:kbynone}, the second largest minor in $B'$ must be obtained from the circuit of $\mathcal{J}$ by a detour. Hence, in order to show that $\Delta_W$ can be second largest,
it is enough to show that we can choose the initial matrix $B$ in such a way that $\Delta_W(B')$ is the biggest among all the minors obtained by a detour. For this purpose we need to maximize the
RHS of (\ref{eqproof}). Let us choose some $C \in Gr^{+}(k,n)$, and denote by $\{C_i\}_{i=1}^n$ its columns. Let $C' \in Gr^{+}(k,n)$ be an element for which
\begin{equation}\label{phi}
C'_j=\left\{
                     \begin{array}{ll}
                       C_j, & \hbox{if $j \notin \{i_a,i_b+1\}$;} \\
                       C_{i_a-1}+\epsilon C_{i_a}, & \hbox{if $j=i_a$;} \\
                       C_{i_b}+\epsilon C_{i_b+1}, & \hbox{if $j=i_b+1$}
                     \end{array}
                   \right.
\end{equation}
for small $\epsilon$. By setting $B=C'$ and using (\ref{eqproof}) to evaluate $\Delta_W(B')$ (and any other minor that is obtained from a detour) one can verify that $\Delta_W(B')=1-O(\epsilon^2)$ while the other minors (that obtained from a detour) are of the order $1-O(\epsilon)$ or $1-O(1)$. Therefore by choosing $\epsilon$ small enough, we obtained an element
$$A=B' \in Gr^{+}(k,n)$$ that satisfies the requirements stated in the beginning of the proof.

%Let $W \in {[n]\choose k}$ and let $\mathcal{J}\subset{[n]\choose k}$ be some maximal arrangement of largest minors. %The following two statements are equivalent.
%\begin{enumerate}
%  \item $W$ can be $(2,\mathcal{J})-$largest minor.
%  \item There exist a vertex $\mathcal{Q}$ in $\Gamma_{(k,n)}$ that is adjacent to $\mathcal{J}$, such that $W \in %      \mathcal{Q}$.
%\end{enumerate}
%In particular, the minors that can be second largest are in bijection with the edges of $\Gamma_{(k,n)}$ that %connected to vertex $\mathcal{J}$, and the number of such minors is at most $n$.
\end{proof}

\section{Arrangements of $t$-th largest minors}
Theorem~\ref{thm:kbyn} states that when $\mathcal{J}$ is a maximal
sorted set, the second largest minor must appear in one of the
neighbors of $\mathcal{J}$ in $\Gamma_{(k,n)}$. A natural question is
what can be said regarding $t$-th largest minors for general $t$, and
this is the topic of this section. In the first part, we will define
the notion of cubical distance on $\Gamma_{(k,n)}$, and state our
conjecture regarding $(t,\mathcal{J})$-largest minors. In the second
part, we will prove special cases of this conjecture, and also discuss
the structure of a natural partial order on minors. In the third
part, we discuss additional properties of arrangements of $t$-th
largest minors, and among other things show that they must lie within a certain ball in $\R^n$

\subsection{Cubical distance in $\Gamma_{(k,n)}$}
Consider the blue edges in Figure~\ref{graphGamma2n}, and note that they form a square, while the red edges form a 3-dimensional cube. We say that two vertices $\mathcal{J}_1,\mathcal{J}_2$ in $\Gamma_{(k,n)}$ are of \emph{cubical distance} 1 if both of them lie on a certain cube (of any dimension). For example, vertices $a$ and $b$ from Figure~\ref{graphGamma2n} are of cubical distance 1 since both of them lie on a 1-dimensional cube (which is just an edge). similarly, $a$ and $c$ are of cubical distance 1 (both of them lie on a square), as well as $c$ and $d$ (both of them lie on a 3-dimensional cube).
\begin{definition}
Let $\mathcal{J}_1,\mathcal{J}_2\subset{[n]\choose k}$ be maximal sorted collections, and let $W \in {[n]\choose k}$. We say that $\mathcal{J}_1,\mathcal{J}_2$ are of cubical distance $D$, and denote it by $cube_d(\mathcal{J}_1,\mathcal{J}_2)=D$, if one can arrive from $\mathcal{J}_1$ to $\mathcal{J}_2$ by moving along $D$ cubes in $\Gamma_{(k,n)}$, and $D$ is minimal with respect to this property. We say that $W$ is of cubical distance $D$ from $\mathcal{J}_1$, and denote it by $cube_d(\mathcal{J}_1,W)=D$, if for any vertex $\mathcal{J}_2$ in $\Gamma_{(k,n)}$ that contains $W$, $cube_d(\mathcal{J}_1,\mathcal{J}_2) \geq D$, and for at least one such $\mathcal{J}_2$ this inequality becomes equality.
\end{definition}
For example, using the notations of Figure~\ref{graphGamma2n}, $cube_d(a,d)=2$, $cube_d(b,d)=2$, $cube_d(a,e)=3$. We also have $cube_d(a,\{1,4\})=1$ since $\{1,4\} \in f$. Similarly, $cube_d(a,\{2,4\})=2$ since $\{2,4\} \notin b,f,c$, and $\{2,4\} \in d$. It can also be shown that $cube_d(a,\{2,3\})=3$.
\begin{definition}
Let $\mathcal{J}\subset{[n]\choose k}$ be an arrangement of largest minors, and let\\ $W \in {[n]\choose k}$. We say that $W$ is $(\geq t,\mathcal{J})-$largest minor if for any arrangement of minors $\U=(\U_0,\U_1, \dots,\U_l)$ such that $\U_l=\mathcal{J}$ the following holds:\\
$W \notin \U_l,\U_{l-1},\ldots, \U_{l-t+2}$.
\end{definition}
For example, let $\mathcal{J}$ be the maximal sorted set that corresponds to vertex $a$ in Figure~\ref{graphGamma2n}, and let $A \in Gr^+(2,6)$ in which the collection of maximal minors is $\mathcal{J}$. Using Skandera's inequalities (Corollary~\ref{cor:Skandera}), it is possible to show that for such $A$, $\Delta_{16}>\Delta_{14}>\Delta_{24}>\Delta_{23}$. Therefore, $\{2,3\}$ is $(\geq 4,\U_{l})-$largest minor, since $\{2,3\} \notin \U_l,\U_{l-1}, \U_{l-2}$.
\begin{conjecture}\label{conjecture1}
Let $W \in {[n]\choose k}$ and let $\mathcal{J}\subset{[n]\choose k}$ be some maximal arrangement of largest minors such that $W \notin \mathcal{J}$. If $cube_d(W,\mathcal{J})=t$, then $W$ is $(\geq t+1,\mathcal{J})-$largest minor.
\end{conjecture}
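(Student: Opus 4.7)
The plan is to reformulate the conjecture and induct on the position. Since $W\in\mathcal{J}$ forces $cube_d(W,\mathcal{J})=0$, one verifies that the conjecture is logically equivalent to the following implication: \emph{if $W$ is an $(s,\mathcal{J})$-largest minor for some $s\ge 2$, then $cube_d(W,\mathcal{J})\le s-1$}. I would induct on $s$. The base case $s=2$ is exactly Theorem~\ref{thm:kbyn}: a $(2,\mathcal{J})$-largest minor lies in a vertex $\mathcal{Q}$ of $\Gamma_{(k,n)}$ adjacent to $\mathcal{J}$, and adjacent vertices lie on a common $1$-cube.

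For the inductive step, fix $W$ realized as an $(s,\mathcal{J})$-largest minor by some $A\in Gr^{+}(k,n)$ and a stratum $S_{\mathcal{U}}$ with $\mathcal{U}_l=\mathcal{J}$ and $W\in\mathcal{U}_{l-s+1}$. Pick any element $W'$ of the second-largest class $\mathcal{U}_{l-1}$; by the base case it belongs to a neighbor $\mathcal{J}'$ of $\mathcal{J}$ in $\Gamma_{(k,n)}$. The heart of the proof is to produce a point $A'\in Gr^{+}(k,n)$ at which $\mathcal{J}'$ is the arrangement of largest minors and at which $W$ occupies position at most $s-1$. A natural construction combines Theorem~\ref{thm:torus_action} with a perturbation in the style of~(\ref{phi}): rescale the columns of $A$ along the direction of the exchange $\mathcal{J}\leftrightarrow\mathcal{J}'$ and use the explicit Pl\"ucker-ratio expression analogous to~(\ref{eqproof}) to verify that $W$ rises in the ordering no faster than $W'$. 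Granted this, the inductive hypothesis applied to the pair $(W,\mathcal{J}')$ gives $cube_d(W,\mathcal{J}')\le s-2$, and concatenating with the $1$-cube from $\mathcal{J}$ to $\mathcal{J}'$ yields $cube_d(W,\mathcal{J})\le s-1$.

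The hardest part is tightening the above argument from graph distance to cubical distance. The concatenation as sketched only shows that the graph distance in $\Gamma_{(k,n)}$ is at most $s-1$, which is weaker than the cubical bound the conjecture demands. To close this gap one must exploit the fact that when several distinct second-largest minors $W'_1,\ldots,W'_m\in\mathcal{U}_{l-1}$ produce distinct neighbors $\mathcal{J}'_1,\ldots,\mathcal{J}'_m$ of $\mathcal{J}$, the corresponding detours commute in $G_{k,n}$, so that $\mathcal{J},\mathcal{J}'_1,\ldots,\mathcal{J}'_m$ span a common $m$-dimensional cube in $\Gamma_{(k,n)}$. Establishing this commutativity lemma for the detours attached to coexisting second-largest classes, and threading it through the induction so that a whole layer of the ordering collapses into a single cube-step rather than several edge-steps, is the crux of the conjecture. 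A secondary technical point is that the rescaling produced by Theorem~\ref{thm:torus_action} is global and may reorder many minors at once; handling this precisely will likely require either a continuous $1$-parameter limiting argument along a curve in $\overline{S_{\mathcal{U}}}$, or a careful bookkeeping of all Pl\"ucker-ratios involving $W$ in the spirit of the proof of Theorem~\ref{thm:kbyn} but iterated along a chain of detours.
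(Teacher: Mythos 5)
Note first that Conjecture~\ref{conjecture1} is precisely that: a conjecture. The paper does not prove it in full; it establishes only the special cases of Theorem~\ref{conjecturespecial} ($t\le 3$ for all $n,k$, and $k=2$ for all $n,t$) and Theorem~\ref{finaltheorem} ($W$ sorted with at least one element of $\mathcal{J}$). Moreover the paper's approach to those cases is unrelated to your induction on rank: it passes to the circuit triangulation, uses Lemma~\ref{lemmpath} to show that the vertices of $C_{\mathcal J}$ unsorted with $W$ form a contiguous arc, builds an explicit oriented Young subgraph of $G_{k,n}$ with $W$ at the origin and outer boundary inside $\mathcal{J}$, and then iterates three-term Pl\"ucker relations along the grid (Lemma~\ref{zigzaglemmaone}, Corollary~\ref{corzagdef}, Lemma~\ref{boundcubic}); the $t=3$ case of Theorem~\ref{conjecturespecial} is a finite case analysis.

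As for your sketch, the part you flag as ``the crux'' is not actually where the difficulty lies: every edge of $\Gamma_{(k,n)}$ is a $1$-cube, so cubical distance never exceeds graph distance, and a chain of $s-1$ edges from $\mathcal{J}$ to a maximal simplex containing $W$ would already yield $cube_d(W,\mathcal{J})\le s-1$, which is exactly your reformulated conclusion. The ``commutativity of detours'' lemma you call for is superfluous for the bound you need. The real missing step is earlier, in the inductive step itself: you do not justify that there exists $A'\in Gr^{+}(k,n)$ at which $\mathcal{J}'$ is the arrangement of largest minors and simultaneously $W$ has rank at most $s-1$. The torus rescaling of Theorem~\ref{thm:torus_action} that normalizes $\mathcal{J}'$ is essentially unique and globally reorders the Pl\"ucker coordinates, and the $\epsilon$-perturbation of~(\ref{phi}) in the proof of Theorem~\ref{thm:kbyn} only controls the minors reached by a single detour, not the whole tail of the ordering. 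Without a proof that moving to the neighbor $\mathcal{J}'$ strictly improves the rank of $W$, the induction does not close --- and this is exactly the obstruction that leaves the conjecture open.
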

Note that the examples we gave earlier are special cases of this conjecture. For example, $cube_d(a,\{2,3\})=3$, and indeed $\{2,3\}$ is $(\geq 4,\U_{l})-$largest minor.
In many cases, we can prove this conjecture. Our main results in this section are Theorems~\ref{conjecturespecial} and~\ref{finaltheorem}, both of them validate the conjecture for wide class of cases.
\begin{theorem}\label{conjecturespecial}
Conjecture~\ref{conjecture1} holds for $t=2,3$ (and any $n$ ,$k$), and also for $k=2$ (and any $n$, $t$).
\end{theorem}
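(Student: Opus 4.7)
The plan is to prove each of the three assertions by a common peeling strategy: if $W$ is $(t, \mathcal{J})$-largest, then after passing to an adjacent vertex $\mathcal{J}^{(1)}$ of $\mathcal{J}$ and torus-rescaling the realizing matrix, $W$ becomes $(t-1, \mathcal{J}^{(1)})$-largest, and the cubical distances add in a controlled way so that $cube_d(W, \mathcal{J}) \leq 1 + cube_d(W, \mathcal{J}^{(1)})$. The $t=2$ case is the base of this peeling and follows immediately from Theorem~\ref{thm:kbyn}: every edge-neighbor of $\mathcal{J}$ has cubical distance one, so if $cube_d(W, \mathcal{J}) = 2$ then $W$ lies in no vertex adjacent to $\mathcal{J}$; Theorem~\ref{thm:kbyn} then forces $W$ not to be $(2, \mathcal{J})$-largest, and combined with $W \notin \mathcal{J}$, we obtain that $W$ is $(\geq 3, \mathcal{J})$-largest.

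For the $t=3$ case, suppose $W$ is $(3, \mathcal{J})$-largest, realized by $A \in Gr^+(k,n)$ with $\Delta_I(A) = 1$ for $I \in \mathcal{J}$, $\Delta_I(A) = c_2 < 1$ for $I$ in the second-largest arrangement $\mathcal{Y}$, and $\Delta_W(A) = c_3 < c_2$. The goal is to produce a maximal sorted set $\mathcal{J}^*$ containing $W$ with $cube_d(\mathcal{J}, \mathcal{J}^*) \leq 2$, in four steps. First, I would refine the proof of Theorem~\ref{thm:kbynone} via iterated use of Corollary~\ref{cor:Skandera} and Lemma~\ref{lem:kbynone} to show that $W$ is unsorted with at most two elements of $\mathcal{J}$ and that the relevant $Sort$-pairs lie inside $\mathcal{Y}$. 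Second, I would choose an adjacent vertex $\mathcal{J}^{(1)}$ of $\mathcal{J}$ whose defining detour is compatible with these $Sort$-pairs and rescale $A$ via Theorem~\ref{thm:torus_action} into a point $A'$ with largest minors $\mathcal{J}^{(1)}$. Third, using the explicit three-term Pl\"ucker identity~(\ref{eqproof}) and the perturbative construction~(\ref{phi}) from the proof of Theorem~\ref{thm:kbyn}, I would verify that $W$ becomes second-largest in $A'$, so that Theorem~\ref{thm:kbyn} places $W$ in a vertex $\mathcal{J}^{(2)}$ adjacent to $\mathcal{J}^{(1)}$. Finally, Theorem~\ref{thm:adjacency} tells us the two successive detours $\mathcal{J} \to \mathcal{J}^{(1)} \to \mathcal{J}^{(2)}$ act on disjoint index triples, hence commute and fit in a common square face of $\Gamma_{(k,n)}$, giving $cube_d(\mathcal{J}, \mathcal{J}^{(2)}) \leq 2$.

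For the $k=2$ case, maximal sorted sets are maximal thrackles on $[n]$, and adjacency is governed by the four-point condition of Theorem~\ref{thm:2byn}(4). The plan is induction on $t$, with base case $t \leq 3$ supplied by the previous parts. For the inductive step, given $W$ being $(t, \mathcal{J})$-largest, I would apply the peeling strategy above; in the thrackle setting the peeling succeeds verbatim, because each detour is an edge-swap whose effect on non-incident edges is null, ensuring that $W$ descends to being $(t-1, \mathcal{J}^{(1)})$-largest in the rescaled matrix. By the inductive hypothesis, $cube_d(W, \mathcal{J}^{(1)}) \leq t-2$, and after a standard check that the resulting chain of $t-1$ detours can be grouped into commuting blocks (using the cyclic structure of thrackles together with Theorem~\ref{thm:2byn}(4)), we conclude $cube_d(W, \mathcal{J}) \leq t-1$.

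The main obstacle will be the third step of the $t=3$ argument: after torus rescaling one must ensure that $W$ genuinely emerges as second-largest in $A'$, rather than being overtaken by some element of $\mathcal{Y}$ lying outside $\mathcal{J}^{(1)}$. Achieving this requires a careful choice of $\mathcal{J}^{(1)}$ driven by the Skandera constraints near $W$, together with a finely tuned perturbation $\epsilon$ in the construction of $B$ via~(\ref{phi}) so that the $O(\epsilon^2)$ versus $O(\epsilon)$ separation from the proof of Theorem~\ref{thm:kbyn} still isolates $W$. In the $k=2$ case, the corresponding difficulty---packaging the peeled detours into an honest cube rather than a mere path of edges---is resolved using the rigid four-point form of Theorem~\ref{thm:2byn}(4) and the cyclic symmetry of the thrackle.
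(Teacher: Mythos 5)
Your argument for the case $t=2$ is correct and matches the paper in substance: you observe that if $cube_d(W,\mathcal{J})\geq 2$ then $W$ lies in no vertex adjacent to $\mathcal{J}$, and then invoke the necessity direction of Theorem~\ref{thm:kbyn} (the paper cites Theorem~\ref{thm:kbynone} for the same implication, but for maximal $\mathcal{J}$ these amount to the same thing) to conclude $W$ is not a $(2,\mathcal{J})$-largest minor, hence is $(\geq 3,\mathcal{J})$-largest.

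However, the engine behind your $t=3$ and $k=2$ arguments — the ``peeling strategy'' — has a genuine and central gap. You assert that if $W$ is $(t,\mathcal{J})$-largest as realized by $A$, then after torus-rescaling $A$ into $A'$ so that an appropriately chosen adjacent $\mathcal{J}^{(1)}$ becomes the maximal arrangement, $W$ will be $(t-1,\mathcal{J}^{(1)})$-largest. This does not follow from anything you cite. Torus rescaling by Theorem~\ref{thm:torus_action} multiplies each $\Delta_I$ by a product of column scalars, and while it equalizes the minors indexed by $\mathcal{J}^{(1)}$ at the top, it reshuffles the relative order of \emph{all} the other minors; there is no reason $W$ should now sit in the second-largest stratum rather than being ``overtaken'' by some $Y\in\mathcal{Y}$ that happens to rescale favorably. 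You identify this precisely as ``the main obstacle'' yourself, but the proposed repair via the perturbative construction~(\ref{phi}) does not close it: (\ref{phi}) is used in the proof of Theorem~\ref{thm:kbyn} to \emph{build} one special matrix showing a given $W$ \emph{can} be second-largest; it says nothing about what happens to a preexisting matrix $A$ under rescaling, and you are not free to replace $A$ by a constructed matrix, since you need the conclusion for the given $A$. Moreover, to apply~(\ref{phi}) one already needs to know that $W$ lies in a neighbor of $\mathcal{J}^{(1)}$, which is exactly what step~3 is trying to establish — so that route is circular. (By contrast, your step~4 for $t=3$ is overkill: once you have a two-edge path $\mathcal{J}\to\mathcal{J}^{(1)}\to\mathcal{J}^{(2)}$, the cubical distance is automatically at most 2 since every edge is a $1$-cube; no commuting of detours is needed. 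The same remark disposes of the ``commuting blocks'' check in your $k=2$ step.)

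The paper avoids this difficulty entirely. For $k=2$ it notes that any $W=\{a,b\}$ must be sorted with some element of a maximal thrackle $\mathcal{J}$ (since every vertex of $[n]$ is covered by some edge of a maximal thrackle, and two edges sharing a vertex are sorted), and then applies Theorem~\ref{finaltheorem}, whose proof is the oriented-Young-subgraph machinery of Lemmas~\ref{zigzaglemmaone}--\ref{lemmpath}: one builds a planar grid subgraph of $G_{k,n}$ whose origin is $W$, bounds $cube_d(\mathcal{J},W)$ by the swapping distance using Lemma~\ref{boundcubic}, and bounds the rank of $\Delta_W$ by the same quantity via Corollary~\ref{corzagdef}. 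For $t=3$ the paper instead runs a finite case analysis on the local circuit configuration (8 cases via Claim~\ref{claimcubical}), handling six of them with Lemma~\ref{zigzaglemmaone} and the last two by chaining explicit three-term Pl\"ucker inequalities. If you want to rescue a peeling-type argument you would need a lemma controlling how the full ordering of minors transforms under the rescaling of Theorem~\ref{thm:torus_action}, which the paper does not supply and which does not appear to be straightforward.
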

\begin{theorem}\label{finaltheorem}
If $W$ is sorted with at least one element in $\mathcal{J}$, then Conjecture~\ref{conjecture1} holds.
\end{theorem}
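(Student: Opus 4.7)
My plan is to prove the contrapositive form of the theorem by induction on the position $s$ of $W$, namely: if $W \in \U_{l-s+1}$ in some arrangement with $\U_l = \mathcal{J}$ and $W$ is sorted with some $J \in \mathcal{J}$, then $cube_d(W,\mathcal{J}) \leq s-1$. The base case $s=2$ is precisely Theorem~\ref{thm:kbyn}: a $(2,\mathcal{J})$-largest minor must lie in a maximal sorted collection adjacent to $\mathcal{J}$ in $\Gamma_{(k,n)}$, so its cubical distance to $\mathcal{J}$ is at most $1$.

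For the inductive step, fix $s \geq 3$ and a realization $A \in Gr^+(k,n)$ normalized so that $\Delta_I(A)=1$ for all $I \in \mathcal{J}$. Since $\mathcal{J}$ is maximal sorted and $W \notin \mathcal{J}$, there exists some $V \in \mathcal{J}$ that is not sorted with $W$. Corollary~\ref{cor:Skandera} then gives
$$\Delta_W(A) \;=\; \Delta_W(A)\Delta_V(A) \;<\; \Delta_{Sort_1(W,V)}(A)\,\Delta_{Sort_2(W,V)}(A),$$
and because both factors on the right are at most $1$, at least one of $Sort_1(W,V),Sort_2(W,V)$ --- call it $W^*$ --- satisfies $\Delta_{W^*}(A) > \Delta_W(A)$, so $W^*$ sits at some position $s' \leq s-1$ in the arrangement.

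The crux of the argument is to choose $V$ so that $W^*$ itself satisfies the hypotheses of the theorem and is cubically close to $W$. The plan is to fix a shortest cubical path $\mathcal{J}=\mathcal{J}_0,\mathcal{J}_1,\ldots,\mathcal{J}_t$ from $\mathcal{J}$ to a maximal sorted set containing $W$, and to take $V$ to be an element of $\mathcal{J}_0$ removed during the first cube transition $\mathcal{J}_0 \to \mathcal{J}_1$; the assumption that $W$ is sorted with $J$ lets us demand $V \neq J$. Using the detour description of adjacency in Theorem~\ref{thm:adjacency}, I would match the replacement $k$-subsets appearing in $\mathcal{J}_1 \setminus \mathcal{J}_0$ against $Sort_1(W,V),Sort_2(W,V)$ to verify two things simultaneously: that $W^*$ is sorted with at least one element of $\mathcal{J}$ (so the inductive hypothesis applies to $W^*$), and that $W^*$ belongs to a maximal sorted set within cubical distance $t-1$ of $\mathcal{J}$ sharing a cube with a maximal sorted set containing $W$. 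The latter yields the triangle-style bound $cube_d(W,\mathcal{J}) \leq cube_d(W^*,\mathcal{J})+1$.

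Granting these two properties, the inductive hypothesis applied to $W^*$ gives $cube_d(W^*,\mathcal{J}) \leq s-2$, and combining with the bound above produces $cube_d(W,\mathcal{J}) \leq s-1$, closing the induction. The main obstacle is the combinatorial step of matching the Skandera sort partners $Sort_1(W,V),Sort_2(W,V)$ to the detour data describing the first cube of the shortest path: one must show that the resulting $W^*$ genuinely inherits sortedness with a $\mathcal{J}$-element and lies in the correct cube. The hypothesis that $W$ is sorted with some $J \in \mathcal{J}$ is essential rather than cosmetic here --- it both allows $V \neq J$ to be chosen and provides an anchor in $\mathcal{J}$ that lets the sortedness propagate to $W^*$. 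Without such an anchor, neither the choice of $V$ nor the control of $cube_d(W^*,\mathcal{J})$ has an obvious substitute, which is presumably why the full Conjecture~\ref{conjecture1} remains open.
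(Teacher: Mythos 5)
Your base case ($s=2$) is fine, and the reformulation of Conjecture~\ref{conjecture1} as a bound on cubical distance in terms of position in the arrangement is a valid move. But the inductive step, which you yourself flag as the ``main obstacle,'' is a plan rather than a proof, and the two claims you defer are exactly where the difficulty lies. First, the triangle-style bound $cube_d(W,\mathcal{J}) \le cube_d(W^*,\mathcal{J})+1$ has no justification: cubical distance is defined via maximal sorted collections containing the given minor, and there is no a priori reason that $W$ and $W^* = Sort_i(W,V)$ lie in maximal sorted collections that share a cube, or even that $W$ and $W^*$ are sorted with each other (in general $|W_{ij} - Sort_1(W,V)_{ij}|$ can be $\ge 2$ when $|W_{ij}-V_{ij}|\ge 3$). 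Second, and related, the claim that $W^*$ ``inherits sortedness with a $\mathcal{J}$-element'' is asserted but not established; the Skandera sort of $W$ with an arbitrary $V\in\mathcal{J}$ has no obvious reason to be sorted with any fixed $J\in\mathcal{J}$. There is also a mismatch in your choice of $V$: you want to select $V$ using the first cube transition of a shortest cubical path so as to control $cube_d(W^*,\mathcal{J})$, but the Skandera inequality only tells you that \emph{at least one} of $Sort_1(W,V), Sort_2(W,V)$ has a strictly larger Pl\"ucker coordinate, and nothing guarantees that the one which is higher in the arrangement is also the one you need geometrically.

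The paper's proof takes a fundamentally different, constructive route. It uses Lemma~\ref{lemmpath} to show that the vertices of the minimal circuit $C_\mathcal{J}$ not sorted with $W$ form a contiguous arc, then explicitly builds an oriented Young subgraph $H$ of $G_{k,n}$ whose outer boundary lies in $\mathcal{J}$ and whose origin vertex is $W$. Two separate lemmas then do the work your triangle inequality was meant to do: Lemma~\ref{zigzaglemmaone} propagates $3$-term Pl\"ucker relations along the grid of $H$ to produce a descending chain of minors of length equal to the swapping distance (so $W$ is $(\ge s{+}1,\mathcal{J})$-largest), while Lemma~\ref{boundcubic} shows that the swapping distance dominates $cube_d(\mathcal{J},W)$ by peeling off corner detours one cube at a time. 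Crucially, the chain of comparisons in Lemma~\ref{zigzaglemmaone} runs along \emph{edges} of $G_{k,n}$, which automatically keeps each consecutive pair sorted and each step within a single cube --- exactly the control your Skandera-based induction lacks. The hypothesis that $W$ is sorted with at least one element of $\mathcal{J}$ is used in the paper precisely to anchor the endpoints $A,B$ of the path in Lemma~\ref{lemmpath}, not to choose a Skandera partner. To make your approach rigorous you would essentially have to re-prove the grid structure anyway, at which point you might as well argue directly as the paper does.
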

At first glance, it may seem like Theorem~\ref{conjecturespecial} contradicts Theorem~\ref{thm:kbyn} since a vertex $\mathcal{J}_2$ of cubical distance 1 from $\mathcal{J}$ doesn't have to be connected to $\mathcal{J}$. However, using Theorem~\ref{thm:adjacency}, it can be easily shown that if $W \in \mathcal{J}_2$ then $W$ also appears in one of the neighbors of $\mathcal{J}$.
\subsection{Partially ordered set of minors}
In this part, we show that arrangements of largest minors induce a
structure of a partially ordered set on the entire collection of minors. The investigation of this poset leads us to the proof of Theorem~\ref{finaltheorem}. We conclude this part with the proof of Theorem~\ref{conjecturespecial}.
\begin{example}\label{examplepos}
Let $k=2,n=6$, and let $A \in Gr^+(2,6)$ be an element for which the minors that appear in Figure~\ref{lastexample} on the left are maximal. Thus, without loss of generality, we can assume that
\begin{center}
$\Delta_{12}=\Delta_{13}=\Delta_{14}=\Delta_{15}=\Delta_{25}=\Delta_{26}=1$.
\end{center}
\begin{figure}[h]
\centering
\includegraphics[height=1.2in]{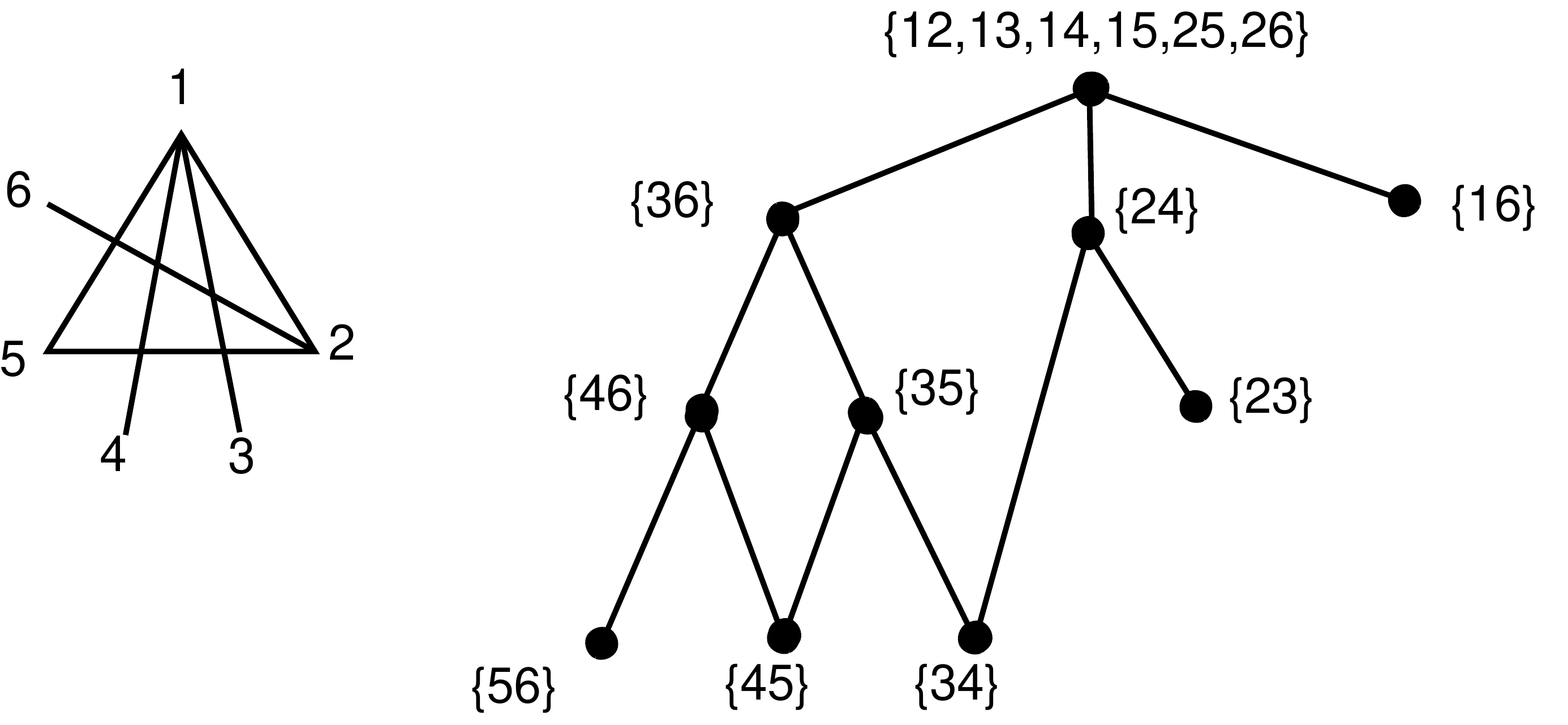}
\caption{A maximal thrackle and the corresponding poset of minors}
\label{lastexample}
\end{figure}
By Theorem~\ref{thm:sorted}, all the other minors are strictly smaller than 1. However, there is much more information
that we can obtain on the order of the minors. For example, using 3-term Pl\"ucker relations, we get $\Delta_{46}\Delta_{13}<\Delta_{14}\Delta_{36}$, and hence $\Delta_{46}<\Delta_{36}$. Once the set of largest minors is fixed, it induces a partial order on the entire collection of minors. Figure~\ref{lastexample} depicts the Hasse diagram that corresponds to the example above (and the relation $\Delta_{46}<\Delta_{36}$ is one of the covering relations in this diagram).

%In particular, if $\U=(\U_0,\U_1, \dots,\U_l)$ is an arrangement of minors for which $\U_0=\emptyset$, and $\U_l$ %corresponds to the thrackle in Figure~\ref{lastexample}, then the diagram in Figure~\ref{lastexample} implies
%$$
%\{4,6\},\{3,5\},\{2,3\} \notin \U_{l-1} ,\hspace{1.5in} \{5,6\},\{4,5\},\{3,4\} \notin \U_{l-2},U_{l-1}.
%$$
%Therefore, $\{5,6\}$ can be at most $(4,\U_{l})-$largest minor (that is, it cannot be $(2,\U_{l})-$largest minor or %$(3,\U_{l})-$largest minor).

\end{example}
%For example, $\{5,6\}$ from Example~\ref{examplepos} is $(\geq 4,\U_{l})-$largest minor.
In order to discuss these partially ordered sets more systematically, and to prove Theorem~\ref{finaltheorem}, we will use the circuit triangulation of the hypersimplex, introduced in section~\ref{sec:Triangulations}. The structure of $G_{k,n}$ is quite complicated in general. Yet, we found an algorithm that recognizes certain planar subgraphs of $G_{k,n}$ which induce the partial order.
\begin{definition}\label{maindefinition}
An \emph{oriented Young graph} is the graph that is obtained from a Young diagram after rotating it in 180 degrees and orienting each horizontal edge from left to right and each vertical edge from bottom to top.
We call the vertex that is in the lower right corner the \emph{origin vertex}, and denote the upper right
(lower left) vertex by $v_1$ ($v_0$). There are two paths that start at $v_0$, continue along the border and end at
$v_1$. The path that passes through the origin vertex is called \emph{inner path}, and the second path is called
\emph{outer path}. From now on, we denote the set of the vertices appear in the outer path by $V$.
See Figure~\ref{gridgraph} for an example.
\begin{figure}[h]
\centering
\includegraphics[height=1.25in]{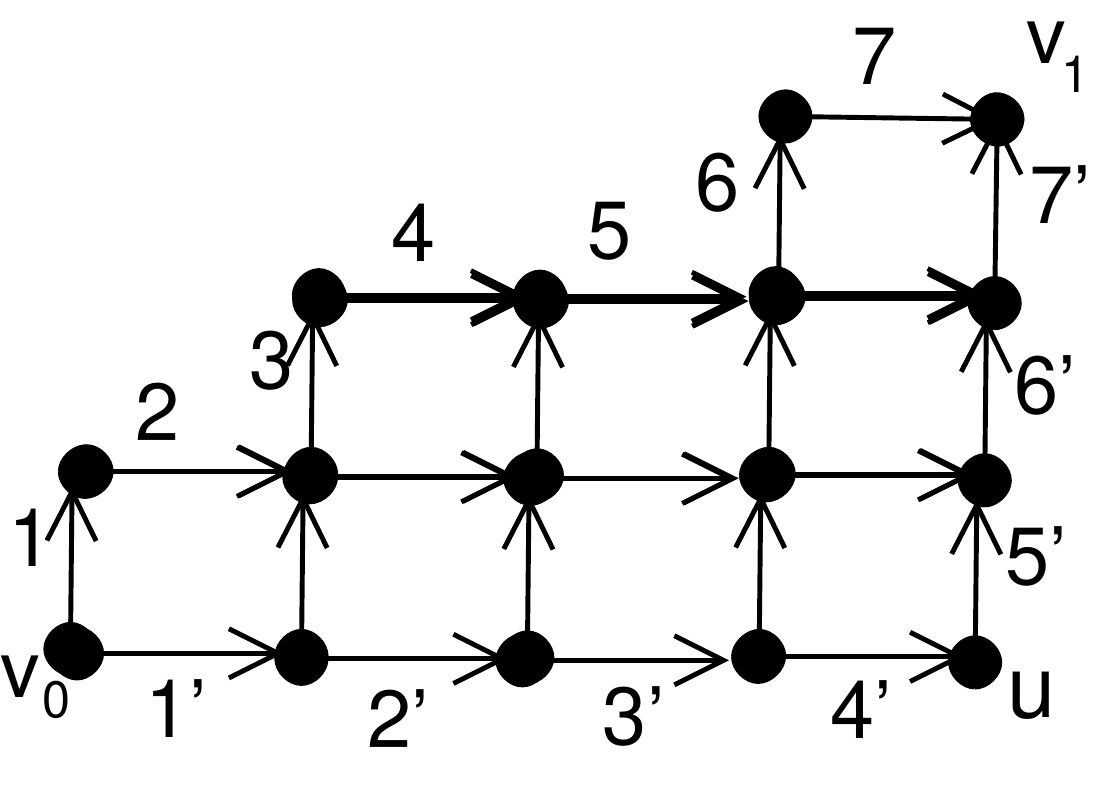}%
\caption{An oriented Young graph. Its inner boundary path is formed by the edges labeled from 1' through 7'.
Its outer boundary path is formed by the edges labeled from 1 through 7, and all the vertices that appear along the latter path form the collection V.}
\label{gridgraph}
\end{figure}
\end{definition}
\begin{lemma}\label{zigzaglemmaone}
Let $H$ be an oriented Young subgraph of $G_{k,n}$, and let $T \in Gr^+(k,n)$ for which all
the minors indexed by $V$ are equal and have largest value. Then for any vertex $D$ of $H$ such that
$D \notin V$, we have $$\Delta_D(T)<\Delta_C(T) \textrm{ and }\Delta_D(T)<\Delta_A(T),$$ where $C$ is the vertex right above
$D$ and $A$ is the vertex to the left of $D$ in $H$ (see Figure~\ref{square}).
\end{lemma}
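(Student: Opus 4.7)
My plan is to prove the lemma by induction on the vertices $D \in H \setminus V$, processed in the order of decreasing $y$-coordinate and, within each row, increasing $x$-coordinate. Under this ordering, when we reach $D$ the three relevant vertices $A$ (left of $D$), $C$ (above $D$), and $B$ (above $A$, equivalently left of $C$) have each either been processed earlier or lie in $V$, in which case any comparison of the form $\Delta_{(\cdot)}(T)<\Delta_V(T)$ is automatic because the $V$-minors are strictly the largest values.

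The main algebraic tool is the three-term Pl\"ucker relation for a unit square of $H \subset G_{k,n}$. Every such square has the shape $A = S\cup\{p,q\}$, $D = S\cup\{p+1,q\}$, $B = S\cup\{p,q+1\}$, $C = S\cup\{p+1,q+1\}$ for some $(k-2)$-subset $S$ and indices $p+1<q$, since the horizontal and vertical edges of the Young graph correspond to shifts of two distinct $1$'s in the $0/1$-vector. Pl\"ucker then yields
\[
\Delta_A\,\Delta_C \;=\; \Delta_B\,\Delta_D \;+\; \Delta_{S\cup\{p,p+1\}}\,\Delta_{S\cup\{q,q+1\}},
\]
and strict positivity of the Pl\"ucker coordinates of $T \in Gr^+(k,n)$ upgrades this to the strict inequality $\Delta_A(T)\,\Delta_C(T) > \Delta_B(T)\,\Delta_D(T)$ for every unit square of $H$.

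I next establish two combinatorial facts about oriented Young graphs. First, whenever $D\notin V$ the three neighbors $A$, $C$, and $B$ all lie in $H$: otherwise the right-aligned staircase structure of $H$ would force the outer path to pass through $D$ itself. Second, and more subtly, if $D\notin V$ and at least one of $A, C$ lies in $V$, then $B$ must also lie in $V$. This uses the monotonicity of the outer path (every edge goes right or up): having $C\in V$ forces its predecessor on the path to be $B$ rather than $D$, and symmetrically $A\in V$ forces its successor to be $B$ rather than $D$.

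These inputs combine to give the inductive step cleanly in two cases. If $B\in V$, then $\Delta_B(T)$ is maximal while $\Delta_A(T)$ and $\Delta_C(T)$ are at most maximal, so the Pl\"ucker inequality immediately implies $\Delta_D(T)<\min(\Delta_A(T),\Delta_C(T))$. If $B\notin V$, then by the contrapositive of the second observation both $A$ and $C$ are outside $V$, so the inductive hypothesis applies: for $A$ it yields $\Delta_A(T)<\Delta_B(T)$ (with $B$ playing the role of $A$'s upper neighbor), and for $C$ it yields $\Delta_C(T)<\Delta_B(T)$ (with $B$ playing the role of $C$'s left neighbor); substituting both into the Pl\"ucker inequality produces $\Delta_D(T)<\Delta_A(T)$ and $\Delta_D(T)<\Delta_C(T)$. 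The main obstacle I foresee is a careful verification of the two combinatorial observations about the outer path, especially near staircase corners where the shape degenerates; once these are pinned down, the Pl\"ucker-plus-induction machinery flows through uniformly.
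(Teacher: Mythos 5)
Your proof is correct and uses the same core machinery as the paper: the three-term Pl\"ucker relation on a unit square of $H$ together with an inward induction from the outer path $V$. The Pl\"ucker identity you write, $\Delta_A\Delta_C=\Delta_B\Delta_D+\Delta_{S\cup\{p,p+1\}}\Delta_{S\cup\{q,q+1\}}$, is the correct one for a unit square, and both of your combinatorial observations about oriented Young graphs are valid: $D\notin V$ is equivalent to the cell to the upper-left of $D$ belonging to the shape, which gives $A,B,C\in H$; and since the outer path is monotone (each step goes up or left toward $v_1$), if $A\in V$ then the path must leave $A$ upward (as $D\notin V$), forcing $B\in V$, and symmetrically for $C$.

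Where you diverge from the paper is the organization of the induction, and your version is tighter. The paper inducts on $d(D,V)$, the sum of vertical and horizontal graph distances from $D$ to $V$, with base case $d=2$ (where $A,B,C\in V$). In the inductive step it asserts the hypothesis applies to all of $A,B,C$ and concludes $\Delta_A<\Delta_B$ and $\Delta_C<\Delta_B$; but when $d(D,V)>2$ with horizontal (or vertical) distance equal to $1$, one of $A,C$ actually lies in $V$, where the strict inequality $\Delta_A<\Delta_B$ is replaced by equality and the inductive hypothesis does not literally apply. The paper's chain of inequalities survives after relaxing $<$ to $\leq$ at the right spot, but it isn't spelled out. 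Your dichotomy on $B\in V$ versus $B\notin V$ eliminates this issue cleanly: when $B\in V$ you argue directly from $\Delta_B=1\geq\Delta_A,\Delta_C$ and Pl\"ucker (subsuming the paper's base case in one stroke), and when $B\notin V$ your second observation guarantees $A,C\notin V$, so the inductive hypothesis is unambiguously available. Net effect: same argument, but a sharper case split that closes a small gap in the original exposition.
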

\begin{figure}[h]
\centering
\includegraphics[height=0.7in]{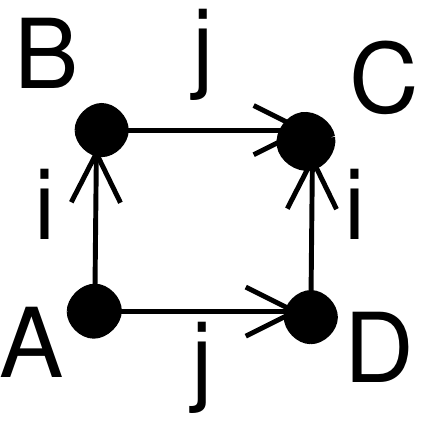}%
\caption{}
\label{square}
\end{figure}
Before presenting the proof of the lemma, we would like to present the proof idea of Theorem~\ref{finaltheorem} using the running example depicted in Figure~\ref{cubegraph}.
\begin{figure}[h]
\centering
\includegraphics[height=4.1in]{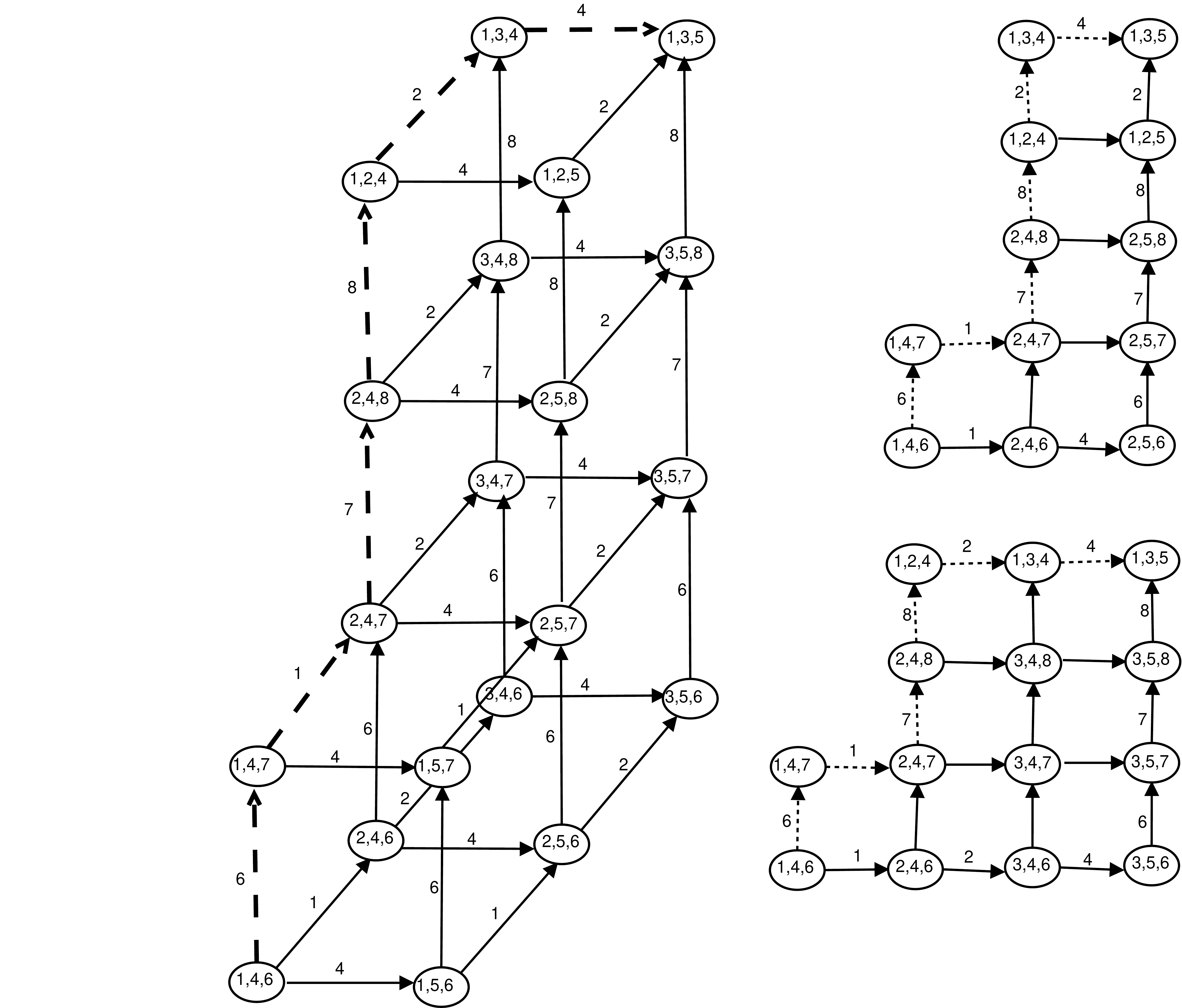}
\caption{The graph on the left is $Q_1$. The graph on the top right is $Q_2$, and the graph on the bottom right is $Q_3$.}
\label{cubegraph}
\end{figure}
The proof will show that under the conditions of the theorem, one can find an oriented
Young subgraph of $G_{k,n}$ such that $W$ is the origin vertex and $V \subset \J$. Then we apply
Lemma~\ref{zigzaglemmaone} and obtain an ordering on the minors. As an example,
suppose that the minors corresponding to the circuit $C_{\omega}$ in Figure~\ref{circuit38}
form an arrangement $\J$ of largest minors, and let $W=(3,5,6)$. One can verify that $cube_d(\mathcal{J},W)\leq 4$. Among the vertices of $C_{\omega}$, $W$ is sorted with $\{1,3,5\}, \{1,4,5\}, \{1,4,6\}$, and not sorted with the rest. So the set of vertices that are not sorted with $W$ form a path in $C_{\omega}$
(and this property also holds in the general case as we will show in Lemma~\ref{lemmpath}). We would like to construct an alternative path in $G_{3,8}$ that starts at $ \{1,4,6\}$, ends at $\{1,3,5\}$, passes through $W$, and contains only vertices that are sorted with $W$. Consider the left graph $Q_1$ that appears in Figure~\ref{cubegraph}.
$Q_1$ is a subgraph of the graph $G_{3,8}$, and the edges that correspond to the circuit $C_{\omega}$ appear
as dotted lines. The part of $\omega$ that corresponds to the dotted lines is $617824$ (we ignore the vertex $\{1,4,5\}$, as it is sorted with $W$). Consider the path that starts at $\{1,4,6\}$ and continues along the edges labeled by $124678$. Note that after 3 steps in this path, we arrive to the vertex $W$. $Q_3$ (see Figure~\ref{cubegraph}) is the oriented Young subgraph of $G_{3,8}$ in which the set $V$ consists of vertices from $C_{\omega}$ and $W$ is the origin vertex. One can check that this is indeed a subgraph of $Q_1$. Applying Lemma~\ref{zigzaglemmaone} we get $$\Delta_{3,5,6}<\Delta_{3,4,6}<\Delta_{3,4,7}<\Delta_{3,4,8}<\Delta_{1,3,4}.$$
This implies that $W$ is $(\geq 4+1,\mathcal{J})-$largest minor, as guaranteed by Theorem~\ref{finaltheorem}.
Similar claim holds in the case $W=(2,5,6)$, with the corresponding oriented Young subgraph $Q_2$.

The proof of Theorem~\ref{finaltheorem} will be based on several lemmas. We start by presenting the proof of Lemma~\ref{zigzaglemmaone}.
\begin{proof}
Without loss of generality we can assume that $\Delta_M(T)=1$ for all $M \in V$, and that 1 is the largest
minor of $T$. Consider the subgraph of $H$ that looks like the graph in Figure~\ref{square}. Then the labelings of its edges
(the labeling that induced from $G_{k,n}$) must look as in the figure.
The proof is by induction on the distance $d$ of the vertex $D$ from
the vertices in $V$, when distance is defined as the sum of vertical and horizontal path from the vertex to the vertices of
$V$. We denote this distance by $d(D,V)$. For example, the distance of vertex $u$ from $V$ in Figure~\ref{gridgraph} is 3+4=7, as the vertical path
has 3 edges and the horizontal path has 4 edges. The base case of the induction is the case $d=2$. In such a case, $A,B,C \in V$.
Moreover, using the labelings in Figure~\ref{square}, $A,B,C,D$ are of the form:
$$A=\{a_1,a_2,\ldots,a_m,i,a_{m+2},\ldots,a_p,j,a_{p+2},\ldots,a_k\},$$
$$B=\{a_1,a_2,\ldots,a_m,i+1,a_{m+2},\ldots,a_p,j,a_{p+2},\ldots,a_k\},$$
$$C=\{a_1,a_2,\ldots,a_m,i+1,a_{m+2},\ldots,a_p,j+1,a_{p+2},\ldots,a_k\},$$
$$D=\{a_1,a_2,\ldots,a_m,i,a_{m+2},\ldots,a_p,j+1,a_{p+2},\ldots,a_k\}.$$
Applying 3-term Pl\"ucker relation we get $\Delta_D(T)\Delta_B(T)<\Delta_A(T)\Delta_C(T)$, and since $A,B,C \in V$ we have
$\Delta_D(T)<1$. This implies $\Delta_D(T)<\Delta_C(T)$ and\\ $\Delta_D(T)<\Delta_A(T)$, so we are done with the base case. Suppose now that the distance
is $d=d(D,V)>2$. Clearly $$d(D,V)<d(A,V), d(D,V)<d(C,V) \textrm{ and } d(D,V)<d(B,V),$$ so we can apply the inductive hypothesis on $A,B$ and $C$, and get\\ $\Delta_C(T)<\Delta_B(T), \Delta_A(T)<\Delta_B(T)$. Hence
$$\Delta_D(T)\Delta_B(T)<\Delta_A(T)\Delta_C(T)<\Delta_B(T)\Delta_C(T),$$
so $\Delta_D(T)<\Delta_C(T)$. Similarly,
$$\Delta_D(T)\Delta_B(T)<\Delta_A(T)\Delta_C(T)<\Delta_A(T)\Delta_B(T)$$
so $\Delta_D(T)<\Delta_A(T)$ and we are done.
\end{proof}

Given an oriented Young graph $H$ and a vertex $w \in H$, we denote the position of $w$ in $H$ by $(i,j)$
where $i$ and $j$ start at 0 and the origin vertex corresponds to $(0,0)$. For example, in Figure~\ref{gridgraph} the position of $v_1$ is $(3,0)$, the position of $v_0$ is $(0,4)$ and
the position of $u$ is $(0,0)$. In the following section, we sometimes refer to a vertex directly by its position.
\begin{definition}\label{zigzagdef}
Let $H$ be an oriented Young subgraph of $G_{k,n}$, and let $u$ be the origin vertex.
The \emph{swapping distance between $u$ and $V$} is $\max\{i+j-1 | (i,j) \in H\}$.
\end{definition}
For example, the swapping distance of $u$ from $V$ in Figure~\ref{gridgraph} is 4, and it obtained by taking the vertex that is
incident to both edges 3 and 4.
\begin{corollary}\label{corzagdef}
Let $H,V,u$ be as in Lemma~\ref{zigzaglemmaone}, and denote by $t$ the swapping distance of $u$ from $V$.
Let $\U_{l}\subset{[n]\choose k}$ be an arrangement of largest minors such that $V \subset \U_{l}$. Then $u$ is $(\geq t+1,\U_{l})-$largest minor.
\end{corollary}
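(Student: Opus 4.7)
The plan is to iterate Lemma~\ref{zigzaglemmaone} along a monotone up-and-left path in $H$ of length exactly $t$ that starts at $u$ and enters $V$ only at its final vertex. Suppose we can produce such a chain $u=D_0\to D_1\to\cdots\to D_t$ with $D_0,\ldots,D_{t-1}\in H\setminus V$ and $D_t\in V$. Applying Lemma~\ref{zigzaglemmaone} at each $D_i$ with $0\leq i\leq t-1$ gives $\Delta_{D_i}(T)<\Delta_{D_{i+1}}(T)$, which concatenates to the strictly increasing chain $\Delta_u(T)<\Delta_{D_1}(T)<\cdots<\Delta_{D_t}(T)=\Delta_V(T)$. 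This exhibits $t$ pairwise distinct minor values strictly exceeding $\Delta_u(T)$, the topmost equal to the common maximum value on $V$. These $t$ values occupy $t$ distinct layers of $\U$ above $u$'s layer, with the highest such layer being $\U_l$, so $u$ sits in a layer of index at most $l-t$. Equivalently $u\notin \U_l\cup\U_{l-1}\cup\cdots\cup\U_{l-t+1}$, which is exactly the statement that $u$ is $(\geq t+1,\U_l)$-largest.

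The remaining task is to produce such a length-$t$ path. By the definition of the swapping distance there exists $(i^*,j^*)\in H$ with $i^*+j^*=t+1$. Moreover, from any interior vertex of $H$ one can always take an up- or left-step staying inside $H$, so the maximum of $i+j$ on $H$ is attained on the outer boundary $V$; we may therefore assume $(i^*,j^*)\in V$. I would then identify $(i^*,j^*)$ as the deepest concave corner of the outer path; consequently both of its outer-path neighbors $(i^*-1,j^*)$ and $(i^*,j^*-1)$ also lie in $V$. The concrete L-shaped path
\[u\to(0,1)\to\cdots\to(0,j^*-1)\to(1,j^*-1)\to\cdots\to(i^*-1,j^*-1)\to(i^*-1,j^*)\]
has length $(j^*-1)+(i^*-1)+1=t$ and ends at the $V$-vertex $(i^*-1,j^*)$. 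Its interior vertices all lie either on the bottom row strictly between $u$ and $v_0$ (hence on the inner path, not on $V$) or strictly below the top of column $j^*-1$ (interior to $H$, not on $V$), so the chain has the required form.

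The main obstacle is justifying in full generality that the interior of the L-path avoids $V$. This reduces to checking, by a short Young-diagram combinatorics argument, that the $(i^*-1)$-st row of the diagram has width at least $j^*$ (which follows from $(i^*,j^*)$ being a concave corner of the outer path) and that column $j^*-1$ has height at least $i^*$, so that each listed column-vertex is a genuine vertex of $H$ lying strictly below the outer boundary. The symmetric ``up-then-left'' L-shape running through $(i^*,j^*-1)$ can absorb any degenerate configuration in which the above construction happens to meet $V$ prematurely; carrying out this short case analysis is the chief bookkeeping step.
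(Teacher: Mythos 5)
Your core idea — produce a monotone up-and-left chain of length $t$ from $u$ to $V$ and iterate Lemma~\ref{zigzaglemmaone} along it to get $t$ distinct values strictly above $\Delta_u(T)$ — is exactly what drives the paper's proof, and your deduction from such a chain to the ``$(\geq t+1,\U_l)$-largest'' conclusion is correct. The difference is in how the chain is obtained. The paper argues by induction on the swapping distance: if $(s,1)$ or $(1,s)$ lies in $H$ it walks straight up the right column (or straight left along the bottom row, by symmetry) to finish; otherwise it deletes the bottom row of vertices, passes to the induced oriented Young subgraph $R$ rooted at $(1,0)$, notes that $R$ has swapping distance $s-1$ and $V_R\subset V$, and recurses. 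Unrolled, this produces an L-shaped chain much like yours, but the induction sidesteps the shape case-analysis entirely.

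Your explicit construction, by contrast, leaves the chief step unverified, and this is where the gap is. You need $(i^*-1,j^*)\in V$; that holds exactly when $(i^*-1,j^*+1)\notin H$, which in turn holds only if you pick $(i^*,j^*)$ to be the maximizer of $i+j$ with the \emph{smallest} first coordinate $i^*$ (equivalently largest $j^*$). You do not pin down this choice — ``deepest concave corner'' is ambiguous when the antidiagonal $i+j=t+1$ meets $H$ in several points — and with a different tie-break the claim $(i^*-1,j^*)\in V$ can fail (for instance if $(i^*,j^*)=v_1=(m,0)$, the neighbor $(i^*-1,j^*)=(m-1,0)$ sits on the inner path). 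Once the extremal choice is fixed, the verification that the interior of your L-path avoids $V$ goes through cleanly: for $0\le j\le j^*-1$ both $(1,j)$ and $(0,j+1)$ lie in $H$ since $j<j^*\le\lambda_{i^*}\le\lambda_1$, and for $1\le i\le i^*-1$ both $(i+1,j^*-1)$ and $(i,j^*)$ lie in $H$ since $\lambda_{i+1}\ge\lambda_{i^*}\ge j^*$. You flag this bookkeeping and gesture at a symmetric fallback, but do not carry it out, so as written the proposal is incomplete; supplying the extremal choice of $(i^*,j^*)$ and the two displayed verifications would close the argument and make it a valid alternative to the paper's induction.
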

\begin{proof}
We will prove it by induction on the swapping distance $s$. If it equals 1, then the claim follows immediately. If $s>1$ then there are two options:
\begin{enumerate}
  \item At least one of the points $(1,s), (s,1)$ are in $H$.
  \item Both points $(1,s), (s,1)$ are not in $H$.
\end{enumerate}
Consider case 1. Applying Lemma~\ref{zigzaglemmaone} and assuming WLOG that $(s,1) \in H$, we get
$$\Delta_u < \Delta_{(1,0)}< \Delta_{(2,0)}< \ldots< \Delta_{(s,0)}\leq 1.$$
Therefore $u$ is $(\geq s+1,\U_{l})-$largest minor. \\
Let us now consider case 2. Denote by $(i_1,j_1)$ the vertex in $H$ that maximizes $\{i+j-1 | (i,j) \in H\}$.
Since neither $(s,1)$ nor $(1,s)$ are in $H$, we have $i_1 < s, j_1 < s$. Denote the vertex in position $(1,0)$ by $A$,
and consider the induced subgraph $R$ of $H$ in which $A$ is the origin.
Clearly we have $$\max\{i+j-1 | (i,j) \in R\} = i_1+j_1-2=s-1,$$ so by the inductive hypothesis $A$ is $(\geq s,\U_{l})-$largest minor.
By Lemma~\ref{zigzaglemmaone},\\ $\Delta_u < \Delta_A$, and hence $u$ is $(\geq s+1,\U_{l})-$largest minor, and we are done.
\end{proof}
Our next lemma relates the swapping distance with the cubical distance, defined in the beginning of this section.
\begin{lemma}\label{boundcubic}
Let $\J\subset{[n]\choose k}$ be a maximal sorted collection, and suppose that there exists an oriented
Young subgraph $H$ of $G_{k,n}$ such that $V \subset \J$. Let $u$ be the origin vertex in $H$.
Then $cube_d(\J,u)$ is bounded from above by the swapping distance of $u$ from $V$.
\end{lemma}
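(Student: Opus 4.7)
The plan is to proceed by induction on the swapping distance $s$ of $u$ from $V$, with each inductive step corresponding to peeling off a single outer corner of the Young shape of $H$. Each peel will be realized as one detour in the minimal circuit $C_{\J}$, i.e.\ a single edge ($1$-cube) in $\Gamma_{(k,n)}$, so $s$ peels contribute at most $s$ to the cubical distance.

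For the base case $s=1$, the only vertex of $H$ with $i+j-1=1$ is $(1,1)$, so $H$ is a single $2\times 2$ face square with origin $u=(0,0)$ and outer path $V=\{(1,0),(1,1),(0,1)\}\subset \J$. I will check that the triple $((1,0),(1,1),(0,1))$ satisfies the detour hypotheses of Theorem~\ref{thm:adjacency} with $I_t=(1,1)$, $I'_t=u$, and $\{I_c,I_d\}=\{(1,0),(0,1)\}$, so that $(\J\setminus\{(1,1)\})\cup\{u\}$ is a maximal sorted collection adjacent to $\J$ in $\Gamma_{(k,n)}$, yielding $cube_d(\J,u)\leq 1$.

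For the inductive step with $s\geq 2$, I will pick an outer corner $v^{\star}=(p,q)$ of the Young shape of $H$ achieving $p+q-1=s$. Its two outer-path neighbors $A:=(p-1,q)$ and $B:=(p,q-1)$ lie in $V$, while the fourth vertex $d:=(p-1,q-1)$ of the face square at $v^{\star}$ lies strictly inside $H$. Since $v^{\star}, A, B$ occur consecutively along the outer path of $H$ and hence consecutively in $C_{\J}$, Theorem~\ref{thm:adjacency} produces the adjacent maximal sorted collection $\J':=(\J\setminus\{v^{\star}\})\cup\{d\}$. Letting $H'$ be the oriented Young subgraph obtained from $H$ by removing the corner cell at $v^{\star}$, its outer path becomes $V':=(V\setminus\{v^{\star}\})\cup\{d\}\subset \J'$, its origin is still $u$, and its swapping distance is $s-1$. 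Applying the inductive hypothesis to $(\J',u,H')$ then yields $cube_d(\J',u)\leq s-1$, so $cube_d(\J,u)\leq 1+(s-1)=s$.

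The main obstacle I anticipate is verifying that $v^{\star}, A, B$ genuinely appear consecutively in the minimal circuit $C_{\J}$ and not merely on the outer path of $H$; this requires matching the edge-labels of the subgraph $H\subset G_{k,n}$ with those of $C_{\J}$, invoking the uniqueness of the minimal circuit realizing a maximal sorted collection (Theorem~\ref{circuitthm}) together with the containment $V\subset \J$. A routine companion check is that removing an outer corner cell preserves the Young shape and locally replaces the two outer-path edges at $v^{\star}$ by the two inner-path edges at $d$, so that $H'$ is itself a legitimate oriented Young subgraph of $G_{k,n}$ with origin $u$.
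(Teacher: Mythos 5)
Your inductive step does not close: removing a single corner cell at $v^{\star}=(p,q)$ with $p+q-1=s$ does not, in general, reduce the swapping distance to $s-1$. Several outer corners of $H$ can sit on the same maximal diagonal $i+j-1=s$, and peeling just one of them leaves the others, so $\max\{i+j-1\}$ stays at $s$. Concretely, for the shape $(4,3,1)$ of Figure~\ref{gridgraph} the corners $(1,4)$ and $(2,3)$ both satisfy $i+j-1=4$; deleting only the cell at $(1,4)$ produces the shape $(3,3,1)$, which still has swapping distance $4$, so the assertion that $H'$ ``has swapping distance $s-1$'' is false. A one-cell-at-a-time recursion therefore costs you on the order of the number of boxes in the Young diagram (which can be as large as roughly $s^2/2$), not $s$, and the induction on $s$ cannot be applied.

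The paper's proof differs at exactly this point: it performs detours at \emph{all} corner vertices of $V$ in a single round. These detours are pairwise compatible in the sense of Claim~\ref{claimcubical}(1) (no two of the corresponding triples place either $I_t$ in their intersection, since corner vertices are never adjacent along $V$), so the whole collection traverses a single cube of $\Gamma_{(k,n)}$ and hence costs one unit of cubical distance. And because every vertex realizing $\max\{i+j-1\}$ is automatically a corner, clearing all corners at once removes the entire top diagonal and the swapping distance genuinely drops by one. This simultaneity, i.e.\ that a whole batch of compatible detours is a single cubical move rather than one move per detour, is the key idea your proposal misses. Your base case and the identification of a corner square with a legal detour via Theorem~\ref{thm:adjacency} are fine; also, a minor inaccuracy: when $A=v_0$ (or $B=v_1$) that vertex leaves $H'$ entirely, so the true outer path of $H'$ is $V'$ with that vertex deleted, although the containment $V'\subset\J'$ you actually need still holds.
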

Before presenting the proof of this lemma, we would like to clarify the relationship between circuit triangulation and cubical distance. Let $C_p$
and $C_q$ be two minimal circuits. By Theorem~\ref{circuitthm}, the vertices of each one of the circuits form a maximal sorted collection.
We denote these collections by $\mathcal{P}$ and $\mathcal{Q}$
respectively. We leave it as an exercise for the reader to check
that the following claim holds (see also Figure~\ref{detourcircuit}):
\begin{claim}\label{claimcubical}
\begin{enumerate}
  \item $cube_d(\mathcal{P},\mathcal{Q})=1$ if and only if $C_q$ is obtained from $C_p$ by making a set of different detours
$\{{I_c}^i,{I_t}^i,{I_d}^i\}_{i=1}^m$ such that for every pair $1 \leq i < j \leq m$, neither ${I_t}^i$ nor ${I_t}^j$ lie in the intersection\\ $\{{I_c}^i,{I_t}^i,{I_d}^i\} \cap \{{I_c}^j,{I_t}^j,{I_d}^j\}$.
  \item $cube_d(\mathcal{P},\mathcal{Q})=t$ if and only if $C_q$ is obtained from $C_p$ by a sequence of $t$ steps, each one of them
of the form described in (1), such that $t$ is minimal with regard to this property.
\end{enumerate}
\end{claim}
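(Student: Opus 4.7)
My plan is to prove the claim by translating the cube structure of $\Gamma_{(k,n)}$ into the detour language provided by Theorem~\ref{thm:adjacency}. Every edge of $\Gamma_{(k,n)}$ incident to a maximal sorted collection $\mathcal{P}$ corresponds bijectively to a detour $\{I_c,I_t,I_d\}$ applicable to the circuit $C_p$, namely the detour that replaces $I_t$ by the unique $I'_t$ determined by $I_c,I_d$. The proof reduces to understanding when several such detours can be ``performed simultaneously'' to produce a cube.

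The key step I would carry out is a commutation lemma: two detours $\tau_1=\{I_c^1,I_t^1,I_d^1\}$ and $\tau_2=\{I_c^2,I_t^2,I_d^2\}$ applicable to $C_p$ generate a square (a $2$-cube) in $\Gamma_{(k,n)}$ if and only if $I_t^1\notin\{I_c^2,I_t^2,I_d^2\}$ and $I_t^2\notin\{I_c^1,I_t^1,I_d^1\}$. The reason is that performing $\tau_i$ only removes $I_t^i$ from the collection (while inserting some other element); hence $\tau_j$ remains applicable afterwards exactly when none of $I_c^j,I_t^j,I_d^j$ coincides with $I_t^i$, and symmetrically. In particular, the condition forces $I_t^1\ne I_t^2$, so the two detours change distinct entries and may be applied in either order, producing the antipodal vertex of the square.

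Next I would extend this to arbitrary $m$-cubes. An $m$-dimensional cube in $\Gamma_{(k,n)}$ containing $\mathcal{P}$ is equivalent to a family of $m$ detours applicable to $C_p$ that are pairwise compatible in the sense of the commutation lemma; the $2^m$ vertices of the cube are obtained by applying each subset of these detours, and pairwise compatibility is sufficient because the obstruction is pairwise (each detour only deletes its own $I_t$). The forward direction of part (1) now follows: if $cube_d(\mathcal{P},\mathcal{Q})=1$ then they lie on a common cube, and by restricting to the sub-cube spanned by the coordinate directions in which they differ, $\mathcal{P}$ and $\mathcal{Q}$ become antipodal vertices of some cube; reading off these $m$ detours gives precisely the family described in the claim. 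Conversely, any family of detours satisfying the stated condition generates such a cube, so $cube_d(\mathcal{P},\mathcal{Q})\le 1$, with equality whenever $\mathcal{P}\ne\mathcal{Q}$.

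Part (2) then follows by induction on $t$ using part (1): a sequence of $t$ consecutive cubes realizing the cube-distance corresponds to $t$ rounds of simultaneous detours, and minimality of $t$ in the cubical distance translates directly to minimality in the number of rounds. The main obstacle is the commutation lemma: one must check carefully both that the detour triples' intersection condition is necessary (e.g., that sharing a pivot, or having a pivot appear as an endpoint of the other triple, truly prevents the square from forming in $\Gamma_{(k,n)}$) and that $m$ pairwise compatible detours genuinely produce $2^m$ distinct maximal sorted collections, rather than collisions between different subsets—this independence again reduces to the pairwise condition together with the fact that each detour changes exactly one vertex of the collection.
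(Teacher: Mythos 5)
The paper never proves Claim~\ref{claimcubical}: the authors explicitly leave it as an exercise for the reader (with Figure~\ref{detourcircuit} as an illustration), so there is no official argument to compare yours against, and I can only assess your outline on its own terms. Your route is the natural one: edges of $\Gamma_{(k,n)}$ at $\mathcal{P}$ correspond to detours by Theorem~\ref{thm:adjacency}, and the stated intersection condition is exactly pairwise compatibility, $I_t^i\notin\{I_c^j,I_t^j,I_d^j\}$ and $I_t^j\notin\{I_c^i,I_t^i,I_d^i\}$. The sufficiency half is sound as sketched: for any subset $S$ of pairwise compatible detours, all of $I_c^j,I_t^j,I_d^j$ survive in $\mathcal{P}_S:=\mathcal{P}\setminus\{I_t^i : i\in S\}\cup\{(I_t^i)' : i\in S\}$, so Theorem~\ref{thm:adjacency} applies at every step, and the $2^m$ collections are automatically distinct (a worry you flag, but it is immediate: the pivots $I_t^i$ are pairwise distinct, each replacement element lies outside the current collection because the theorem outputs an adjacent maximal simplex with $n$ vertices, so $\mathcal{P}_S$ is determined by $S$). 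Likewise, the necessity of the intersection condition for the product square can be closed with the ``only if'' half of Theorem~\ref{thm:adjacency}: if, say, $I_t^1=I_c^2$, then $\mathcal{P}_{\{1,2\}}$ cannot be a maximal sorted collection, since it would be adjacent to $\mathcal{P}_{\{1\}}$ and the theorem would force $I_c^2\in\mathcal{P}_{\{1\}}$. Part (2) then follows from part (1) and the definition of cubical distance, as you say.

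The one substantive soft spot is the converse direction of part (1), which you yourself call the main obstacle but do not close. You assume that if $\mathcal{P}$ and $\mathcal{Q}$ lie on a common cube, then that cube carries the product structure generated by the detours labelling its edges at $\mathcal{P}$, so that $\mathcal{Q}$ is reached by applying a subset of them. The paper only introduces ``cube'' by pointing at squares and cubes in Figure~\ref{graphGamma2n}; if one adopts the reading that a cube \emph{is} the configuration generated by independent detours, your converse is essentially definitional and the proposal is complete. But if a cube is taken to mean an arbitrary subgraph of $\Gamma_{(k,n)}$ isomorphic to a hypercube graph, you must rule out configurations such as a $4$-cycle $\mathcal{P},\mathcal{A},\mathcal{R},\mathcal{B}$ whose opposite edges are not the same exchange (for instance $\mathcal{A}=\mathcal{P}-I+I'$, $\mathcal{B}=\mathcal{P}-I+J'$, $\mathcal{R}=\mathcal{P}-I-M+I'+J'$), and no argument for this rigidity appears in your outline. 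So either state explicitly that you take cubes to be the detour-generated ones, or supply that missing step.
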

\begin{figure}[h]
\centering
\includegraphics[height=1.25in]{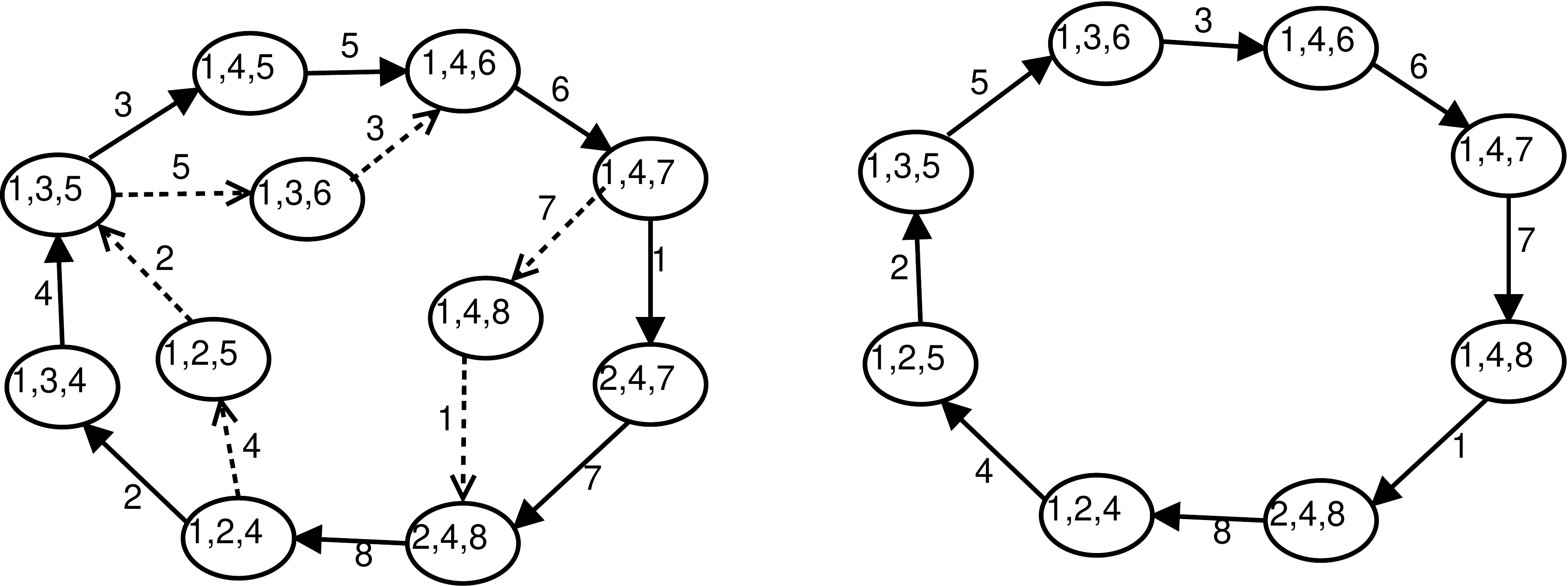}
\caption{The figure on the left is a circuit in $G_{3,8}$ which we have already seen before. There are 3 detours depicted
in dotted lines, and the circuit to the right is the circuit that is obtained by these detours. These two minimal
circuits correspond to a pair of maximal sorted sets of cubical distance 1.}
\label{detourcircuit}
\end{figure}
We will now prove Lemma~\ref{boundcubic}
\begin{proof}
Denote by $s$ the swapping distance of $u$ from $V$. In order to prove this lemma,
we need to show that there exists a maximal sorted collection $\I\subset{[n]\choose k}$
such that $u \in \I$ and such that there exists a sequence of $s$ moves that connects between $\I$ and $\J$ as
described in Claim~\ref{claimcubical} (so each of these moves
corresponds to a certain set of detours). Consider the set of all corner vertices $\{w_i\}_{i=1}^g$ in $V$ ($w$ is a corner vertex if neither the vertex
above $w$ nor the vertex to the left of $w$ is in $V$). Each such corner vertex corresponds to a vertex $B$ in a square as in
Figure~\ref{square}. So we can make a detour that exchanges the arcs $A \rightarrow B$ and $B \rightarrow C$
with the arcs $A \rightarrow D$ and $D \rightarrow C$. Those detours satisfy the requirement in Claim~\ref{claimcubical}
so we can make all of them at the same time. The resulting oriented Young graph has swapping distance $s-1$, so after applying
this process $s$ times we get a maximal sorted set $\I$ that contains $u$ (note that $\I$ and $\J$ are identical on all the vertices
outside $V$), and that completes the proof. See Figure~\ref{sequenceproof} for an example.
\end{proof}
\begin{figure}[h]
\centering
\includegraphics[height=2in]{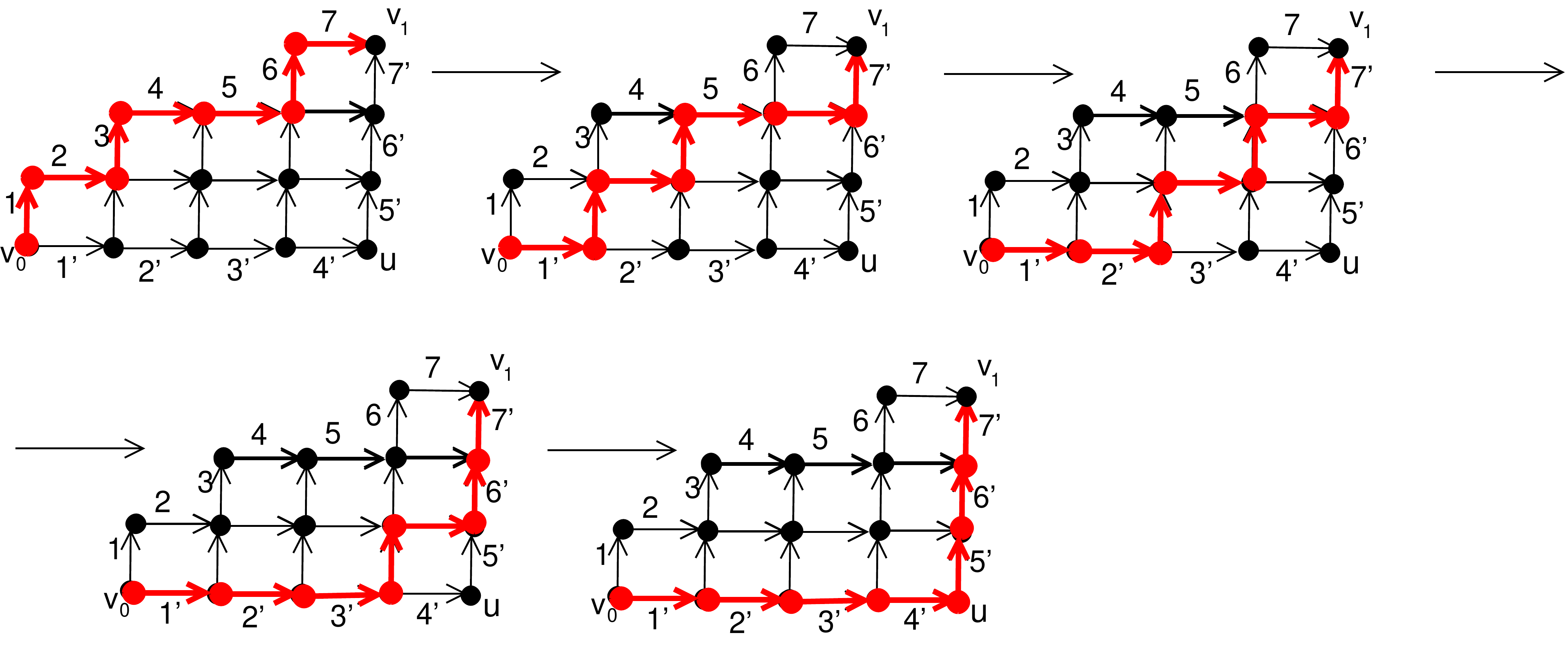}
\caption{The description of the sequence from the proof of Lemma~\ref{boundcubic}}
\label{sequenceproof}
\end{figure}
Our last lemma deals with induced paths in minimal circuits.
\begin{lemma}\label{lemmpath}
Let $C_\omega$ be a minimal circuit in $G_{k,n}$ and let $W \in {[n] \choose k}$, such that $W \notin C_\omega$. Let $B$ be the set of vertices of
$C_\omega$ that are sorted with $W$. Then the induced subgraph of $B$ in $C_\omega$ is a path (which might be empty).
\end{lemma}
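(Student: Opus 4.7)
The plan is to encode sortedness via partial sums and then exploit that in a minimal circuit each position of $[n]$ is the source of exactly one shift. For $I, W \in {[n] \choose k}$, define $s_q = \sum_{p=1}^q\bigl((\epsilon_I)_p - (\epsilon_W)_p\bigr)$ for $q = 0, 1, \ldots, n$; since $s_0 = s_n = 0$ and $|I \cap [a,b]| - |W \cap [a,b]| = s_b - s_{a-1}$ for every cyclic arc $[a, b] \subseteq [n]$, sortedness is equivalent to $\max_q s_q - \min_q s_q \leq 1$, a partial-sum reformulation of the arc characterization implicit in the proof of Lemma~\ref{lem:kbynone}. The key dynamical observation is: a non-wrap shift from position $p \in [n-1]$ to $p+1$ changes only $s_p$, decreasing it by $1$, while the wrap shift from $n$ to $1$ increases every $s_q$ with $q \in [n-1]$ by $1$ and leaves $s_n$ fixed. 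Since the edge labels of $C_\omega$ form a permutation of $[n]$, each $s_q$ (for $q \in [n-1]$) moves up exactly once (at the wrap shift) and down exactly once (at the $q$-shift), oscillating between two consecutive integers $a_q := s_q^{(0)}$ and $a_q + 1$.

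After cyclically rotating $C_\omega$ so that the wrap shift occupies its first edge, one obtains the closed form $s_q^{(t)} = a_q + \mathbf{1}[1 \leq t \leq T_q]$ for $t \in \{0, 1, \ldots, n-1\}$, where $T_q \in [1, n-1]$ is the time of the $q$-shift. Crucially, all ``high'' intervals $[1, T_q]$ share the common left endpoint $1$. I would then finish by a short case analysis on $\{a_q\}$. If some $a_q \geq 2$ or $a_q \leq -3$, or if both $\{q : a_q = 1\}$ and $\{q : a_q = -2\}$ are nonempty, then at every $t$ the range of $\{s_q^{(t)}\}_q$ is at least $2$ and so $B = \emptyset$. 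Otherwise sortedness forces every $s_q^{(t)}$ into one of the two-element windows $\{0, 1\}$ or $\{-1, 0\}$, and unwinding this yields threshold conditions of the form $t > \max_{q \in A} T_q$ together with $t \leq \min_{q \in A'} T_q$ for appropriate index sets $A, A'$; by the nesting of the high intervals, these cut out exactly one cyclic arc of times.

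The most delicate sub-case is when all $a_q \in \{-1, 0\}$: both windows can then contribute, and $B$ is a priori the union of two linear intervals, $[0, \min_{q:\,a_q = -1} T_q]$ and $[\max_{q:\,a_q = 0} T_q + 1, n-1]$, which sit on opposite sides of the cut between $n-1$ and $0$ and glue cyclically into the single arc from $\max_{q:\,a_q = 0} T_q + 1$ to $\min_{q:\,a_q = -1} T_q$. I expect this gluing step, together with checking that the edge cases $\{q:a_q = 0\} = \emptyset$ or $\{q:a_q = -1\} = \emptyset$ are vacuous, to be the main obstacle. The pathological possibility that $B$ covers the whole circuit would force $W$ to be sorted with every $I_t$, whence $C_\omega \cup \{W\}$ would be a sorted collection of size $n+1$, contradicting Theorem~\ref{2.1}; so the hypothesis $W \notin C_\omega$ rules this out, and in every case $B$ is a (possibly empty) cyclic arc of $C_\omega$, i.e., a path in the induced subgraph.
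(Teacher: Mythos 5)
Your proposal is correct but takes a genuinely different route from the paper. The paper's proof is structural and short: it shows that for any two vertices $a, b \in B$, the interleaving characterization of the sorted triple $\{W, a, b\}$ (after a suitable cyclic rotation and possibly swapping $a,b$) forces every $d$ on the $a$-to-$b$ arc of $C_\omega$ to satisfy $a_i \leq d_i \leq b_i$, hence to interleave with $W$ as well; this gives connectedness of $B$ in the cycle, and maximality of sorted collections rules out $B = C_\omega$. Your proof instead tracks the partial sums $s_q$ dynamically: each non-wrap shift decrements exactly one $s_q$, the wrap shift increments all of them, and because the edge labels are a permutation each $s_q$ oscillates between two consecutive integers, giving the closed form $s_q^{(t)} = a_q + \mathbf{1}[1 \leq t \leq T_q]$; the shared left endpoint of all the high intervals $[1, T_q]$ is exactly what turns the threshold conditions into a single cyclic arc. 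The paper's approach buys brevity and avoids any dynamics; yours buys an explicit, computable description of $B$ as a union of time-intervals, with no WLOG reduction tailored to a chosen pair in $B$. The case analysis you flag as the main obstacle does close: writing $B = B_+ \cup B_-$ for the times at which all $s_q^{(t)}$ lie in $\{0,1\}$ or in $\{-1,0\}$, each of $B_+$, $B_-$ is already a cyclic arc by the nesting, and when both are nonempty (which forces all $a_q \in \{-1, 0\}$) they share $t = 0$ and so glue into a single arc; the edge cases $\{q : a_q = -1\} = \emptyset$ and $\{q : a_q = 0\} = \emptyset$ are vacuous because they would force $v^{(0)} = W$ or $v^{(1)} = W$ respectively, contradicting $W \notin C_\omega$.
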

As an example, consider the circuit $C_{\omega}$ in Figure~\ref{circuit38}, and let $W=(3,5,6)$. Among the vertices of $C_{\omega}$,
$W$ is sorted with $\{1,3,5\}, \{1,4,5\}, \{1,4,6\}$, which indeed form a path.
\begin{proof}
If $W$ is sorted with exactly 0 or one elements in $C_\omega$ then the statement is clear. Hence assume that $W=\{c_1,\ldots,c_k\}$ is
sorted with two vertices in $C_\omega$: $a=\{a_1,a_2,\ldots,a_k\}$ and $b=\{b_1,b_2,\ldots,b_k\}$.
Since the collection $\{a,b,W\}$ is sorted then by possibly rotating the circle $\{1,2 \ldots, n\}$ and switching the roles
of $a$ and $b$ we can assume WLOG that $$c_1 \leq a_1 \leq b_1 \leq c_2 \leq a_2 \leq b_2 \leq \ldots \leq c_k \leq a_k \leq b_k.$$
We will show that every element in the path from $a$ to $b$ is sorted with $W$. Let
$d=\{d_1,d_2,\ldots,d_k\}$ be an element in this path. Then by the definition of minimal circuit,
$$a_1 \leq d_1 \leq b_1 \leq a_2 \leq d_2 \leq b_2, \ldots,\leq a_k \leq d_k \leq b_k.$$ Therefore,
$$c_1 \leq d_1 \leq c_2 \leq d_2 \leq \ldots \leq c_k \leq d_k$$ and we are done.
\end{proof}

We are now ready to present the proof of Theorem~\ref{finaltheorem}.
\begin{proof}
Suppose that there exists an oriented Young subgraph $H$ of $G_{k,n}$ such that $V \subset \J$ and
$W$ is the origin vertex of $H$. In such a case, if we denote by $s$ the swapping distance of $W$ from $V$, then by
Lemma~\ref{boundcubic} $cube_d(\J,W)\leq s$. On the other hand, Corollary~\ref{corzagdef} implies that
$W$ is $(\geq s+1,\J)-$largest minor. Therefore in particular $W$ is $(\geq cube_d(\J,W)+1,\J)-$largest minor, which is
exactly the statement of Conjecture~\ref{conjecture1}. Hence our purpose in this proof is to construct such $H$.
Denote by $C_{\J}$ the minimal circuit in $G_{k,n}$ that corresponds to the set $\J$, and by
$\omega_{\J}$ the permutation that is associated with $C_{\J}$.
As we mentioned in the proof of Lemma~\ref{boundcubic}, $H$ will actually provide us a minimal circuit $C_H$ in $G_{k,n}$ that contains
$W$ (see also Figure~\ref{cubegraph}, and the discussion regarding this figure following Lemma~\ref{zigzaglemmaone}).
Thus, in order to find such subgraph $H$ it is enough to find the permutation $\omega_H$ which corresponds
to the minimal circuit $C_H$, and to show that the part on which
$C_{\J}$ and $C_H$ differ induces a structure of an oriented Young subgraph.
For example, in Figures~\ref{circuit38} and~\ref{cubegraph}, if $W=(3,5,6)$ then we have $\omega_\J=61782435$, $\omega_H=12467835$, so the part
on which $C_\J$ and $C_H$ differ corresponds to the graph $Q_3$ depicted in Figure~\ref{cubegraph}.\\

We will first give a description of $C_H$, and then prove that it satisfies the requirements. Since $W$ is sorted with
at least one vertex in $C_\J$, then by Lemma~\ref{lemmpath} there exist vertices
$A=\{a_1,\ldots,a_k\}$ and $B=\{b_1,\ldots,b_k\}$ in $C_\J$ such that $W$ is sorted
with all the vertices in the path $B \rightarrow A$ (including the endpoints), and not sorted with all
the vertices in the path $A \rightarrow B$ (excluding the endpoints). We also allow the possibility $A=B$
(in which case $W$ is sorted with exactly one element in $C_\J$. Note that $W$ cannot be sorted with all the elements in
$C_\J$ since $\J$ is maximal). Since $A$ and $B$  are sorted, then by appropriate rotation of the circle
$\{1,2,\ldots,n\}$ we can assume that
\begin{equation}\label{eqab}
a_1 \leq b_1 \leq a_2 \leq b_2 \ldots.... \leq a_k \leq b_k.
\end{equation}
So if $A=\{1,4,6\}$ and $B=\{1,3,5\}$ as in Figure~\ref{circuit38}, then using the order\\
$6<1<2<3<4<5$ we have $6 \leq 1 \leq 1 \leq 3 \leq 4 \leq 5$, and we "redefine" $A$ to be $A=\{6,1,4\}$. In the case $A=B$ we
set $B=\{a_2,a_3,\ldots,a_k,a_1\}$. Let $W=\{d_1,\ldots,d_k\}$ such that
$d_1<d_2<\ldots<d_k$ in the order $$a_1<a_1+1\ldots < n <1 <2 <\ldots a_1-1.$$ We claim that the numbers $\{a_i,b_i,d_i\}_{i=1}^k$ satisfy inequality (\ref{eqacb}) below. We will first show how to use this inequality in order to construct $C_H$, and then in the last paragraph of the proof we will prove this inequality.
\begin{equation}\label{eqacb}
a_1 \leq d_1 \leq b_1 \leq a_2 \leq d_2 \leq b_2 \ldots.... \leq a_k \leq d_k \leq b_k
\end{equation}

Denote the path from $A$ to $B$ in $C_{\J}$ by $Q$, and let $\widehat{\omega}=\omega_1\omega_2\ldots\omega_m$
be the partial permutation that corresponds to $Q$. In particular, $\widehat{\omega}$ is a contiguous part of
$\omega_{\J}=\omega_1\omega_2\ldots\omega_m\omega_{m+1}\ldots\omega_{n}$  (for example, in Figure~\ref{circuit38},
if $W=(3,5,6)$ then\\ $\widehat{\omega}=617824$).Since (\ref{eqab}) holds, then for every $1 \leq i \leq k$, the
"1" in the position $a_i$ in $\epsilon_A$ is shifted along $Q$ to the "1" in the position $b_i$ in $\epsilon_B$.
Define\\ $A_i=\{a_i,a_i+1,\ldots,b_i-1\}$ for all $1 \leq i \leq k$ (where $n+1$ is identified with 1), and note that $A_i=\emptyset$ iff $a_i=b_i$. Then the set
of numbers that appear in $\widehat{\omega}$ is, in fact, $\cup_{i=1}^k A_i$. We would like to use now property
(\ref{eqacb}): For every $1 \leq i \leq k$ define $${D_i}^1=\{a_i,a_i+1,\ldots,d_i-1\},
 {D_i}^2=\{d_i,d_i+1,\ldots,b_i-1\}$$ (this is well defined since $a_i \leq d_i \leq b_i$).
We define $\omega_H$ as follows: Its first part consists of the numbers from $\cup_{i=1}^k {D_i}^1$, placed
according to the order in which they appear in $\widehat{\omega}$. Its second part consists of
the numbers from $\cup_{i=1}^k {D_i}^2$, again placed according to the order in which they appear in $\widehat{\omega}$.
Finally we place $\omega_{m+1}\ldots\omega_{n}$. To make this definition more clear, consider the circuit in
Figure~\ref{circuit38}
and let\\ $W=\{3,5,6\}$. Then $\widehat{\omega}=617824$, $A=\{6,1,4\}, B=\{1,3,5\}$, and we rotate the elements in
$W$ so that $W=\{6,3,5\}$. We have:
$$A_1=\{6,7,8\},  A_2=\{1,2\},  A_3=\{4\},$$  $${D_1}^1=\emptyset,  {D_1}^2=\{6,7,8\},
{D_2}^1=\{1,2\},  {D_2}^2=\emptyset,  {D_3}^1=\{4\},  {D_3}^2=\emptyset.  $$
Therefore, $\cup_{i=1}^k {D_i}^1=\{1,2,4\}, \cup_{i=1}^k {D_i}^2=\{6,7,8\}$. Therefore $\omega_H=12467835$,
and indeed $C_H$ contains $W$ as is shown in the graph $Q_3$ in Figure~\ref{cubegraph}.\\

Let us now describe the inner and outer boundary paths of $H$. We set\\ $v_0=A, v_1=B, u=W$.
The inner boundary path consists of two sections: horizontal and vertical. For the horizontal
section we place horizontal edges, labeled by the numbers appearing in the first part of $\omega_H$
(according to the order in which
they appear in $\omega_H$. Note that the last vertex in the horizontal section is $W$).
For the vertical section we place vertical edges that are labeled by the numbers appearing in the second part of $\omega_H$.
Note that the definition of the $D_i$'s and the fact that $\C_{\J}$ is a circuit in $G_{k,n}$ implies that the inner boundary
path described above is indeed a subgraph of $G_{k,n}$. For the outer boundary path, consider the edges of $C_{\J}$
that are labeled by the numbers in $\widehat{\omega}$. Every such number appears in exactly one of
$\cup_{i=1}^k {D_i}^1$ and $\cup_{i=1}^k {D_i}^2$. Every edge that corresponds to the former set will be horizontal, and
every edge that corresponds to the latter set will be vertical (see the graph $Q_3$ in Figure~\ref{cubegraph} for an example).
Note that since $C_{\J}$ is a subgraph of $G_{k,n}$, then the outer boundary path is a subgraph of $G_{k,n}$ as well.
In addition, the inner and the outer boundary paths have the same number of vertical and horizontal edges.
Now, in order to show that the inner and the outer boundary paths described above induce a structure of oriented
Young graph, we need to show that the following holds:
\begin{enumerate}
  \item The first and the last edges in the outer boundary path are vertical and horizontal respectively.
  \item Once we establish the property above, we already know that the boundary of $H$ looks like the left part
  of Figure~\ref{gridiagram}. Let us now add internal horizontal and vertical edges
  (see the right part of Figures~\ref{gridiagram} and~\ref{cubegraph}),
  such that each horizontal edge is directed from left to right, and
  each vertical edge is directed from bottom to top.
  We label each horizontal edge by the same labeling as the horizontal edge from below
  in the inner path, and we label each vertical edge by the same labeling as the vertical edge from right
  in the inner path. The resulting graph is an oriented Young graph,
  and we need to show that this graph is a subgraph of $G_{k,n}$
  (we assume for now that $A \neq B$, and deal with the case $A=B$ later).
\end{enumerate}
\begin{figure}[h]
\centering
\includegraphics[height=1in]{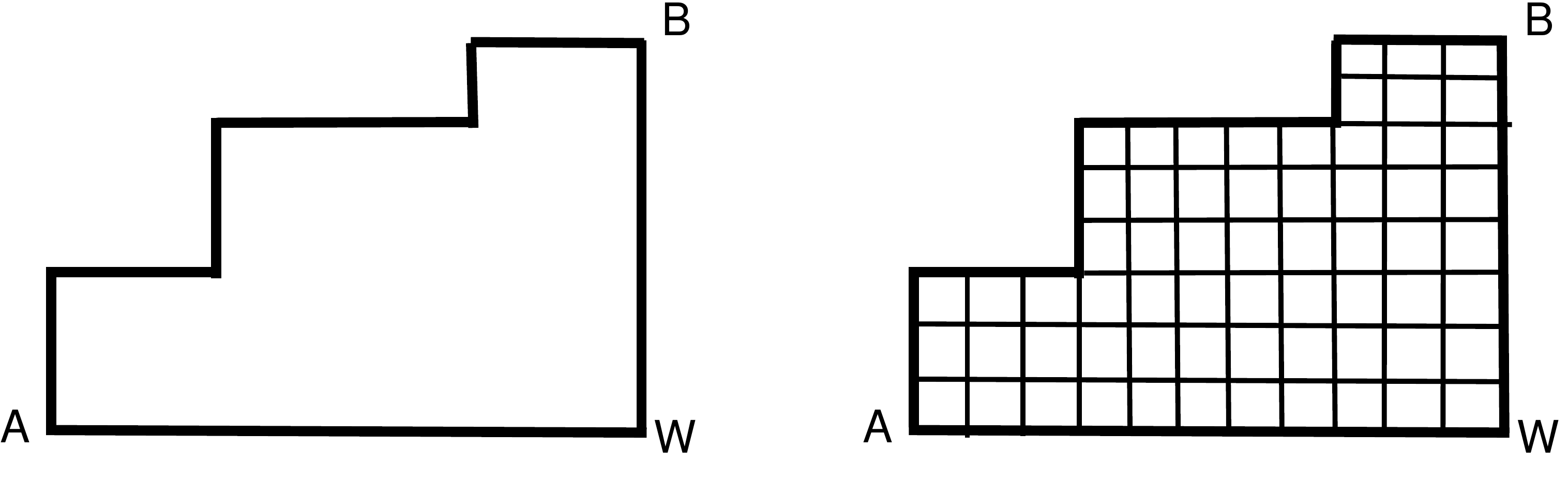}
\caption{}
\label{gridiagram}
\end{figure}
We start with (1). Assume for contradiction that the first edge is horizontal, and denote by $Z$ its other
vertex. Then $$Z=\{a_1,a_2,\ldots,a_{i-1},a_i+1,a_{i+1},\ldots,a_k\}$$ such that $a_i \in {D_i}^1$, which implies $a_i \leq d_i-1$.
Therefore, from (\ref{eqacb}) we have
$$a_1 \leq d_1 \leq a_2 \leq d_2 \ldots.... \leq a_{i-1} \leq d_{i-1} \leq a_{i}+1 \leq d_{i} \leq a_{i+1} \leq d_{i+1} \ldots \leq a_k \leq d_k.$$
This implies that $W$ is sorted with $Z$, and thus contradicts the fact that $W$ is not sorted with all
the vertices in the path $A \rightarrow B$ (excluding the endpoints) in $C_\J$.
We can similarly show that the last edge is
horizontal, so property (1) is established. We will prove property (2) by induction on the length of the
first part of $\omega_H$. If its length equals 1, then there exists an arc from $A$ to $W$
labeled by $a_j$ for some $1 \leq j \leq k$. Property (1) implies that
$\widehat{\omega}=\omega_1\omega_2\ldots\omega_{m-1}a_j$, and the situation is depicted in the left part of Figure~\ref{baseinduction}.
\begin{figure}[h]
\centering
\includegraphics[height=2in]{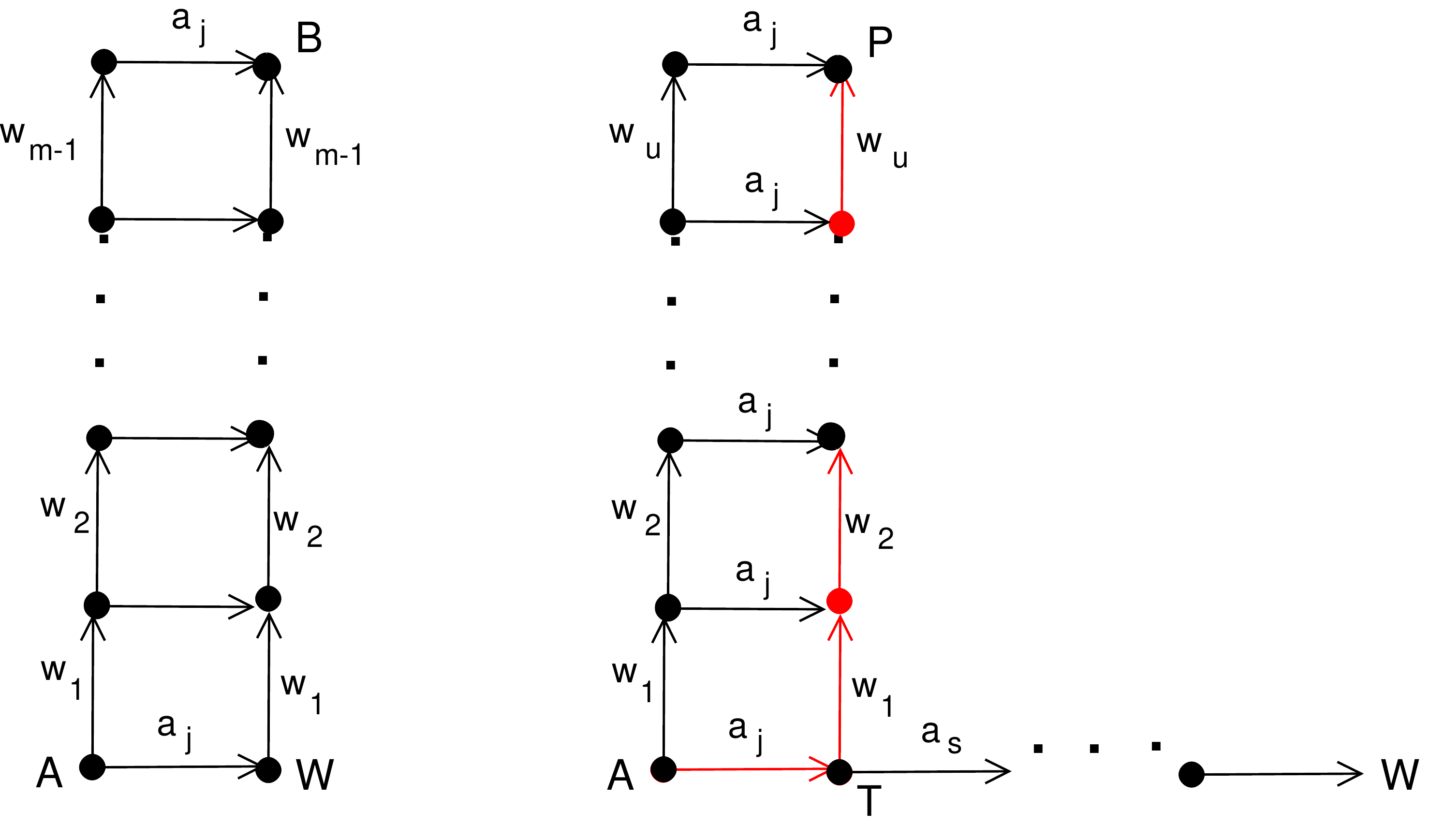}
\caption{}
\label{baseinduction}
\end{figure}
We need to show that by labeling every horizontal edge with $a_j$ we get a subgraph of $G_{k,n}$. Since $\widehat{\omega}$
is part of a permutation, $a_j \notin \{\omega_1,\omega_2,\ldots,\omega_{m-1}\}$. In addition we also have
$a_j+1 \notin \{\omega_1,\omega_2,\ldots,\omega_{m-1}\}$ (otherwise $a_j+1 \in A$, which contradicts the existence of the arc
from $A$ to $W$). Thus the base case is proven. Now assume that the length of the first part of $\omega_H$ equals $r>1$,
so the vertex that follows $A$ in the inner boundary path is of the form
$$T=\{a_1,a_2,\ldots,a_{j-1},a_j+1,a_{j+1},\ldots,a_k\}.$$ Then $\widehat{\omega}$ is of the form
$\widehat{\omega}=\omega_1\ldots\omega_{u}a_j\omega_{u+2}\ldots\omega_{m}$, and applying
the base case of the induction, we get the situation depicted in the right part of Figure~\ref{baseinduction}, where
$a_s \in \{\omega_{u+2},\ldots,\omega_{m}\}$ for all $s \in \{1,2,\ldots,k\}$ such that $s \neq j$. Now consider the minimal circuit $O$ that starts in $A$, continues along the red
path in the right part of Figure~\ref{baseinduction}, and then continues in the same way as $C_{\J}$.
The outer boundary path that corresponds to this circuit is associated with the following part of the permutation:
\begin{equation}\label{eqperm}
\omega_1\ldots\omega_{u}\omega_{u+2}\ldots\omega_{m}.
\end{equation}
The length of the corresponding first part is smaller than $r$, so we can use the inductive hypothesis and
construct the rest of the graph. To complete the proof we just need to verify that the initial vertical segment of the path
that corresponds to $O$ and starts in $A$ has at least $u$ edges. This follows from (\ref{eqperm}), so the case $A\neq B$ is done.
Now consider the case $A=B$. Recall that we order the elements in $B$ as follows: $B=\{a_2,a_3,\ldots,a_k,a_1\}$. Applying the
inductive process described above still leads us to an oriented Young graph. This graph is not a subgraph of $G_{k,n}$ (since
we duplicated one of its vertices, so we flatten the circuit), but we can still apply the reasoning from the beginning of
the proof and get the asserted claim.

The last paragraph of the proof will be dedicated to proving equation (\ref{eqacb}). If $A=B$ then this is trivial, so assume that
$A \neq B$. Denote the path $B \rightarrow A$ in $C_\J$
by $$P:= B \rightarrow T_1 \rightarrow T_2 \rightarrow \ldots \rightarrow T_r \rightarrow A$$ (so
it has $r+2$ vertices for some
$r \geq 0$). Since $W$ is sorted with all the elements in $P$, there exists a minimal circuit $C_{\pi}$ in $G_{k,n}$
that contains $W$ and all the vertices in $P$. We will show that $P$ is also a path in $C_{\pi}$.
Note that showing this will imply that $W$ is on the path from $A$ to $B$ in $C_{\pi}$, which by definition of minimal
circuits implies (\ref{eqacb}). We will start by showing that $B$ is followed by $T_1$ in $C_{\pi}$.
Since $B$ is followed by $T_1$ in $C_{\J}$, then $$T_1=\{b_1,\ldots,b_u,b_{u+1}+1,b_{u+2},\ldots,b_k\}$$ for some $u$ (the +1 is
modulo $n$). Now assume that a vertex $M \neq B$ is followed by $T_1$ in $C_{\pi}$. Then WLOG
$$M=\{b_1\ldots,b_{x-1},b_x-1,b_{x+1},\ldots,b_u,b_{u+1}+1,b_{u+2},\ldots,b_k\}.$$ Therefore,
$M$ and $B$ are not sorted, contradicting the fact that both of them are on $C_{\pi}$. We can show similarly that $T_i$ is followed
by $T_{i+1}$ for all $1 \leq i \leq r-1$ and that $T_{r+1}$ is followed by $A$, so (\ref{eqacb}) is proven.
%Let $\J\subset{[n]\choose k}$ be a maximal sorted collection, and suppose that there exists an oriented
%Young subgraph $H$ of $G_{k,n}$ such that $V \subset \J$. Let $u$ be the origin vertex in $H$.
%Then $cube_d(\J,u)$ is bounded from above by the swapping distance of $u$ from $V$.
\end{proof}

We conclude this part with the proof of Theorem~\ref{conjecturespecial}.
\begin{proof}
The case $t=2$ follows from Theorem~\ref{thm:kbynone}. Let us consider the case $k=2$, and let $W=\{a,b\}$. Since $\J$ is a maximal sorted set, there exists an element $A$ containing $a$ in $\J$, and similarly there exists an element $B$ containing $b$ in $\J$
(otherwise $\J$ would have at most $n-1$ elements). $W$ is sorted with both $A$ and $B$, so the claim
follows from Theorem~\ref{finaltheorem}. Finally, consider the case $t=3$. It is easy to verify the claim for $n \leq 5$,
so we assume $n \geq 6$. Using Claim~\ref{claimcubical} we obtain 8 cases
listed in Figure~\ref{cubical3}. In all the cases, the dotted lines represent the circuit that corresponds to $\J$
(the first and last end points might be the same point, similarly to the case $A=B$ in the proof of Theorem ~\ref{finaltheorem}),
and the red, blue and black edges correspond to vertices of cubical distance 1, 2 and 3 respectively. For the first 6 cases,
the claim follows from Lemma~\ref{zigzaglemmaone}, therefore we only need to consider the bottom 2 cases, starting from the left case
(labeled by 1). It is easy to verify that the labelings of the vertices are the one depicted in the figure
(there might be additional numbers, but they are common to all of the vertices so we can ignore them. Also, we have no assumption
on the order of $a,b,c$). Since $c \neq b+1$
(otherwise $W$ would not exist), let $Q$ be the point obtained by a detour $B \rightarrow Q \rightarrow D$ (such that the
edges are labeled by $c$ and $b$ respectively). Then $Q=\{a+1,b,c+1\}$. Now we can make the detour
$G \rightarrow W \rightarrow Q$ whose edges are labeled by $c$ and $a$ respectively.
Therefore $W$ is in fact of cubical distance at most 2 from $\J$, contradicting the assumption, so we are done with this case. Finally, consider the second case, depicted in the bottom right part of Figure~\ref{cubical3}.
Assume that $\Delta_I \leq 1$ for all $I \in {[n] \choose k}$ with equality iff $I \in \J$.
We have $\Delta_W\Delta_K<\Delta_P\Delta_{(a+1,b,c+1)}$, so $\Delta_W<\Delta_{(a+1,b,c+1)}$. In addition
$\Delta_T\Delta_{(a+1,b,c+1)}<\Delta_K\Delta_M$ which implies $\Delta_{(a+1,b,c+1)}<\Delta_M$. Therefore
$\Delta_W<\Delta_{(a+1,b,c+1)}<\Delta_M<1$, and we are done.
\end{proof}
\begin{figure}[h]
\centering
\includegraphics[height=5.5in]{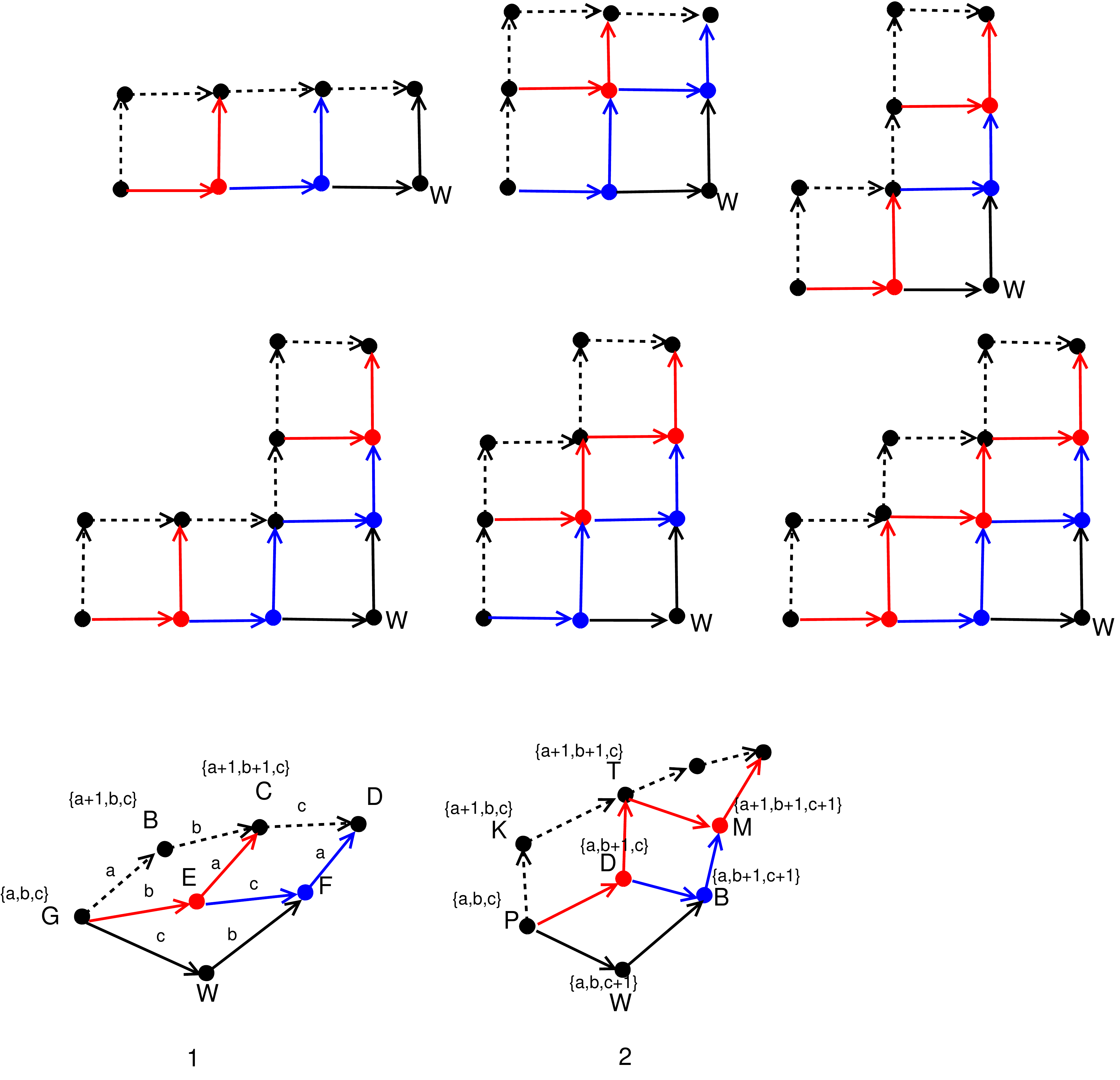}
\caption{}
\label{cubical3}
\end{figure}
Conjecture~\ref{conjecture1} deals with the case in which
$\mathcal{J}$ is maximal. We will now discuss the general case, in which $\mathcal{J}$ can be any sorted collection.
Theorem~\ref{thm:kbynone} implies that if $W \in {[n]\choose k}$ is a second largest minor, then $\epsilon_W$ is "close" to $\nabla_\J$. This notion of distance is formally defined in the following definition. This definition allows us to generalize this property for arrangements of $t^{th}$ largest minors ($t \geq 2$)
\begin{definition}\label{defdisten}
Let $r$ be an integer, $1 \leq i \leq j \leq n$, and denote by $H_{i,j,r}$ the affine hyperplane $\{x_i+x_{i+1}\cdots + x_j =r\}\subset\R^n$. Fix a point $x \in \R^n$. For $y \in \R^n$, we say that $H_{i,j,r}$ \emph{separates} $y$ from $x$ if one of the following holds:
\begin{itemize}
  \item $x$ and $y$ lie in the two disjoint halfspaces formed by $H_{i,j,r}$.
  \item $y$ lies on $H_{i,j,r}$ and $x$ does not.
\end{itemize}
Define \hspace{0.57in} $d_{ij}(x,y)=|\{r|\textrm{ the hyperplane } H_{i,j,r} \textrm{ separates } y \textrm{ from } x\}|$.\\
Finally, let \hspace{0.3in}  $B_{r}(x)=\{y\textrm{ }|\textrm{ }d_{ij}(x,y) \leq r \textrm{ for all } 1 \leq i \leq j \leq n \}$.
\end{definition}
For simplicity, we sometimes write $d_{ij}(I,J)$ instead of $d_{ij}(\epsilon_I,\epsilon_J)$. The notion $d_{ij}$ arises naturally in the discussion of sorted sets. In particular, by \cite[section 2.4]{polytopes},
$I$ and $J$ are sorted if and only if $d_{ij}(\epsilon_{I},\epsilon_{J})\leq 1$ for every $1 \leq i \leq j \leq n$.
\begin{theorem}\label{upperdistancebound}
Let $\mathcal{J}\subset{[n]\choose k}$ be some arrangement of largest minors, and let $\mathcal{Y}$ be a $(t,\mathcal{J})-$largest arrangement.
Then $\epsilon_Y \in B_{2^{t-1}}(J)$ for any $Y \in \mathcal{Y}, J \in \mathcal{J}$.
\end{theorem}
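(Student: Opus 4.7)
The plan is to prove by strong induction on $s \geq 1$ the more general statement that if $Y \in \mathcal{U}_{l-s+1}$, then $d_{ij}(\epsilon_Y, \epsilon_J) \leq 2^{s-1}$ for every $1 \leq i \leq j \leq n$ and every $J \in \mathcal{J}$. Specializing to $s = t$ recovers the theorem, since $Y \in \mathcal{Y} = \mathcal{U}_{l-t+1}$. Two preliminary observations will drive the argument. First, for $0/1$-indicator vectors, Definition~\ref{defdisten} specializes to $d_{ij}(\epsilon_I, \epsilon_J) = |I_{ij} - J_{ij}|$, since the only separating hyperplanes in that case are those with integer heights strictly between $I_{ij}$ and $J_{ij}$ together with the one passing through the endpoint $J_{ij}$. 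Second, the sort operation merely rearranges the multiset $I \cup J$, so $Y_{ij} + J_{ij} = \mathrm{Sort}_1(Y,J)_{ij} + \mathrm{Sort}_2(Y,J)_{ij}$ for every $i \leq j$.

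The base case $s = 1$ is immediate because $Y \in \mathcal{J}$ and $\mathcal{J}$ is sorted by Theorem~\ref{thm:sorted}, giving $d_{ij}(\epsilon_Y, \epsilon_J) \leq 1 = 2^0$. For the inductive step, fix $Y$ at level $s \geq 2$ and any $J \in \mathcal{J}$. If the pair $\{Y, J\}$ is sorted then $d_{ij}(\epsilon_Y, \epsilon_J) \leq 1 \leq 2^{s-1}$ and we are done. Otherwise, set $Y' = \mathrm{Sort}_1(Y, J)$ and $Y'' = \mathrm{Sort}_2(Y, J)$. Evaluating at any point of the nonempty stratum $S_\mathcal{U}$, Corollary~\ref{cor:Skandera} yields $\Delta_{Y'}\Delta_{Y''} > \Delta_Y\Delta_J$, while the extremality of $\mathcal{J}$ gives $\Delta_{Y'}, \Delta_{Y''} \leq \Delta_J$. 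Together these force $\Delta_{Y'} > \Delta_Y$ and $\Delta_{Y''} > \Delta_Y$: indeed, if one had $\Delta_{Y'} \leq \Delta_Y$, the Skandera inequality would yield $\Delta_{Y''} > \Delta_J$, a contradiction. Thus $Y'$ and $Y''$ lie in strata $\mathcal{U}_{l-s'+1}$ with $s' < s$, and the inductive hypothesis applies to give $d_{ij}(\epsilon_{Y'}, \epsilon_J) \leq 2^{s-2}$ and $d_{ij}(\epsilon_{Y''}, \epsilon_J) \leq 2^{s-2}$.

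To close the induction I would combine the multiset identity, which rewrites $Y_{ij} - J_{ij} = (Y'_{ij} - J_{ij}) + (Y''_{ij} - J_{ij})$, with the triangle inequality to conclude
\[
d_{ij}(\epsilon_Y, \epsilon_J) = |Y_{ij} - J_{ij}| \leq |Y'_{ij} - J_{ij}| + |Y''_{ij} - J_{ij}| \leq 2^{s-2} + 2^{s-2} = 2^{s-1}.
\]
The delicate checkpoint, and the only step with content beyond bookkeeping, is the deduction of both strict inequalities $\Delta_{Y'} > \Delta_Y$ and $\Delta_{Y''} > \Delta_Y$ from a single inequality between two products; this uses crucially the extremal role played by $\mathcal{J}$ together with the bound $\Delta_{Y'}, \Delta_{Y''} \leq \Delta_J$, and is exactly what makes the exponent $2^{s-1}$ both correct and sharp for the induction to propagate.
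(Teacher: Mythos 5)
Your proof is correct and rests on the same two pillars as the paper's: Skandera's inequality (Corollary~\ref{cor:Skandera}), and the fact that sorting preserves the partial sums $x_i + \cdots + x_j$ (your multiset identity $Y_{ij}+J_{ij}=Y'_{ij}+Y''_{ij}$ is exactly the content of the paper's Claim~\ref{claim:onplane}); both run a strong induction on the level of the arrangement. Where you differ is in the organization of the inductive step. The paper argues by contradiction: assuming $d_{ij}(\epsilon_Y,\epsilon_J) > 2^{t-1}$, it uses the midpoint property to show that at least one of the two sort halves of $\{Y,J\}$ is at distance more than $2^{t-2}$ from $J$, hence sits at too deep a level, forcing $Y$ even deeper. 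You argue directly: from the single product inequality $\Delta_{Y'}\Delta_{Y''}>\Delta_Y\Delta_J$ together with $\Delta_{Y'},\Delta_{Y''}\leq \Delta_J$ you deduce that \emph{both} sort halves have strictly larger Pl\"ucker value than $Y$, hence both lie at levels $<s$, so the inductive hypothesis bounds both $|Y'_{ij}-J_{ij}|$ and $|Y''_{ij}-J_{ij}|$ by $2^{s-2}$, and the triangle inequality applied to $Y_{ij}-J_{ij}=(Y'_{ij}-J_{ij})+(Y''_{ij}-J_{ij})$ closes the induction. Your base case $s=1$ (immediate since $\mathcal{J}$ is sorted) is also lighter than the paper's base case $t=2$, which invokes Theorem~\ref{thm:kbynone}. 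Same mechanism and same bound; yours is a clean streamlining that avoids both the proof by contradiction and the appeal to the structural second-largest-minor theorem.
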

In order to prove this theorem, we will use the following claim which follows from the proof of Lemma 8.6 from\cite{main}. Here, and in the proof, we denote by $I',J'$ the sets $Sort_1(I,J),Sort_2(I,J)$.
\begin{claim}\label{claim:onplane}
Suppose that $\epsilon_I$ lies on $H_{i,j,\alpha}$ and $\epsilon_J$ lies on $H_{i,j,\beta}$. Then $\epsilon_{I'}$ and $\epsilon_{J'}$ lie on $H_{i,j, \lfloor \frac{\alpha+\beta}{2} \rfloor}$ and
$H_{i,j, \lceil \frac{\alpha+\beta}{2} \rceil}$ (not necessarily respectively).
\end{claim}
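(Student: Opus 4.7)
My plan is to unpack the claim in purely combinatorial terms and reduce it to a parity count on a window of consecutive indices in the sorted multiset $I\cup J$.

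First, I would translate the hypothesis: the condition $\epsilon_I\in H_{i,j,\alpha}$ is just $|I\cap[i,j]|=\alpha$, and similarly $|J\cap[i,j]|=\beta$. The goal is then to show $\{|I'\cap[i,j]|,\,|J'\cap[i,j]|\}=\{\lfloor(\alpha+\beta)/2\rfloor,\lceil(\alpha+\beta)/2\rceil\}$, which already forces $\epsilon_{I'}\in H_{i,j,\lfloor(\alpha+\beta)/2\rfloor}$ and $\epsilon_{J'}\in H_{i,j,\lceil(\alpha+\beta)/2\rceil}$ in some order.

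Next, the central observation: write $\mathrm{Sort}(I\cup J)=(a_1,a_2,\dots,a_{2k})$ as a (multiset) non-decreasing sequence. Because this sequence is monotone, the elements falling in the interval $[i,j]$ form a \emph{contiguous block} $a_p,a_{p+1},\dots,a_{p+\alpha+\beta-1}$ for some starting index $p$ (we enter $[i,j]$ at position $p$ and leave once we exceed $j$). The total count in this block is $|I\cap[i,j]|+|J\cap[i,j]|=\alpha+\beta$, matching the window length.

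Now recall $I'=\{a_1,a_3,\dots,a_{2k-1}\}$ and $J'=\{a_2,a_4,\dots,a_{2k}\}$, so $|I'\cap[i,j]|$ equals the number of odd indices in $\{p,p+1,\dots,p+\alpha+\beta-1\}$, and $|J'\cap[i,j]|$ equals the number of even indices. In a window of consecutive integers of length $\alpha+\beta$, the counts of odd and even indices are exactly $\lceil(\alpha+\beta)/2\rceil$ and $\lfloor(\alpha+\beta)/2\rfloor$ (which one is which depends on the parity of $p$). This gives the desired pair of values and concludes the proof.

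There is no real obstacle beyond being careful with the multiset convention: if $I\cap J\neq\emptyset$, shared elements appear twice in $\mathrm{Sort}(I\cup J)$, so I need to verify that the "contiguous block" claim still holds at the boundaries $i$ and $j$, which it does because the sorted sequence is weakly increasing. Once the block structure is in place, the entire claim is the trivial parity fact that any window of $\alpha+\beta$ consecutive integers contains $\lfloor(\alpha+\beta)/2\rfloor$ of one parity and $\lceil(\alpha+\beta)/2\rceil$ of the other.
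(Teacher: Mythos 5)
Your argument is correct and complete. The translation of $\epsilon_I\in H_{i,j,\alpha}$ into $|I\cap[i,j]|=\alpha$ is exactly right, the monotonicity of $\mathrm{Sort}(I\cup J)$ does make the positions indexing elements of $[i,j]$ a contiguous window of length $\alpha+\beta$ (with a value shared by $I$ and $J$ correctly counted twice), and the odd/even parity count in a window of $\alpha+\beta$ consecutive positions gives precisely $\bigl\{\lfloor(\alpha+\beta)/2\rfloor,\lceil(\alpha+\beta)/2\rceil\bigr\}$ for $\bigl\{|I'\cap[i,j]|,|J'\cap[i,j]|\bigr\}$. The multiset worry you raise is handled correctly: since each of $I,J$ is a set, a repeated value occurs exactly twice and the two copies are adjacent in the sorted sequence, so one lands in an odd slot and one in an even slot; hence the odd-position entries are distinct and counting odd indices really does count $|I'\cap[i,j]|$. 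The comparison with the paper is a bit one-sided, because the paper gives no proof at all: it derives the claim by citing the proof of Lemma 8.6 of the earlier paper on arrangements of equal minors, and the same identity is asserted ``by definition'' (in the form $\{Sort_1(I,J)_{ij},Sort_2(I,J)_{ij}\}=\{\lfloor(I_{ij}+J_{ij})/2\rfloor,\lceil(I_{ij}+J_{ij})/2\rceil\}$) at the start of the proof of Lemma 4.3. So what you have written is the elementary, self-contained argument that the paper leaves implicit, and it is essentially the canonical one; it buys independence from the external reference at the cost of a short paragraph.
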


Let us now present the proof of Theorem~\ref{upperdistancebound}.
\begin{proof}
Fix some pair $1 \leq i \leq j \leq n$, $Y \in \mathcal{Y}, J \in \mathcal{J}$. We will prove the theorem by induction on $t$, starting with the case $t=2$.
If $Y$ and $J$ are sorted then $d_{ij}(Y,J) \leq 1$. If they are not sorted then by Theorem~\ref{thm:kbynone} $Y$ is sorted with some element $N$ such that $N$ is sorted with $J$. Therefore $d_{ij}(Y,J)\leq d_{ij}(Y,N)+d_{ij}(N,J) \leq 2$ so the claim is proven. Assume now that $t>2$, and that the claim is proven
for all the numbers up to $t-1$. Supposefor contradiction that $d_{ij}(Y,J)>2^{t-1}$ so $\epsilon_Y$ lies on $H_{i,j,\alpha}$
and $\epsilon_J$ lies on $H_{i,j,\beta}$ for some pair of numbers $\alpha,\beta$ that satisfy $|\alpha-\beta| > 2^{t-1}$. Hence using Skandera's inequalities we get
$\Delta_Y\Delta_J < \Delta_{Y'}\Delta_{J'}$. Assume WLOG that the maximal minors equal 1. Therefore, since $J \in \mathcal{J}$ we get
$\Delta_Y< \Delta_Y'\Delta_J'$. Recall that by Claim~\ref{claim:onplane} at least one of $Y',J'$ lies on $H_{i,j, \lceil \frac{\alpha+\beta}{2} \rceil}$, and assume that $\alpha>\beta$ (the other case can
be handled similarly). Then $\frac{\alpha+\beta}{2} > \frac{\beta+2^{t-1}+\beta}{2}=\beta+2^{t-2}$. Therefore
at least one of $d_{ij}(J,Y'), d_{ij}(J,J')$ is bigger than $2^{t-2}$, which by the inductive hypothesis implies that at least
one of $Y', J'$ is not a $(t-1,\mathcal{J})-$largest minor. Now, since we assumed that 1 is the largest minor, then
$\Delta_Y< \Delta_{J'}$ and $\Delta_Y< \Delta_{Y'}$, and hence $Y$ is not a $(t,\mathcal{J})-$largest minor, a contradiction.
\end{proof}
Thus, we get that if $W$ is a $(t,\mathcal{J})-$largest minor, then $W$ must lie within a ball of certain bounded radius around $\mathcal{J}$. We conclude this section with the following corollary.
\begin{corollary}
Let $\mathcal{Y}$ be an arrangement of $t^{th}$ largest minors, $t \geq 2$. Then all the elements $\epsilon_Y$, $Y\in \mathcal{Y}$ lie within a ball of radius $2^{t-1}$.
\end{corollary}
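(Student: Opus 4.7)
The plan is to derive this corollary as an essentially immediate consequence of Theorem~\ref{upperdistancebound}. The only substantive step is to translate the hypothesis ``$\mathcal{Y}$ is an arrangement of $t^{th}$ largest minors'' into the stronger-looking form ``$\mathcal{Y}$ is a $(t,\mathcal{J})$-largest arrangement for some specific $\mathcal{J}$''.

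First I would unpack the definitions. By definition, since $\mathcal{Y}$ is an arrangement of $t^{th}$ largest minors, there exists a nonempty stratum $S_{\U}$ with $\U_0 = \emptyset$ and $\U_{l-t+1} = \mathcal{Y}$. In particular $l \geq t-1 \geq 1$, so $\U_l$ is itself a nonempty set of indices, and moreover $\U_l$ is an arrangement of largest minors for this stratum. Setting $\mathcal{J} := \U_l$, the definition of a $(t,\mathcal{J})$-largest arrangement is then satisfied verbatim: the same stratum $S_{\U}$ witnesses that $\mathcal{Y}$ is a $(t,\mathcal{J})$-largest arrangement.

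Next I would simply fix any element $J_0 \in \mathcal{J}$ and invoke Theorem~\ref{upperdistancebound} with this choice. The theorem asserts that $\epsilon_Y \in B_{2^{t-1}}(\epsilon_{J_0})$ for every $Y \in \mathcal{Y}$. Hence the single ball $B_{2^{t-1}}(\epsilon_{J_0})$ of radius $2^{t-1}$ contains all of the vectors $\epsilon_Y$ with $Y \in \mathcal{Y}$, which is exactly the statement of the corollary.

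There is no real obstacle to overcome here — the entire content of the result is packaged inside Theorem~\ref{upperdistancebound}, and the corollary is just the observation that one may extract a single common center by fixing an arbitrary element of $\mathcal{J}$. The only thing to double-check during the write-up is that $\mathcal{J}$ is indeed nonempty (so that such a $J_0$ exists), which is immediate from $\U_l$ being the top block of a nonempty ordered set-partition of $\binom{[n]}{k}$.
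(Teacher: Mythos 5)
Your proposal is correct and is essentially the (implicit) argument the paper intends: the corollary is stated without proof precisely because it is this immediate unpacking of Theorem~\ref{upperdistancebound}. Extracting $\mathcal{J}=\U_l$ from the witnessing stratum, observing that the same stratum makes $\mathcal{Y}$ a $(t,\mathcal{J})$-largest arrangement, and then fixing any $J_0\in\mathcal{J}$ as the common center is exactly the right reasoning, and your remark that $\mathcal{J}$ is nonempty (since $\U_l$ is the top block of an ordered set-partition of $\binom{[n]}{k}$ and $l\geq t-1\geq 1$) is the one point worth making explicit.
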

%\begin{theorem}\label{conjecturespecial}
%Conjecture ~\ref{conjecture1} holds for $t=2,3$ (and any $n$ ,$k$), and also for $k=2$ (and any $n$, $t$).
%\end{theorem}\begin{comment}

%\begin{definition}\label{maindefinition}
%Consider a grid graph $R$ in which every horizontal edge oriented from left to right, and every vertical edge from %bottom to top. Denote by $v_0$ ($v_1$) the bottom left (top right) vertices in $R$, and choose a path that starts on %the left side of $R$ (but not at $v_0$) and ends on the top side of $R$ (but not at $v_1$). Finally, remove the part %of $R$ that is above the path. We call the resulted graph \emph{straight-shape grid graph}. The path that starts at %$v_0$, ends at $v_1$, and passes only through boundary edges along the north-west side of the straight-shape grid %graph is called the \emph{inner boundary path}. The second path that starts at $v_0$, ends at $v_1$, and passes only %through boundary edges is called the \emph{outer boundary path}. See figure~\ref{} for example

\FloatBarrier

%\acknowledgements
%\label{sec:ack}
%We would like to thank Alexander Postnikov for useful discussions.
%\nocite{*}
%\bibliographystyle{abbrvnat}
% use the following instead if you encounter problems
\bibliographystyle{alpha}
\bibliography{paperbib}
\label{sec:biblio}

\end{document}